\let\mc\mathcal
\let\euc\mathcal
\let\mathcal\mc
\let\frak\mathfrak
\def\>{\relax\ifmmode\mskip.666667\thinmuskip\relax\else\kern.111111em\fi}
\def\<{\relax\ifmmode\mskip-.333333\thinmuskip\relax\else\kern-.0555556em\fi}
\def\:{\relax\ifmmode\mskip.333333\thinmuskip\relax\else\kern.0555556em\fi}
\def\?{\relax\ifmmode\mskip-.666667\thinmuskip\relax\else\kern-.111111em\fi}
\def\vsk#1>{\vskip#1\baselineskip} 
\def\vv#1>{\vadjust{\vsk#1>}\ignorespaces}
\def\vvn#1>{\vadjust{\nobreak\vsk#1>\nobreak}\ignorespaces}
\def\vvgood{\vadjust{\penalty-500}} \let\alb\allowbreak
\def\fratop{\genfrac{}{}{0pt}1}
\def\satop#1#2{\fratop{\scriptstyle#1}{\scriptstyle#2}}
\let\dsize\displaystyle  \let\ssize\scriptstyle
\let\sssize\scriptscriptstyle 
  \let\hp\hphantom
\def\dfrac{\dsize\frac}
\let\Smallskip\smallskip
\def\smallskip{\par\Smallskip}
\let\Medskip\medskip
\def\medskip{\par\Medskip}
\let\Bigskip\bigskip
\def\bigskip{\par\Bigskip}
\let\Maketitle\maketitle
\def\maketitle{\Maketitle\thispagestyle{empty}\let\maketitle\empty}
\newtheorem{thm}{Theorem}[section]
\newtheorem{cor}[thm]{Corollary}
\newtheorem{lem}[thm]{Lemma}
\newtheorem{prop}[thm]{Proposition}
\newcommand{\dontprint}[1]\relax
\numberwithin{equation}{section}
\theoremstyle{definition}
\newtheorem*{rem}{Remark}
\newtheorem*{example}{Example}
\newtheorem*{exam}{Example \exno}
\newtheorem*{defn}{Definition}
\let\nc\newcommand
\let\al\alpha
\let\bt\beta
\let\dl\delta
\let\Dl\Delta
\let\epsi\epsilon
\let\eps\varepsilon
\let\gm\gamma
\let\Gm\Gamma
\let\ka\kappa
\let\la\lambda
\let\La\Lambda
\let\om\omega
\let\Om\Omega
\let\pho\phi
\let\phi\varphi
\let\si\sigma
\let\thi\vartheta
\let\zt\zeta
\let\der\partial
\let\Hat\widehat
\let\ox\otimes
\let\Tilde\widetilde
\let\bra\langle
\let\ket\rangle
\let\ge\geqslant
\let\geq\geqslant
\let\le\leqslant
\let\leq\leqslant
\let\on\operatorname
\let\bi\bibitem
\let\bs\boldsymbol
\let\Empty\varnothing
\def\C{{\mathbb C}}
\def\Z{{\mathbb Z}}
\def\Ne{{\euc N}}
\def\Ue{{\euc U}}
\def\LL{{\mc L}}
\def\B{{\mc B}}
\def\F{{\mc F}}
\def\Rc{{\mc R}}
\def\+#1{^{\{#1\}}}
\def\lsym#1{#1\alb\dots\relax#1\alb}
\def\lc{\lsym,}
\def\id{{\on{id}}}
\def\Re{\on{Re}}
\def\tbigoplus{\mathop{\textstyle{\bigoplus}}\limits}
\def\tbigcup{\mathop{\textstyle{\bigcup}}\limits}
\def\pti{\textit{pt}}
\def\hor{\textrm{hor}}
\def\vert{\textrm{vert}}
\def\chl{\text{\sf c.h.}}
\def\ij{i,\:j}
\def\gl{\mathfrak{gl}}
\def\gln{\mathfrak{gl}_N}
\def\Uen{\Ue(\Tilde{\gln}\<)}
\def\beq{\begin{equation}}
\def\eeq{\end{equation}}
\def\be{\begin{equation*}}
\def\ee{\end{equation*}}
\nc{\bea}{\begin{eqnarray*}}
\nc{\eea}{\end{eqnarray*}}
\nc{\bean}{\begin{eqnarray}}
\nc{\eean}{\end{eqnarray}}
\nc{\Ref}[1]{{\rm(\ref{#1})}}
\def\Eh{\Hat E}
\def\Phh{\Hat P}
\def\Qh{\Hat Q}
\def\Wh{\rlap{$\>\Hat{\?\phantom W\<}\:$}W}
\def\psih{\Hat\psi}
\def\Jt{\tilde J}
\def\Ut{\Tilde U}
\def\Wt{\rlap{$\>\Tilde{\?\phantom W\<}\:$}W}
\def\N{\mathbb{Z}_{\geq0}}
\def\R{{\mathbb R}}
\def\XX{{\mc X}}
\def\II{{\mc I}}
\def\GG{{\bs\Gm}}
\def\TT{{\bs t}}
\def\ttt{{\bs t}}
\def\xx{{\bs x}}
\def\zz{{\bs z}}
\def\tti{\tilde{\<\TT\:}}
\nc{\bla}{{\bs\la}}
\nc{\Il}{{\II_{\bla}}}
\nc{\Fla}{\F_\bla}
\nc{\tfl}{{T^*\!\Fla}}
\nc{\GL}{{GL_n(\C)}}
\nc{\GLC}{{GL_n(\C)\times\C^*}}
\def\mub{{\bs\mu}}
\def\nub{{\bs\nu}}
\def\Czh{\C[\zz^{\pm1}\?,h^{\pm1}\:]}
\def\CZS{\C[\zz^{\pm1}]^{\>S_n}}
\def\CGs{\C[\:\GG^{\pm1}\:]^{\>S_\bla}}
\def\zzz{z_1\lc z_n}
\def\Imx{I^{\:\max}}
\def\zzzsi{z_{\si(1)}\lc z_{\si(n)}}
\def\Bck{\B^{\:q}}
\def\Bckp{\B^{\:'\?q'}}
\def\ddk_#1{q_{#1}\:\frac\der{\der\:q_{#1}}}
\def\bul{\mathbin{\raise.2ex\hbox{$\sssize\bullet$}}}
\def\intt{\mathchoice
{\mathop{\raise.2ex\rlap{$\,\,\ssize\backslash$}{\intop}}\nolimits}
{\mathop{\raise.3ex\rlap{$\,\sssize\backslash$}{\intop}}\nolimits}
{\mathop{\raise.1ex\rlap{$\sssize\>\backslash$}{\intop}}\nolimits}
{\mathop{\rlap{$\sssize\:\backslash$}{\intop}}\nolimits}}
\def\GZ/{Gelfand\:-Zetlin}
\def\GZi/{Gelfand\:-\<Zetlin}
\def\KZ/{{\slshape KZ\/}}
\def\qKZ/{{\slshape qKZ\/}}
\def\XXX/{{\slshape XXX\/}}
\def\XXZ/{{\slshape XXZ\/}}
\def\Sym{\on{Sym}}
\def\St{{\on{Stab}}}
\def\Stab{\on{Stab}}
\def\Slope{\on{Slope}}
\def\Loc{\on{Loc}}
\DeclareMathOperator\pt{pt}
\DeclareMathOperator\Pic{Pic}
\let\bW\Wbar
\let\hE\Eh
\DeclareMathOperator\codim{codim}
\def\%#1{^{\bra#1\ket}}
\def\WT{\rlap{$\>\Hat{\?\phantom W\<}\:$}W} 
\begin{document}

\hrule width0pt
\vsk->

\title[Stable envelopes and Newton polytopes]
{Elliptic and $\bs K\<$-theoretic stable envelopes\\[3pt] and Newton polytopes}

\author
[R.\,Rim\'anyi, V\<.\,Tarasov, A.\:Varchenko]
{ R.\,Rim\'anyi$\>^{\star}$,
V\<.\,Tarasov$\>^\circ$, A.\:Varchenko$\>^\diamond$}

\maketitle

\begin{center}
{\it $^{\star\,\diamond}\<$Department of Mathematics, University
of North Carolina at Chapel Hill\\ Chapel Hill, NC 27599-3250, USA\/}

\vsk.5>
{\it $\kern-.4em^\circ\<$Department of Mathematical Sciences,
Indiana University\,--\>Purdue University Indianapolis\kern-.4em\\
402 North Blackford St, Indianapolis, IN 46202-3216, USA\/}

\vsk.5>
{\it $^\circ\<$St.\,Petersburg Branch of Steklov Mathematical Institute\\
Fontanka 27, St.\,Petersburg, 191023, Russia\/}
\end{center}

{\let\thefootnote\relax
\footnotetext{\vsk-.8>\noindent
$^\star\<${\sl E\>-mail}:\enspace rimanyi@email.unc.edu\>,
supported in part by NSF grant DMS-1200685\\
$^\circ\<${\sl E\>-mail}:\enspace vtarasov@iupui.edu\>, vt@pdmi.ras.ru\>,
supported in part by Simons Foundation grant 430235\\
$^\diamond\<${\sl E\>-mail}:\enspace anv@email.unc.edu\>,
supported in part by NSF grants DMS-1362924, DMS-1665239}}

\begin{abstract}
In this paper we consider the cotangent bundles of partial flag varieties.
We construct the $K\<$-theoretic stable envelopes for them and also define
a version of the elliptic stable envelopes. We expect that our elliptic stable
envelopes coincide with the elliptic stable envelopes defined by M.\,Aganagic
and A.\,Okounkov. We give formulas for the $K\<$-theoretic stable envelopes
and our elliptic
stable envelopes. We show that the $K\<$-theoretic stable envelopes are
suitable limits of our elliptic stable envelopes. That phenomenon was predicted
by M.\,Aganagic and A.\,Okounkov. Our stable envelopes are constructed in terms
of the elliptic and trigonometric weight functions which originally appeared in
the theory of integral representations of solutions of \qKZ/ equations twenty
years ago. (More precisely, the elliptic weight functions had appeared earlier
only for the $\frak{gl}_2$ case.) We prove new properties of the trigonometric
weight functions. Namely, we consider certain evaluations of the trigonometric
weight functions, which are multivariable Laurent polynomials, and show that
the Newton polytopes of the evaluations are embedded in the Newton polytopes of
the corresponding diagonal evaluations. That property implies the fact that the
trigonometric weight functions project to the $K\<$-theoretic stable envelopes.
\end{abstract}

\setcounter{footnote}{0}
\renewcommand{\thefootnote}{\arabic{footnote}}

{\small \tableofcontents }

\section{Introduction}
Malik and Okounkov started in \cite{MO1} a program to relate the quantum torus
equivariant generalized cohomology (cohomology, $K\<$-theory, elliptic
cohomology) of Nakajima varieties and representation theory of quantum groups.
A central role in that program is played by the stable envelopes, which are
maps from the equivariant generalized cohomology of the fixed point set of
the torus action to the equivariant generalized cohomology of the variety.
Stable envelopes depend on the choice of a chamber (a connected component of
the complement of an arrangement of real hyperplanes) and stable envelopes of
different chambers are related by $R$-matrices of the corresponding quantum
group.

The basic example of a Nakajima variety is the cotangent bundle of a variety of partial flags in $\C^n$. Such
a variety $\F_\bla$ is labeled by
$\bla=(\la_1,\dots,\la_N)\in\Z_{\geq 0}^N$, $|\bla|=\la_1+\dots+\la_N=n$ and consists of flags
\be
F_1\subset F_2\subset\dots\subset F_N=\C^n,
\ee
with $\dim F_i/F_{i-1}=\la_i$, $i=1,\dots,N$. The torus is
$T=(\C^\times)^{n}\times \C^\times$, with $(\C^\times)^n$ acting by diagonal matrices on
$\C^n$ and $\C^\times$ acting by multiplication on the cotangent
spaces. In this case the stable envelope map can be thought of as a linear map $\Stab_\si$ from
$(\C^N)^{\ox n}$ to the $T$-equivariant generalized cohomology of the disjoint union
$\sqcup_{\bla\in\Z_{\geq 0}^N,\, |\bla|=n }\tfl$ of cotangent bundles. The stable envelope map
depends on an element $\si$ of the symmetric group $S_n$.

Let $v_1,\dots,v_N$ be the standard basis of $\C^N$. The standard basis of $(\C^N)^{\ox n}$ consists of vectors
$v_I$ labeled by partitions $I=(I_1,\dots,I_N)$ of $\{1,\dots, n\}$,
$v_I=v_{i_1}\otimes \dots\ox v_{i_n}$,
where $i_a=j$ if $a\in I_j$. Let $\Il$ be the set of all partitions $I$ such that $|I_j|=\la_j$, $j=1,\dots,N$.
Then $\Stab_\si$ is the same as a collection of cohomology classes $(\ka_{\si\<,\>I})_{I\in\Il}$ of $\tfl$
for all $\bla\in\Z_{\geq 0}^N,\, |\bla|=n$. The classes must satisfy some remarkable defining relations.

The existence and uniqueness of the stable envelopes for the $T$-equivariant cohomology of Nakajima varieties
was established in \cite{MO1}.
The special case of the partial flag varieties see in \cite{GRTV,RTV1}, where the formulas for the stable envelopes were given.
The $K\<$-theoretic stable envelopes were defined by Maulik and Okounkov in the paper \cite{MO2}, which is in preparation.
According to \cite{O, OS}, the paper \cite{MO2} will also contain the theorem of existence and uniqueness of
the $K\<$-theoretic stable envelopes for Nakajima varieties.
One of the main results of this paper is the proof of the existence of the $K\<$-theoretic stable envelopes for cotangent bundles of
partial flag varieties and formulas for them, see Theorem \ref{thm:OisW}.

The definition of elliptic stable envelopes for Nakajima varieties was sketched
by Aganagic and Okounkov in \cite{AO}. In \cite{FRV} a version of the elliptic
stable envelopes was defined for the cotangent bundles of Grassmannians. The
second main result of this paper is the definition of a version of the elliptic
stable envelopes for cotangent bundles of partial flag varieties in the spirit
of \cite{FRV} and an axiomatic description of them in Theorem \ref{thm:ax}.
That definition coincides with the definition in \cite{FRV} for the cotangent
bundles of Grassmannians. We expect that the stable envelopes in \cite{FRV}
and this paper coincide with the corresponding elliptic stable envelopes
in \cite{AO}.

Our formulas for different versions of stable envelopes are given in terms of
the {\it weight functions}, which originally appeared in the theory of integral
representations for solutions of different versions of \qKZ/ equations and associated Bethe ansatz, see
\cite{SV,V,TV1,TV2,TV3,TV4,FTV1,FTV2, MTV}. The integral representation for a solution
$I(z_1,\dots,z_n,y)$ of the Yangian $Y_y(\gl_N)$ \qKZ/ equation
with values in $(\C^N)^{\ox n}$ has the form
\be
I(z_1,\dots,z_n,y) = \sum_I \left(\int\Phi(\ttt, z_1,\dots,z_n,y) W_I(\ttt,z_1,\dots,z_n,y)d\ttt \right) v_I,
\ee
where $\Phi(\ttt, \zz,y)$ is some scalar {\it master function} and $W_I(\ttt,\zz,y)$ are the weight functions,
$y$ is deformation parameter of the Yangian, $\ttt=(t^{(k)}_i)$ are the integration variables. To obtain a $T$-equivariant
cohomological stable envelope we identify the variables $z_1,\dots,z_n,y$ with the equivariant parameters
of the torus $T=(\C^\times)^{n}\times \C^\times$ and
the variables $\ttt$ with equivariant Chern roots of the tautological vector bundles on $\tfl$.
To construct the $K\<$-theoretic or elliptic stable envelopes we take the
weight functions appearing in other versions of the \qKZ/ equations.

Initially the weight functions were invented to construct the integral
representations for solutions of the \qKZ/ equations. The main requirement
on them was the condition that the weight functions
$(W_{\si\<,\>I}(\ttt,\zz,y))_{I\in\Il}$ and
$(W_{\si',I}(\ttt,\zz,y))_{I\in\Il}$ labeled by $\si,\si'\in S_n$ must be
related by the $R$-matrices of the corresponding quantum group, which permute
the factors in the tensor product of $n$ evaluation representations.
No stable envelope properties were expected from them. Only now, 20 years
later, when the stable envelopes were introduced, the new stable envelope
properties of the weight functions are becoming uncovered.

In this paper we study the $\frak{gl}_N$ elliptic and trigonometric weight
functions. For $N=2$, the elliptic weight functions were introduced in
\cite{FTV1, FTV2}. For $N>2$, the $\frak{gl}_N$ elliptic weight functions
did not appear previously in the literature. The $\frak{gl}_N$ trigonometric
weight functions of this paper are modifications of the trigonometric weight
functions considered in \cite{TV1, TV4}.

In this paper we consider, in particular, the
trigonometric weight functions denoted by $(\Wt^\Dl_{\si\<,\>I}(\ttt,\zz,h))_{I\in\Il}$ and their evaluations
$(\Wt^\Dl_{\si\<,\>I}(\zz_J,\zz,h))_{I\in\Il}$ labeled
by partitions $J\in\Il$. Each of these evaluations is a Laurent polynomial in $z_1,\dots,z_n$. We show that
the diagonal evaluation $\Wt^\Dl_{\si\<,\>J}(\zz_J,\zz,h)$ equals some explicit product of binomials and the Newton polytope
of any off-diagonal evaluation $\Wt^\Dl_{\si\<,\>I}(\zz_J,\zz,h)$ with $I\ne J$ can be parallelly moved inside the Newton polytope
of the corresponding diagonal evaluation $\Wt^\Dl_{\si\<,\>J}(\zz_J,\zz,h)$, see Theorem \ref{thm:Newton}. This is the stable envelope property of
the trigonometric weight functions. Theorem \ref{thm:Newton} is our third main result.

We emphasize that the weight functions are not the same object as the stable
envelopes. The weight functions are polynomials of certain variables, while
the stable envelopes are projections of the weight functions (usually divided
by some nontrivial factors) to the corresponding equivariant cohomology
algebras with relations.

Our fourth main result is the relation between our elliptic stable envelopes,
introduced in this paper, and the trigonometric stable envelopes, see
Theorem~\ref{pr4.1}. That relation between the elliptic and $K\<$-theoretic
stable envelopes was predicted in \cite[Proposition 3.5]{AO}, see a remark
at the end of Section~\ref{qto0}.
The elliptic weight functions are defined as symmetrizations of alternating
products of theta functions of one variable. When the modular parameter and the
argument of the theta function tend to zero in a special way the theta function
turns into a binomial. The corresponding limit of an elliptic weight function
turns into a trigonometric weight function. In that way the trigonometric
stable envelopes can be recovered from the elliptic stable envelopes.

The exposition is the following. In Section \ref{sec:2} we introduce the elliptic weight functions and study their properties.
Theorem \ref{thm:recur} shows that different elliptic weight functions are related by the $\gl_N$ elliptic dynamical $R$-matrices.
Theorem \ref{thm orth} gives the orthogonality relations for
the elliptic weight functions, similarly to the orthogonality relations in
\cite{TV2, TV3, RTV1, RTV2, FRV}. In Section \ref{sec:trigweight} we introduce
the trigonometric weight functions. Theorem \ref{thm:r_tri} describes
the $R$-matrix properties of the trigonometric weight functions.
In Section \ref{sec:trigweight} we formulate and prove Theorem~\ref{thm:Newton}
on Newton polytopes.

In Section \ref{Preliminaries from Geometry} we discuss elementary facts on partial flag varieties.
In Section \ref{sec:5} the $K\<$-theoretic stable envelopes of cotangent bundles of partial flag varieties are constructed.
In Section \ref{sec:6} we remind elementary facts on line bundles on powers of an elliptic curve.
In Section \ref{sec7} we define our elliptic stable envelopes. At the end of the paper we
comment on the definitions of stable envelopes in cohomology, $K\<$-theory, and elliptic cohomology.

This paper was inspired by papers and oral presentations by Andrei Okounkov
and his coauthors. Our goal was to understand the $K\<$-theoretic and elliptic
stable envelopes and their relations with weight functions. The authors thank
G.\,Felder for useful discussions and the referee for helpful suggestions.
The authors thank H.\,Konno who pointed out a mistake in the earlier version
of Theorem~\ref{thm orth}, in which the shift \,$h^\bla$ had been missing,
see that shift in our earlier papers \cite{RV1, FRV}. The third author thanks
the Max Planck Institute for Mathematics and Hausdorff Institute for
Mathematics in Bonn for hospitality.

\noindent{\em Remark on notation.}
Weight functions appear in the literature in three flavors: rational,
trigonometric, and elliptic. Their natural notation would be
$W_{\si\<,\>I}^{rat}, W^{trig}_{\si,I,\Dl}, W^{ell}_{\si\<,\>I}$.
We will use the latter for the elliptic version. For the trigonometric one,
we will write $W^\Dl_{\si\<,\>I}$ to keep the notation simpler.
There will be no rational weight functions in this paper.

\section{Elliptic weight functions}
\label{sec:2}

\subsection{Notation}
Let $\tau$ be a complex number with positive imaginary part and let
$q=e^{2\pi i \tau}$. We will use the complex variables $x$ and $u$ satisfying
$x=e^{2\pi i u}$. We set $q^{1/2}=e^{\pi i \tau}$ and $x^{1/2}=e^{\pi i u}$.
Define the theta function
\beq
\label{sqr}
\thi(x)\,=\,(x^{1/2}\?-x^{-1/2})\,\pho(qx)\>\pho(q/x)\,,\qquad
\pho(x)\,=\,\prod_{s=0}^\infty\,(1-q^s x)\,,
\vv-.3>
\eeq
cf.~\cite[(67)\:]{AO}\>. Then
\vvn-.2>
\beq
\label{one}
\frac{\thi(qx)}{\thi(x)}\,=\,-\>\frac1{q^{1/2}x}\;,\qquad
\thi(1/x)\,=\>-\,\thi(x)\,,
\vv.3>
\eeq
Let
\beq
\label{TH}
\theta(u)\,=\,\theta(u,\tau)\,=\,\thi(e^{2\pi iu}, e^{2\pi i\tau})
\eeq
be obtained from $\thi(x,q)$ by the substitution
$\,q=e^{2\pi i\tau},\;\;x=e^{2\pi iu}$\,. Then
\vvn.3>
\beq
\label{u+tau}
\theta(u+1)\,=\>-\,\theta(u)\,,\qquad
\theta(u+\tau)\,=\>-\,e^{-\pi i \tau-2\pi i u}\>\theta(u)\,.
\eeq

\vsk.3>
Let \,$q\to 0$\,, $x,a,b$ \,are fixed, \,$0<\Re\>\eps<1$\,, and \,$m\in\Z$\>,
then
\vvn.2>
\beq \label{eqn:thetl}
\thi(x)\to x^{1/2}\?-x^{-1/2}\:,\qquad
\frac{\thi(a\:q^{\>m+\eps})}{\thi(b\:q^{\>m+\eps})}\,\to\,(a/b)^{-m\:-1/2}\:.
\eeq

\vsk.3>
The following identities holds for \,$\thi(x)$:
\vvn.4>
\begin{align}
\label{two}
& \thi(\al\:y_1/x)\,\thi(h\:y_2/x)\,\thi(h\:y_1/y_2)\,\thi(\al)\,={}
\\[3pt]
&\!\<{}=\,\thi(\al\:h\:y_1/x)\,\thi(y_2/x)\,\thi(y_1/y_2)\,\thi(\al/h)
\>+\>\thi(h\:y_1/x)\,\thi(\al\:y_2/x)\,\thi(h)\,\thi(\al\:y_1/y_2)\,,
\notag
\end{align}
\begin{align}
\label{three}
& \thi(\al_1\al_2\:h\:y_1/x_1)\,\thi(y_2/x_1)\,\thi(h\:y_1/x_2)\,
\thi(\al_2\:h\:y_2/x_2)\,\thi(h\:x_2/x_1)\,\thi(y_1/y_2)\,\thi(\al_1)\>-{}
\\[3pt]
&{}\;\;\;-\,\thi(h\:y_1/x_1)\,\thi(\al_2\:h\:y_2/x_1)\,
\thi(\al_1\al_2\:h\:y_1/x_2)\,\thi(y_2/x_2)\,\thi(\al_1x_2/x_1)\,
\thi(y_1/y_2)\,\thi(h)\,={}
\notag
\\[3pt]
&\!{}=\,\thi(\al_1\al_2\:h\:y_1/x_1)\,\thi(h\:y_2/x_1)\,\thi(y_1/x_2)\,
\thi(\al_2\:h\:y_2/x_2)\,\thi(x_2/x_1)\,\thi(h\:y_1/y_2)\,\thi(\al_1)\>-{}
\notag
\\[3pt]
&{}\;\;\;-\,\thi(h\:y_1/x_1)\,\thi(\al_1\al_2\:h\:y_2/x_1)\,
\thi(\al_2\:h\:y_1/x_2)\,\thi(y_2/x_2)\,\thi(x_2/x_1)\,\thi(\al_1y_1/y_2)\,
\thi(h)\,.
\notag
\end{align}

\subsection{Elliptic \,$R$-matrix}

Given a positive integer \,$N$, let \,$\mub=(\mu_1\lc\mu_N)$\,.
Nonzero entries of Felder's dynamical \,$R$-matrix \,$^F\!\Rc(x,\mub)$ \,are
\vvn.2>
\be
^F\!\Rc_{jj}^{jj}(x,\mub)\,=\,1\,,\quad\;
^F\!\Rc_{jk}^{jk}(x,\mub)\,=\,
\frac{\thi(x)\,\thi(h\mu_j/\mu_k)}{\thi(xh)\,\thi(\mu_j/\mu_k)}\;,\quad\;
^F\!\Rc_{kj}^{jk}(x,\mub)\,=\,
\frac{\thi(x\mu_j/\mu_k)\,\thi(h)}{\thi(xh)\,\thi(\mu_j/\mu_k)}\;,\kern-1.6em
\vv.3>
\ee
where $j\ne k$\>, see~\cite{F}\,, where \,$h=e^{-2\pi i\gm}$ and
\;$\mu_j=e^{-2\pi i\la_j}$\,, \,$j=1\lc N$\:.
The \,$R$-matrix \,$\Rc(x,\mub)$
\vvn-.6>
\beq
\label{Rm}
\Rc_{jj}^{jj}(x,\mub)\,=\,1\,,\qquad\;
\Rc_{kj}^{jk}(x,\mub)\,=\,
\frac{\thi(x\mu_j/\mu_k)\,\thi(h)}{\thi(xh)\,\thi(\mu_j/\mu_k)}\;,\qquad
j\ne k\,,\kern-1em
\eeq
\be
\Rc_{jk}^{jk}(x,\mub)\,=\,\frac{\thi(x)\,\thi(h\mu_j/\mu_k)\,\thi(h\mu_k/\mu_j)}
{\thi(xh)\,\thi(\mu_j/\mu_k)\,\thi(\mu_k/\mu_j)}\;,\qquad
\Rc_{kj}^{kj}(x,\mub)\,=\,\frac{\thi(x)}{\thi(xh)}\;,\qquad j<k\,,\kern-2em
\vv.6>
\ee
is similar (in the case \,$N=2$\>) to the $R$-matrix in
\cite[formula (66)\:]{AO} under the identification
\,$x=u\,,\;\;h=1/\hbar\,,\;\;\mu_1/\mu_2\<=z$\,.

\subsection{Index set $\Il$, variables $\ttt$}
\label{sec:ilt}

We will use the following notations throughout the paper.
Let $n,N\in \N$ and let $\bla\in \N^N$ be such that $\sum_{k=1}^N \la_k=n$.
Set \,$\la^{(k)}\<=\la_1\lsym+\la_k$ \,for $k=0\lc N$\>,
and $\la^{\{1\}}\<=\la^{(1)}\lsym+\la^{(N-1)}$\>.

The set of partitions \,$I=(I_1\lc I_N)$ of \,$\{1\lc n\}$ \,with
\,$|I_k|=\la_k$ is denoted by \,$\Il$. For $I\?\in \Il$ we will use the
notation $\bigcup_{a=1}^k I_a = \{i^{(k)}_1<\ldots< i^{(k)}_{\la^{(k)}}\}.$

Consider variables $t^{(k)}_a$ for $k=1\lc N$, $a=1\lc\la^{(k)}$,
where $t^{(N)}_a\?=z_a$\,, \,$a=1\lc n$. Denote
$t^{(j)}=(t^{(j)}_k)_{k\leq\la^{(j)}}$ and \,$\ttt=(t^{(1)}\lc t^{(N-1)})$.

\subsection{Elliptic weight functions}
\label{sec:ellwe}

For $I\?\in\Il$ define the {\it elliptic weight function}
\beq
\label{WI}
W^{\>\on{ell}}_I(\ttt,\zz,h,\mub)\,=\,\bigl(\thi(h)\bigr)^{\la^{\{1\}}}\<
\Sym_{\>t^{(1)}} \ldots \Sym_{\>t^{(N-1)}} U_I(\ttt,\zz,h,\mub)\,,
\eeq
where \;$\Sym_{\>t^{(k)}}$ \,is the symmetrization with respect to
the variables \,$t^{(k)}_1\lc t^{(k)}_{\la^{(k)}}$,
\vvn.1>
\be
\Sym_{\>t^{(k)}} f\bigl(t^{(k)}_1\lc t^{(k)}_{\la^{(k)}}\bigr)\,=\!
\sum_{\si\in S_{\la^{(k)}}\!\!}\<
f\bigl(t^{(k)}_{\si(1)}\lc t^{(k)}_{\si(\la^{(k)})}\bigr)\,,
\vv-.2>
\ee
\beq
\label{UI}
U^{\>\on{ell}}_I(\ttt,\zz,h,\mub)\,=\,\prod_{k=1}^{N-1}\,\prod_{a=1}^{\la^{(k)}}\,\biggl(\,
\prod_{c=1}^{\la^{(k+1)}}\,\psi^{\>\on{ell}}_{I,k,a,c}(t^{(k+1)}_c/t^{(k)}_a)\,
\prod_{b=a+1}^{\la^{(k)}}\?\frac{\thi(ht^{(k)}_b/t^{(k)}_a)}
{\thi(t^{(k)}_b/t^{(k)}_a)}\,\biggr)\>,
\vv.5>
\eeq
\beq
\label{psi}
\psi^{\>\on{ell}}_{I,k,a,c}(x)\,=\left\{\begin{alignedat}3
&\kern4.56em\thi(hx)\,,&&\text{if} && i^{(k+1)}_c\!<i^{(k)}_a\,,\\[2pt]
&\,{\dsize\frac
{\thi(x\:h^{1+p_{I\<\<,\:j(I\<,\>k,\:a)}(i_a^{(k)})-\:p_{I\<\<,\:k+1}(i_a^{(k)})}
\mu_{k+1}/\mu_{j(I\<,\>k,\:a)})}
{\thi(h^{1+p_{I\<\<,\:j(I\<,\>k,\:a)}(i_a^{(k)})-\:p_{I\<\<,\:k+1}(i_a^{(k)})}
\mu_{k+1}/\mu_{j(I\<,\>k,\:a)})}}\;,\;\quad &&
\text{if}\;\quad&& i^{(k+1)}_c\!=i^{(k)}_a\,,
\\[2pt]
&\kern4.56em\,\thi(x)\,,&&\text{if} && i^{(k+1)}_c\!>i^{(k)}_a\,,
\end{alignedat}\right.\kern-1.6em
\eeq
where \,${j(I\<,\>k,\:a)}\in\{1,\dots,N\}$ \,is such that
\,$i^{(k)}_a\<\in\<I_{j(I\<,\>k,\:a)}$\,, \,and
\vvn.3>
\beq
\label{pjk}
p_{I\<\<,\:j}(m)\,=\,|\,I_j\cap\{1\lc m-1\}\>|\,,\qquad j=1\lc N\>.\kern-2em
\eeq

\vsk.3>
Denote by $\psi_I(h,\mub)$ the product of all denominators appearing
in \Ref{UI} and depending on $h,\mub$ \,only,
\vvn-.5>
\beq
\label{psI}
\psi_I(h,\mub)\,=\,\prod_{k=1}^{N-1}\,\prod_{k=1}^{\la^{(k)}}\,
\thi(h^{1+p_{I\<\<,\:j(I\<,\>k,\:a)}
(i_a^{(k)})-\:p_{I\<\<,\:k+1}(i_a^{(k)})}\mu_{k+1}/\mu_{j(I\<,\>k,\:a)})\,.
\eeq

\begin{rem}
Notice that the weight functions are regular at the diagonals $t^{(k)}_b=t^{(k)}_a$
despite the appearance of the denominators $\thi(t^{(k)}_b/t^{(k)}_a)$ in \Ref{UI}.
\end{rem}

For \,$\si\in S_n$ \>and \,$I\?\in\Il$\>, define the elliptic weight function
\vvn.4>
\beq
\label{Wsi}
W^{\>\on{ell}}_{\si\<,\>I}(\ttt,\zz, h,\mub)\,=\,
W^{\>\on{ell}}_{\si^{-1}(I)}(\ttt,z_{\si(1)}\lc z_{\si(n)},h,\mub)\,,
\vv.2>
\eeq
where \,$\si^{-1}(I)=\bigl(\si^{-1}(I_1)\lc\si^{-1}(I_N)\bigr)$.
Hence, \,$W^{\>\on{ell}}_I=W^{\>\on{ell}}_{\id\:,\:I}$.

\begin{example}
For $N=2$, $n=2$, $\lambda=(1,1)$, $s=(1,2)\in S_2$, $t^{(1)}_1\!=t$\,,
we have
\begin{align*}
W^{\>\on{ell}}_{\id,(\{1\},\{2\})} ={} &\>
\thi(h) \thi(z_2/t) \frac{\thi(hz_1/t \cdot\mu_2/\mu_1)}{\thi(h\mu_2/\mu_1)}\;,
\\
W^{\>\on{ell}}_{\id,(\{2\},\{1\})} ={} &\>
\thi(h) \thi(hz_1/t) \frac{\thi(z_2/t \cdot\mu_2/\mu_1)}{\thi(\mu_2/\mu_1)}\;,
\\
W^{\>\on{ell}}_{s,(\{1\},\{2\})} ={} &\>
\thi(h) \thi(hz_2/t) \frac{\thi(z_1/t \cdot\mu_2/\mu_1)}{\thi(\mu_2/\mu_1)}\;,
\\
W^{\>\on{ell}}_{s,(\{2\},\{1\})} ={} &\>
\thi(h) \thi(z_1/t) \frac{\thi(hz_2/t \cdot\mu_2/\mu_1)}{\thi(h\mu_2/\mu_1)}\;.
\end{align*}
\end{example}

\subsection{Exchange properties}
Let
\beq
\label{btx}
\bt(x_1,x_2,y_1,y_2,\al)\,=\,\Sym_{\>x_1,\>x_2}\thi(\al\:y_1/x_1)\,
\thi(y_2/x_1)\,\thi(h\:y_1/x_2)\,\thi(\al\:h\:y_2/x_2)\,
\frac{\thi(h\:x_2/x_1)}{\thi(x_2/x_1)}\;.\kern-2em
\eeq
\begin{lem}
\label{btxy}
$\,\beta(x_1,x_2,y_1,y_2,\al)$ \>is symmetric in \,$y_1,y_2$.
\end{lem}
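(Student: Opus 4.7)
Since $\beta$ is the symmetrization over $S_2$ acting on $(x_1,x_2)$, writing out the two summands and using $\thi(x_1/x_2)=-\thi(x_2/x_1)$ to clear the rational factor gives
\be
\thi(x_2/x_1)\cdot\beta(x_1,x_2,y_1,y_2,\al)\,=\,A-B\,,
\ee
where \,$A=\thi(\al y_1/x_1)\,\thi(y_2/x_1)\,\thi(hy_1/x_2)\,\thi(\al hy_2/x_2)\,\thi(hx_2/x_1)$ \,and \,$B=\thi(\al y_1/x_2)\,\thi(y_2/x_2)\,\thi(hy_1/x_1)\,\thi(\al hy_2/x_1)\,\thi(hx_1/x_2)$. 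The lemma is thus equivalent to $A-B$ being symmetric in $y_1,y_2$.

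The key step is to specialize identity \Ref{three} at \,$\al_1=1/h$, $\al_2=\al$\,, a choice forced by matching the first term of the LHS of \Ref{three} factor by factor with $A$. Using \,$\thi(1/h)=-\thi(h)$ \,and \,$\thi(x_2/(hx_1))=-\thi(hx_1/x_2)$\,, the LHS of \Ref{three} collapses to $-\thi(h)\,\thi(y_1/y_2)\,(A-B)$, while the RHS of \Ref{three} collapses to $-\thi(h)\,\bigl[\thi(hy_1/y_2)\,C-\thi(hy_2/y_1)\,D\bigr]$, where \,$C=\thi(\al y_1/x_1)\,\thi(hy_2/x_1)\,\thi(y_1/x_2)\,\thi(\al hy_2/x_2)\,\thi(x_2/x_1)$ \,and \,$D=\thi(hy_1/x_1)\,\thi(\al y_2/x_1)\,\thi(\al hy_1/x_2)\,\thi(y_2/x_2)\,\thi(x_2/x_1)$. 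Identity \Ref{three} therefore yields the closed form
\be
\beta(x_1,x_2,y_1,y_2,\al)\,=\,
\frac{\thi(h\:y_1/y_2)\,C-\thi(h\:y_2/y_1)\,D}{\thi(y_1/y_2)\,\thi(x_2/x_1)}\,.
\ee

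The $y_1\leftrightarrow y_2$ symmetry is then manifest from this formula: by direct inspection of the five factors, $C$ and $D$ are interchanged under $y_1\leftrightarrow y_2$, and $\thi(h\:y_1/y_2)\leftrightarrow\thi(h\:y_2/y_1)$, so the numerator acquires a sign $-1$; the denominator factor $\thi(y_1/y_2)$ also acquires $-1$, and the two signs cancel, leaving the ratio invariant.

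The main obstacle is recognizing that \Ref{three} is the right identity and finding the substitution $\al_1=1/h$, $\al_2=\al$; once those are fixed, the argument is pure bookkeeping with $\thi(1/x)=-\thi(x)$, and the symmetry is transparent from the final formula.
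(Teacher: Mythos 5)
Your proof is correct and uses the same key step as the paper: specializing identity \Ref{three} at $\al_1=1/h$, $\al_2=\al$. The paper phrases the conclusion as the relation $\bt(x_1,x_2,y_1,y_2,\al)=\bt(\al\:h\:y_1,\al\:h\:y_2,x_1,x_2,\al)$ and invokes the built-in symmetry of $\bt$ in its first two arguments, while your closed-form expression is exactly the right-hand side of that relation written out, so the two arguments are the same up to presentation.
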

\begin{proof}
Identity \Ref{three} for \,$\al_1=1/h\,,\;\al_2=\al$ \,is equivalent to
\beq
\bt(x_1,x_2,y_1,y_2,\al)\,=\,\bt(\al\:h\:y_1,\al\:h\:y_2,x_1,x_2,\al)\,,
\eeq
which proves the claim.
\end{proof}

Let \,$s_{\ij}\in S_n$ \>denote the transposition of \>$i$ \>and \>$j$\,.
Set \,\rlap{$\mub^{(i)}_I\<=(h^{-p_{I\<\<,1}(i)}\mu_1\lc
h^{-p_{I\<\<,N}(i)}\mu_N)$\,.}

\begin{thm}
\label{thm:recur}
Let \,$\si\in S_n$ \>be such that \,$\si(i)\in I_a$ and \,$\si(i+1)\in I_b$\>.
Then
\vv.3>
\beq
\label{a=b}
W^{\>\on{ell}}_{\si s_{i,i+1},\:I}\,=\,W^{\>\on{ell}}_{\si\<,\>I}
\vv.1>
\eeq
for \,$a=b$\,, and
\vvn.2>
\beq
\label{a<>b}
W^{\>\on{ell}}_{\si s_{i,i+1},\:I}\>=\,
\Rc_{ab}^{ab}(z_{\si(i)}/z_{\si(i+1)},\mub_{\si^{-1}(I)}^{(i)})
\,W^{\>\on{ell}}_{\si\<,\>I}\,+\>
\Rc_{ab}^{b\:a}(z_{\si(i)}/z_{\si(i+1)},\mub_{\si^{-1}(I)}^{(i)})
\,W^{\>\on{ell}}_{\si,\,s_{\si(i),\:\si(i+1)}(I)}\kern-2em
\vv.2>
\eeq
for \,$a\ne b$\,.
\end{thm}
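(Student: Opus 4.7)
The plan is to reduce to the case $\sigma = \id$, localize the $z_i, z_{i+1}$-dependent factors of the weight function, and then apply Lemma~\ref{btxy} and identity \Ref{two}.

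First I would use definition \Ref{Wsi}: setting $J = \sigma^{-1}(I)$ and $w_j = z_{\sigma(j)}$, we have $W^{\>\on{ell}}_{\sigma,I}(\ttt,\zz,h,\mub) = W^{\>\on{ell}}_J(\ttt, \bs w, h, \mub)$, while $W^{\>\on{ell}}_{\sigma s_{i,i+1},I}(\ttt,\zz,h,\mub) = W^{\>\on{ell}}_{s_{i,i+1}(J)}(\ttt, s_{i,i+1}\bs w, h, \mub)$, where $s_{i,i+1}\bs w$ swaps the $i$th and $(i+1)$st components of $\bs w$. Also $\sigma(i) \in I_a$ iff $i \in J_a$, $z_{\sigma(i)}/z_{\sigma(i+1)} = w_i/w_{i+1}$, and $\mub^{(i)}_{\sigma^{-1}(I)} = \mub^{(i)}_J$. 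So the theorem reduces to showing, for a general $J \in \Il$ with $i \in J_a, i+1 \in J_b$: if $a = b$, that $W^{\>\on{ell}}_J$ is symmetric in $(w_i, w_{i+1})$; if $a \ne b$, that \Ref{a<>b} holds with $J$ in place of $\sigma^{-1}(I)$ and $\bs w$ in place of $\zz$.

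The key observation is that in $U^{\>\on{ell}}_J$ the variables $z_i, z_{i+1}$ appear only through the $k = N-1$ layer, via $\psi^{\>\on{ell}}_{J,N-1,r,i}(z_i/t^{(N-1)}_r)$ and $\psi^{\>\on{ell}}_{J,N-1,r,i+1}(z_{i+1}/t^{(N-1)}_r)$. Since $i, i+1$ are consecutive integers, for every $r$ with $i^{(N-1)}_r \notin \{i, i+1\}$ the two factors take the same form (both $\thi(\cdot)$ or both $\thi(h\cdot)$), so the only potential asymmetry in $(z_i, z_{i+1})$ comes from the at most two indices $r_1 < r_2$ with $i^{(N-1)}_{r_j} \in \{i, i+1\}$, together with the cross factor $\thi(h t^{(N-1)}_{r_1+1}/t^{(N-1)}_{r_1})/\thi(t^{(N-1)}_{r_1+1}/t^{(N-1)}_{r_1})$ from \Ref{UI}. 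In the same-part case $a = b = N$ there is no such $r$ and symmetry is automatic. In the same-part case $a = b \le N-1$ one has $r_2 = r_1 + 1$, and direct inspection of \Ref{psi} shows that the product of the $z$-asymmetric factors (two dynamical $\psi$'s plus the cross thetas $\thi(z_{i+1}/t^{(N-1)}_{r_1})$ and $\thi(h z_i/t^{(N-1)}_{r_1+1})$), when symmetrized in $t^{(N-1)}_{r_1} \leftrightarrow t^{(N-1)}_{r_1+1}$, equals the expression $\beta(t^{(N-1)}_{r_1}, t^{(N-1)}_{r_1+1}, z_i, z_{i+1}, \alpha)$ from \Ref{btx} with $\alpha = h^{p_{J,a}(i) - p_{J,N}(i)} \mu_N/\mu_a$, up to factors independent of $z_i, z_{i+1}$. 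Lemma~\ref{btxy} then gives the claimed symmetry.

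For the different-part case $a \ne b$, the swap $J \mapsto s_{i,i+1}(J)$ moves $i$ into $J_b$ and $i+1$ into $J_a$, changing the equality-case $\psi$-factors and their $p$-counters near $r_1, r_2$. Applying identity \Ref{two} with $x = t^{(N-1)}_{r}$, $y_1 = z_i$, $y_2 = z_{i+1}$, and $\alpha = h^{1 + p_{J,a}(i) - p_{J,b}(i)} \mu_b/\mu_a$ (and its analogue for $a > b$) decomposes the LHS of \Ref{a<>b} into two terms: the first matches $\Rc^{ab}_{ab}(w_i/w_{i+1}, \mub^{(i)}_J)\,W^{\>\on{ell}}_J$ (the partition-preserving channel) and the second matches $\Rc^{ab}_{ba}(w_i/w_{i+1}, \mub^{(i)}_J)\,W^{\>\on{ell}}_{s_{i,i+1}(J)}$ (the partition-swapping channel), with the shifts encoded in $\mub^{(i)}_J$ absorbing the $h$-powers from the dynamical denominators. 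The symmetrizations over $t^{(k)}$ at levels $k < N-1$ are untouched by this local rearrangement. The main obstacle is the precise bookkeeping of the dynamical $h$-shifts $1 + p_{I,j(I,k,a)}(i_a^{(k)}) - p_{I,k+1}(i_a^{(k)})$ in \Ref{psi}: under $J \to s_{i,i+1}(J)$ the counters $p_{J,a}(i), p_{J,b}(i)$ change by $\pm 1$, and one must verify that these changes combine precisely with the shift $\mub^{(i)}_J$ appearing in the $R$-matrix parameter in \Ref{a<>b}. Case analysis by $a < b$ vs.\ $a > b$ and by whether $\{a, b\}$ contains $N$ is needed, but in each case the required exchange is provided by the single identity \Ref{two}.
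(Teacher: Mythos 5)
Your reduction to $\si=\id$ via \Ref{Wsi} matches the paper, and your treatment of the case $a=b$ is close in spirit to the paper's (which also rests on Lemma~\ref{btxy}); but even there, symmetrizing only the two distinguished slots $t^{(N-1)}_{r_1},t^{(N-1)}_{r_1+1}$ "in isolation" is not legitimate, because the remaining factors of \Ref{UI} — including the level-$(N-2)$ factors, which also depend on the $t^{(N-1)}$ variables — are not invariant under that swap. The paper handles this by first reducing to $n=2$ (grouping the terms of the symmetrization and factoring out common factors) and then telescoping the nested symmetrizations into a product of $\beta$'s via iterated use of Lemma~\ref{btxy}.

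The genuine gap is in the case $a\ne b$. You assert that the exchange is a purely top-level phenomenon: one application of identity \Ref{two} at level $k=N-1$, with "the symmetrizations over $t^{(k)}$ at levels $k<N-1$ untouched". This fails for $N\ge 3$ whenever $\max(a,b)<N$. Relation \Ref{a<>b} compares $W^{\>\on{ell}}_{s_{i,i+1}(I)}$ with $W^{\>\on{ell}}_I$, and the defining products $U^{\>\on{ell}}_I$ and $U^{\>\on{ell}}_{s_{i,i+1}(I)}$ differ at \emph{every} level from about $\min(a,b)$ up to $N-1$, not only at the top: the dynamical factors of \Ref{psi} tracking the elements $i$ and $i+1$ occur at all levels $k\ge a$, resp.\ $k\ge b$, and after the swap they carry different $h$-shifts and different inequality types. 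Concretely, for $N=3$, $n=2$, $I=(\{2\},\{1\},\emptyset)$ the level-$1$ factors of $U^{\>\on{ell}}_I$ and $U^{\>\on{ell}}_{s(I)}$ already differ, so no manipulation of the $z$-dependent top-level factors alone can produce \Ref{a<>b}. This is why the paper's proof is an induction on the level: it establishes the intermediate identity \Ref{RW} for all $k\ge a$, whose base case is \Ref{two}, but whose induction step pushes the exchange through each $\Sym_{\>t^{(k)}}$ using the four-term identity \Ref{three} (in the rearranged form displayed in the proof). Your proposal never invokes \Ref{three} in the $a\ne b$ case; moreover, when both $a,b\le N-1$ the top level itself contains two distinguished slots (those tracking $i$ and $i+1$), so even the local top-level exchange is governed by a \Ref{three}-type identity rather than \Ref{two} with a single $x$ (your choice of $\al$ with $\mu_b/\mu_a$ also only matches the subcase $N\in\{a,b\}$, since the equal-case factor at the top level involves $\mu_N/\mu_j$). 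So "the single identity \Ref{two}" does not suffice; the missing ingredient is precisely the level-by-level induction driven by \Ref{three}.
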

\begin{proof}
By formula \Ref{Wsi}\:, it suffices to prove the statement for \,$\si$
\,being the identity permutation. In that case, \,$i\in I_a$\,,
\,$i+1\in I_b$\,, formula \Ref{a=b} for \,$a=b$ \,reads
\vvn.3>
\beq
\label{abeq}
W^{\>\on{ell}}_I(\ttt,\zz^{(i)}\!,h,\mub)\,=\,
W^{\>\on{ell}}_I(\ttt,\zz,h,\mub)\,,
\vv.1>
\eeq
where \,$\zz^{(i)}=(z_1\lc z_{i-1},z_{i+1},z_i,z_{i+2}\lc z_n)$\,,
and formula \Ref{a<>b} for \,$a\ne b$ \,reads
\vvn.2>
\begin{align}
\label{abne}
W^{\>\on{ell}}_{s_{i,i+1}(I)}(\ttt,\zz^{(i)}\!,h,\mub)\,=\,{} &
\Rc_{ab}^{ab}(z_i/z_{i+1},\mub_I^{(i)})\,
W^{\>\on{ell}}_I(\ttt,\zz,h,\mub)\>+{}
\\[3pt]
\notag
{}+\,{} & \Rc_{ab}^{b\:a}(z_i/z_{i+1},\mub_I^{(i)})
\,W^{\>\on{ell}}_{s_{i,i+1}(I)}(\ttt,\zz,h,\mub)\,.\kern-2em
\end{align}

\goodbreak
\noindent
Furthermore, formula \Ref{UI} implies that it suffices to consider only
\vv.08>
the case \,$n=2$\>, \,$i=1$\>. By formula \Ref{UI}, the products
\,$U^{\>\on{ell}}_{s_{i,i+1}(I)}(\ttt,\zz^{(i)}\!,h,\mub)$\,,
\,$U^{\>\on{ell}}_I(\ttt,\zz,h,\mub)$\,, and
\vv.1>
\,$U^{\>\on{ell}}_{s_{i,i+1}(I)}(\ttt,\zz,h,\mub)$ \,have many common factors,
being different in the part that reproduces the case \,$n=2$\>, \,$i=1$
\,up to a change of notation. Then formula \Ref{abne} in general can be
obtained by taking formula \Ref{abne} for \,$n=2$\>, \,$i=1$ \,in appropriate
\vv.06>
variables, multiplying it by the common factors of the products
\,$U^{\>\on{ell}}_{s_{i,i+1}(I)}(\ttt,\zz^{(i)}\!,h,\mub)$\,,
\,$U^{\>\on{ell}}_I(\ttt,\zz,h,\mub)$\,,
\vv.08>
\,$U^{\>\on{ell}}_{s_{i,i+1}(I)}(\ttt,\zz,h,\mub)$\,, and subsequent
symmetrization in \,$\TT^{(j)}$ variables for each \,$j=1\lc N-1$\,.

\vsk.2>
Assume \,$n=2$\>, \,$i=1$\,. To simplify the notation, we write
\,$s=s_{1,2}$\,. The proof uses the recursive structure of the weight
functions. Set
\vvn.3>
\be
\Ut_I\%k(\TT^{(k)}\<,\:\TT^{(k+1)})\,=\,\prod_{a=1}^{\la^{(k)}}\,\biggl(\,
\prod_{c=1}^{\la^{(k+1)}}\,\psi^{\>\on{ell}}_{k,a,c}(t^{(k+1)}_c/t^{(k)}_a)\,
\prod_{b=a+1}^{\la^{(k)}}\?\frac{\thi(ht^{(k)}_b/t^{(k)}_a)}
{\thi(t^{(k)}_b/t^{(k)}_a)}\,\biggr)\>,
\vv.2>
\ee
so that \,$U^{\>\on{ell}}_I(\ttt,\zz)=
\prod_{k=1}^{N-1}\Ut_I\%k(\TT^{(k)}\<,\:\TT^{(k+1)})$\,,
see~\Ref{UI}\:, \Ref{psi}\:. Define
\vvn.4>
\beq
\label{WMI}
W_I\%k(\ttt\%{k-1}\<,\:\TT^{(k)})\,=\,
\Sym_{\>\TT^{(1)}} \ldots \Sym_{\>\TT^{(k-1)}}
\bigl(\>\Ut_I\%1(\TT^{(1)}\<,\:\TT^{(2)})\ldots
\Ut_I\%{k-1}(\TT^{(k-1)}\<,\:\TT^{(k)})\bigr)
\eeq
where \;$\TT\%j=(\TT^{(1)}\<\lc\:\TT^{(j)})$\,. In particular,
\beq
\label{WWN}
W^{\>\on{ell}}_I(\ttt,\zz)\,=\,
\bigl(\thi(h)\bigr)^{\la^{\{1\}}}W_I\%N(\ttt,\zz)\,.
\eeq
Formula~\Ref{WMI} yields
\vvn.1>
\beq
\label{WWU}
W_I\%{k+1}(\ttt\%k\<,\:\TT^{(k+1)})\,=\,
\Sym_{\>\TT^{(k)}}\bigl(\>W_I\%k(\ttt\%{k-1}\<,\;\TT^{(k)})\,
\Ut_I\%k(\TT^{(k)}\<,\:\TT^{(k+1)})\bigr)\,.
\eeq

\vsk.4>
Let \,$a=b$\,. Then \,$I_a\<=\{1\:,\<2\:\}$\,, \,$I_c\<=\Empty$ \,for
\,$c\ne a$\,,
\;$\Ut_I\%k\!=1$ \,for \,$k<a$\,, and
\vvn.3>
\be
\Ut_I\%k=\,\zt(t_1^{(k)}\?,\:t_2^{(k)}\?,\:t_1^{(k+1)}\?,\:t_2^{(k+1)}\?,
1/h\:,\:h\:\mu_{k+1}/\mu_a)
\vv.1>
\ee
for \,$k\ge a$\,, where
\vvn-.2>
\beq
\label{zt}
\zt(x_1,x_2,y_1,y_2,\al_1,\al_2)\,=\,
\thi(\al_1\al_2\:h\:y_1/x_1)\,\thi(y_2/x_1)\,\thi(h\:y_1/x_2)\,
\thi(\al_2\:h\:y_2/x_2)\,\frac{\thi(h\:x_2/x_1)}{\thi(x_2/x_1)}\;,\kern-2em
\eeq
cf.~\Ref{btx}\:. Therefore, by formula~\Ref{WWU} and Lemma~\ref{btxy}\:,
\vvn.3>
\be
W_I\%N(\ttt,z_1,z_2)\,=\,\prod_{k=a}^{N-1}\:
\bt(t_1^{(k)}\?,\:t_2^{(k)}\?,\:t_1^{(k+1)}\?,\:t_2^{(k+1)}\?,
\:h\:\mu_{k+1}/\mu_1)\,=\,W_I\%N(\ttt,z_2,z_1)\,.
\vv.3>
\ee
By formulae~\Ref{Wsi} and \Ref{WWN}\:, we have
\,$W^{\>\on{ell}}_{s,I}(\ttt,z_1,z_2)=W^{\>\on{ell}}_I(\ttt,z_2,z_1)=W^{\>\on{ell}}_I(\ttt,z_1,z_2)$\>,
which proves equality~\Ref{abeq}\:.

\vsk.3>
Let \,$a\ne b$\,. Then \,$I_a\<=\{1\:\}\,,$ \,$I_b=\{2\:\}$\,, and
\,$I_c\<=\Empty$ \,for \,$c\ne a\:,b$\,. To simplify writing, we will consider
only the case \,$a>b$\,. The proof in the case \,$a<b$ is completely similar.

\vsk.2>
Let \,$a>b$\,. Then \;$\Ut_I\%k\!=\Ut_{s(I)}\%k\<=1$ \,for \,$k<b$\,, and
\vvn-.2>
\begin{alignat}2
& \Ut_I\%k=\,\Ut_{s(I)}\%k\>=\,
\frac{\thi\bigl(h\:\mu_{k+1}\:t^{(k+1)}_1\!/(\mu_b\>t^{(k)}_1)\bigr)}
{\thi(h\:\mu_{k+1}/\mu_b)}\;, && b\le k<a-1\,,\kern-3em
\notag
\\[7pt]
& \Ut_I\%{a-1}=\,\frac{\thi(h\:t^{(a)}_1\!/t^{(a-1)}_1)\,
\thi\bigl(\mu_a\:t^{(a)}_2\!/(\mu_b\>t^{(a-1)}_1)\bigr)}
{\thi(\mu_a/\mu_b)}\;,
\notag
\\[5pt]
& \Ut_{s(I)}\%{a-1}=\,\frac{\thi(t^{(a)}_2\!/t^{(a-1)}_1)\,
\thi\bigl(h\:\mu_a\:t^{(a)}_1\!/(\mu_b\>t^{(a-1)}_1)\bigr)}
{\thi(h\:\mu_a/\mu_b)}\;,
\notag
\\[12pt]
\label{Utk}
& \Ut_I\%k=\,\zt(t_1^{(k)}\?,\:t_2^{(k)}\?,\:t_1^{(k+1)}\?,\:t_2^{(k+1)}\?,
\:\mu_b\:/\mu_a\:,\>\mu_{k+1}/\mu_b)\;, && k\ge a\,,
\\[6pt]
\label{Utsk}
& \Ut_{s(I)}\%k\>=\,
\zt(t_1^{(k)}\?,\:t_2^{(k)}\?,\:t_1^{(k+1)}\?,\:t_2^{(k+1)}\?,
\:\mu_a\:/\mu_b\:,\>\mu_{k+1}/\mu_a)\;,\qquad && k\ge a\,,
\\[-12pt]
\notag
\end{alignat}
where \,$\zt$ is defined by~\Ref{zt}\:. Set
\vvn.2>
\beq
\label{rho}
\rho(x)\,=\,\frac{\thi(xh)}{\thi(x)}\;,\qquad
\phi(x)\,=\,\frac{\thi(x\mu_b\:/\mu_a)\,\thi(h)}
{\thi(x)\,\thi(\mu_b\:/\mu_a)}\;.
\vv.3>
\eeq
We will prove the equality
\vvn.4>
\begin{align}
\label{RW}
& W_I\%k(\ttt\%{k-1}\<,\:t^{(k)}_1\<,\:t^{(k)}_2)\,={}
\\[5pt]
& \!{}=\,\rho(t^{(k)}_1\!/\:t^{(k)}_2)\,
W_{s(I)}\%k(\ttt\%{k-1}\<,\:t^{(k)}_2\<,\:t^{(k)}_1)\:-\:
\phi(t^{(k)}_1\!/\:t^{(k)}_2)\,
W_{s(I)}\%k(\ttt\%{k-1}\<,\:t^{(k)}_1\<,\:t^{(k)}_2)
\notag
\\[-14pt]
\notag
\end{align}
for \,$k\ge a$\,. By formulae~\Ref{Rm}\:, \Ref{Wsi}\:, \Ref{WWN}\:,
\Ref{rho}\:, this equality proves relation~\Ref{abne}\:.

\vsk.3>
The proof of formula~\Ref{RW} is by induction on \,$k$\,. The base of induction
is \,$k=a$\,, when formula~\Ref{RW} reduces to identity~\Ref{two}\:.
For the induction step, we multiply formula~\Ref{RW} by \,$\Ut_I\%k$,
see~\Ref{Utk}\:, and symmetrize over \,$t^{(k)}_1\<,\:t^{(k)}_2$:
\vvn.3>
\begin{align*}
& \Sym_{t^{(k)}_1\!,\>t^{(k)}_2}
W_I\%k(\ttt\%{k-1}\<,\:t^{(k)}_1\<,\:t^{(k)}_2)\;
\zt(t_1^{(k)}\?,\:t_2^{(k)}\?,\:t_1^{(k+1)}\?,\:t_2^{(k+1)}\?,
\:\mu_b\:/\mu_a\:,\>\mu_{k+1}/\mu_b)\,={}
\\[5pt]
& {}=\,\Sym_{t^{(k)}_1\!,\>t^{(k)}_2}\rho(t^{(k)}_1\!/\:t^{(k)}_2)\,
W_{s(I)}\%k(\ttt\%{k-1}\<,\:t^{(k)}_2\<,\:t^{(k)}_1)\;
\zt(t_1^{(k)}\?,\:t_2^{(k)}\?,\:t_1^{(k+1)}\?,\:t_2^{(k+1)}\?,
\:\mu_b\:/\mu_a\:,\>\mu_{k+1}/\mu_b)\,-{}
\\[5pt]
&\hp{{}={}}\!-\,\Sym_{t^{(k)}_1\!,\>t^{(k)}_2}\phi(t^{(k)}_1\!/\:t^{(k)}_2)
\,W_{s(I)}\%k(\ttt\%{k-1}\<,\:t^{(k)}_1\<,\:t^{(k)}_2)\;
\zt(t_1^{(k)}\?,\:t_2^{(k)}\?,\:t_1^{(k+1)}\?,\:t_2^{(k+1)}\?,
\:\mu_b\:/\mu_a\:,\>\mu_{k+1}/\mu_b)\,.
\\[-14pt]
\end{align*}
Now we evaluate the left\:-hand side by formula~\Ref{WWU} and swap
\,$t^{(k)}_1\<,\:t^{(k)}_2$ in the first summand in the right\:-hand side
to factor out the common factor
\,$W_{s(I)}\%k(\ttt\%{k-1}\<,\:t^{(k)}_1\<,\:t^{(k)}_2)$\,:
\vvn.2>
\begin{align}
\label{ind}
W_I\%{k+1}(\ttt\%k\<,\:t^{(k+1)}_1\<,\:t^{(k+1)}_2)\, &{}={}
\\[4pt]
{}=\,\Sym_{t^{(k)}_1\!,\>t^{(k)}_2}\Bigl(
W_{s(I)}\%k(\ttt\%{k-1}\<,\:t^{(k)}_1\<,\:t^{(k)}_2\!\< & \:\,{})\>
\bigl(\>\rho(t^{(k)}_2\!/\:t^{(k)}_1)\;
\zt(t_2^{(k)}\?,\:t_1^{(k)}\?,\:t_1^{(k+1)}\?,\:t_2^{(k+1)}\?,
\:\mu_b\:/\mu_a\:,\>\mu_{k+1}/\mu_b)\:-{}
\notag
\\[4pt]
& {}-\:\phi(t^{(k)}_1\!/\:t^{(k)}_2)\;
\zt(t_1^{(k)}\?,\:t_2^{(k)}\?,\:t_1^{(k+1)}\?,\:t_2^{(k+1)}\?,
\:\mu_b\:/\mu_a\:,\>\mu_{k+1}/\mu_b)\bigr)\?\Bigr)\,.
\notag
\\[-15pt]
\notag
\end{align}
The next step is to use the identity
\vvn.2>
\begin{align*}
& \rho(x_1/x_2)\,\zt(x_1,x_2,y_1,y_2,\al_1,\al_2)\:-\:
\phi(x_2/x_1)\,\zt(x_2,x_1,y_1,y_2,\al_1,\al_2)\,={}
\\[4pt]
&{}=\,\rho(y_1/y_2)\,\zt(x_2,x_1,y_2,y_1,1/\al_1,\al_1\:\al_2)\:-\:
\phi(y_1/y_2)\,\zt(x_2,x_1,y_1,y_2,1/\al_1,\al_1\:\al_2)\,,
\\[-14pt]
\end{align*}
equivalent to~\Ref{three}\:, to get
\vvn.5>
\begin{alignat*}2
&W_I\%{k+1}(\ttt\%k\<,\:t^{(k+1)}_1\<,\:t^{(k+1)}_2)\,={}
\\[4pt]
& {}=\,\Sym_{t^{(k)}_1\!,\>t^{(k)}_2}
W_{s(I)}\%k(\ttt\%{k-1}\<,\:t^{(k)}_1\<,\:t^{(k)}_2)\,
\rho(t^{(k+1)}_1\!/\:t^{(k+1)}_2)\;
\zt(t_1^{(k)}\?,\:t_2^{(k)}\?,\:t_2^{(k+1)}\?,\:t_1^{(k+1)}\?,
\:\mu_a\:/\mu_b\:,\>\mu_{k+1}/\mu_a)\:-{}
\\[4pt]
&{}\>-\,\Sym_{t^{(k)}_1\!,\>t^{(k)}_2}
W_{s(I)}\%k(\ttt\%{k-1}\<,\:t^{(k)}_1\<,\:t^{(k)}_2)\,
\phi(t^{(k+1)}_1\!/\:t^{(k+1)}_2)\;
\zt(t_1^{(k)}\?,\:t_2^{(k)}\?,\:t_1^{(k+1)}\?,\:t_2^{(k+1)}\?,
\:\mu_a\:/\mu_b\:,\>\mu_{k+1}/\mu_a)\bigr)\,.
\\[-12pt]
\end{alignat*}
Applying formulae~\Ref{Utsk} and~\Ref{WWU} in the right\:-hand side of the last
equality transforms it into formula~\Ref{RW} with \,$k$ \,replaced by
\,$k+1$\,, which completes the induction step.

\vsk.2>
Theorem~\ref{thm:recur} is proved.
\end{proof}

\subsection{Transformation properties}

\begin{lem}
\label{lem:tr}
For \,$I\in\Il$\,, denote
\vvn.3>
\beq
\label{Sl}
G_\bla(\ttt,\zz,h,\mub)\,=\,\prod_{k=1}^{N-1}\,\prod_{a=1}^{\la^{(k)}}\,
\prod_{c=1}^{\la^{(k+1)}}\thi(t^{(k+1)}_c\?/\:t^{(k)}_a)\,\prod_{k=1}^{N-1}\,
\frac{\thi(t^{(k)}_1\!\dots t^{(k)}_{\la^{(k)}}\>h^{-\la_k}\mu_k/\mu_{k+1})}
{\thi(t^{(k)}_1\!\dots t^{(k)}_{\la^{(k)}})\;\thi(h^{-\la_k}\mu_k/\mu_{k+1})}
\;,\kern-1em
\eeq
\vv.1>
\begin{align}
\label{Si}
G_I(\zz,h,\mub)\, &{}=\,\prod_{j=1}^{N-1}\,
\frac{\thi(h^{-\la_j\lsym-\:\la_{N-1}}\mu_j/\mu_N)\;
\thi\bigr(\prod_{a\in I_j\!} z_a\bigr)}
{\thi\bigl(h^{-\la_j\lsym-\la_{N-1}}(\mu_j/\mu_N)\prod_{a\in I_j\!} z_a\bigr)}
\\[3pt]
&{}\>\times\,\prod_{j=1}^{N-1}\,\prod_{a\in I_j\!}\;\frac{\thi(z_a)}
{\thi(z_a\:h^{\>a\:-1-\:p_{I\<\<,\:j}(a)-\la_1\lsym-\:\la_{j-1}})}\;,
\notag
\\[-14pt]
\notag
\end{align}
where $\;t^{(N)}_c\!=z_c$ \>and
$\;p_{I\<\<,\:j}(a)=|\,I_j\cap\{1\lc a-1\}\>|$\,, see~\Ref{pjk}. Set
\vvn.3>
\beq
\label{trpr}
G_{\bla,I}(\ttt,\zz,h,\mub)\,=\,
G_\bla(\ttt,\zz,h,\mub)\,G_I(\zz,h,\mub)\,.
\vv.3>
\eeq
Then the ratio
\;$W^{\>\on{ell}}_I(\ttt,\zz,h,\mub)/G_{\bla,I}(\ttt,\zz,h,\mub)$ \,does not
change if any of the variables \;$t^{(k)}_i\<,\:h,\:\mu_j$ \,is multiplied by
\;$q$\,.
\end{lem}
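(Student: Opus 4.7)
The claim is a direct matching of quasi-periodicity multipliers, using only the relation
\be
\thi(qx)/\thi(x)\,=\,-q^{-1/2}/x,\qquad \thi(q^{-1}x)/\thi(x)\,=\,-q^{-1/2}\,x
\ee
(the second obtained by inverting \Ref{one}). One computes, for each $q$-shift in question, the multiplier acquired by $W^{\>\on{ell}}_I$ and by $G_{\bla,I}$, and verifies they coincide. Since $W^{\>\on{ell}}_I$ (via the symmetrization in \Ref{WI}) and $G_\bla$ (by inspection of \Ref{Sl}) are both symmetric in each block \,$(t^{(k)}_1\lc t^{(k)}_{\la^{(k)}})$\,, and $G_I$ depends only on $\zz,h,\mub$, it suffices to verify invariance of the ratio under \,$t^{(k_0)}_1\<\to q\>t^{(k_0)}_1$ for each $k_0$, and under the global shifts \,$h\<\to qh$ and \,$\mu_j\<\to q\mu_j$.

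For the shift \,$t^{(k_0)}_1\<\to q\>t^{(k_0)}_1$\>, the multiplier on $G_\bla$ is read off directly from \Ref{Sl}: the theta factors \,$\thi(t^{(k_0+1)}_\bull/t^{(k_0)}_1)$ and \,$\thi(t^{(k_0)}_1/t^{(k_0-1)}_\bull)$ yield a $(-q^{-1/2})$-monomial, while the ratio \,$\thi(T_{k_0}h^{-\la_{k_0}}\mu_{k_0}/\mu_{k_0+1})/\thi(T_{k_0})$, with \,$T_{k_0}\<=t^{(k_0)}_1\?\dots\>t^{(k_0)}_{\la^{(k_0)}}$, contributes \,$h^{\la_{k_0}}\mu_{k_0+1}/\mu_{k_0}$. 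For $W^{\>\on{ell}}_I$ one examines any fixed summand of the $t^{(k_0)}$-symmetrization in which $t^{(k_0)}_1$ plays the role of position $a\in\{1\lc\la^{(k_0)}\}$ in $U^{\>\on{ell}}_I$. The factors depending on $t^{(k_0)}_1$ split into four groups:
(A)~the \,$\la^{(k_0+1)}$\, theta functions \,$\psi^{\>\on{ell}}_{I,k_0,a,c}$\,;
(B)~the \,$\la^{(k_0)}\<-\<a$\, ratios \,$\thi(h\>t^{(k_0)}_b/t^{(k_0)}_1)/\thi(t^{(k_0)}_b/t^{(k_0)}_1)$ for $b>a$, each contributing $h$;
(C)~the \,$a-1$\, analogous ratios for $b<a$, each contributing $h^{-1}$;
(D)~the \,$\la^{(k_0-1)}$\, theta functions \,$\psi^{\>\on{ell}}_{I,k_0-1,a',a}$.
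Applying the quasi-periodicity rule case-by-case on the three branches of \Ref{psi}, and using the bookkeeping identities
\[
\#\{c\::\:i^{(k_0+1)}_c<i^{(k_0)}_a\}\,=\!\sum_{l\le k_0+1}\!p_{I\<,\>l}(i^{(k_0)}_a),\qquad \sum_{l\le k_0}p_{I\<,\>l}(i^{(k_0)}_a)\,=\,a-1,
\]
the $h$- and $\mu$-contributions from (A) and (D) telescope against the factor \,$h^{\la^{(k_0)}-2a+1}$ from (B) and (C), producing a net multiplier that coincides with the $G_\bla$-multiplier above, \emph{independently of the position} $a$. This position-independence is the essential point: it guarantees that the common multiplier factors out of the symmetrization.

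The global shifts \,$h\<\to qh$ and \,$\mu_j\<\to q\mu_j$ commute with the symmetrization, so are handled more directly. One collects the $q$-weights from the \,$\thi(h)^{\la^{\{1\}}}$\, prefactor in \Ref{WI}, the \,$\thi(h\>x)$\, numerators of \Ref{psi} in the \,$i^{(k+1)}_c\<<i^{(k)}_a$\, case, the \,$\psi^{\>\on{ell}}$-denominators in the \,$=$\, case, and compares with the \,$h$- and \,$\mu$-powers appearing in \Ref{Sl} and \Ref{Si}; matching is then a straightforward identification.

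The main obstacle is the $a$-independence verification in the $t$-shift case, which rests on the telescoping identities above together with a careful case analysis of the three branches of \,$\psi^{\>\on{ell}}_{I,k,a,c}$\,. Once those exponents reorganize to an $a$-independent expression, the invariance of the ratio is immediate.
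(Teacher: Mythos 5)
Your route is essentially the paper's: the proof in the paper consists of the assertion that the unsymmetrized ratio $U^{\>\on{ell}}_I/G_{\bla,I}$ is unchanged when any of $t^{(k)}_i,\:h,\:\mu_j$ is multiplied by $q$ (read off factor by factor from \Ref{UI}), plus the remark that $G_{\bla,I}$ is symmetric in each block $t^{(k)}$, so the invariance survives the symmetrization. Your reduction to the single shift $t^{(k_0)}_1\to q\,t^{(k_0)}_1$ together with the position-independence of the acquired multiplier is the same computation, organized summand-by-summand, and your $t$-shift bookkeeping is correct: with your two counting identities and the three-branch case analysis of \Ref{psi} (including the dichotomy according to whether the element $i^{(k_0)}_a$ already occurs at level $k_0-1$, which you should make explicit), the net multiplier coincides with the multiplier of $G_\bla$, independently of the position $a$ and of $I$.

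The $h\to q\,h$ case, which you leave as a ``straightforward identification'', is where the argument as written would not close. First, your list of moving factors is incomplete: the numerators $\thi(h\,t^{(k)}_b/t^{(k)}_a)$ in \Ref{UI} and the numerators of the middle branch of \Ref{psi} (whose argument carries $h^{1+p_{I,j}-p_{I,k+1}}$) also change under $h\to qh$, as do the $h$-dependent factors of \Ref{Sl} and \Ref{Si}. Second, and more importantly, if the prefactor $\thi(h)^{\la^{\{1\}}}$ of \Ref{WI} is kept in the accounting, as you propose, the multipliers do not cancel: already for $N=2$, $n=2$, $\bla=(1,1)$, $I=(\{1\},\{2\})$ a direct check using \Ref{one} shows that $W^{\>\on{ell}}_I/G_{\bla,I}$ is multiplied by $\thi(qh)/\thi(h)=-q^{-1/2}h^{-1}$ under $h\to qh$, while $U^{\>\on{ell}}_I/G_{\bla,I}$ is invariant; in general the prefactored ratio acquires $(-q^{-1/2}h^{-1})^{\la^{\{1\}}}$. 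So the identification you promise holds at the level of $U^{\>\on{ell}}_I/G_{\bla,I}$ --- which is exactly the statement the paper's proof verifies, and is consistent with the Examples following the lemma, where the prefactor does not appear --- but not verbatim for $W^{\>\on{ell}}_I$ with the prefactor of \Ref{WI}. You should either carry out the $h$-verification for $U^{\>\on{ell}}_I$ and then state explicitly how the prefactor is treated, or record the extra factor. The $t$- and $\mu$-shift parts of your argument are unaffected, since the prefactor involves neither $t$ nor $\mu$.
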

\begin{proof}
By formula \Ref{UI}, the ratio
\;$U^{\>\on{ell}}_I(\ttt,\zz,h,\mub)/G_{\bla,I}(\ttt,\zz,h,\mub)$ \,does not
change if any of the variables \;$t^{(k)}_i\<,\:h,\:\mu_j$ \,is multiplied
by \,$q$\,. Since \,$G_{\bla,I}(\ttt,\zz,h,\mub)$ \,is symmetric in
\,$t^{(i)}_1\!\lc t^{(i)}_{\la^{(i)}}$ \,for each \,$i=1\lc N-1$\,,
the same is true for
\;$W^{\>\on{ell}}_I(\ttt,\zz,h,\mub)/G_{\bla,I}(\ttt,\zz,h,\mub)$\,.
\end{proof}

Notice that
\begin{align*}
& a-1-p_{I\<\<,\:j(a)}-\la_1\lsym-\la_{j-1}\,={}
\\[4pt]
&{}\!\!=\,\bigl|\:\{1\lc a-1\}\cap\bigl(I_{j+1}\lsym\cup I_N\bigr)\:\bigr|
\>-\>\bigl|\:\{a+1\lc n\}\cap\bigl(I_1\lsym\cup I_{j-1}\bigr)\:\bigr|
\end{align*}

\vsk.3>\noindent
Let $\;\nu_k=h^{\>\la_1\lsym+\:\la_{k-1}}\mu_k$\,.
Then $\;h^{-\la_k}\mu_k/\mu_{k+1}=\:\nu_k/\nu_{k+1}$ \,and
$\;h^{-\la_j\lsym-\:\la_{N-1}}\mu_j/\mu_N=\:\nu_j/\nu_N$\,.

\vsk>
\let\exno1
\begin{exam}
$\,N=2\,,\;I_1=\{\:s\:\}\,,\;\la_1=1\,,\;\la_2=n-1$\,.
\vvn.1>
\be
W_I(t,\zzz)\,=\,\prod_{a=1}^{s-1}\,\thi(h\:z_a/t)\;\;
\frac{\thi\bigl(h^{\:2-s}\>(\mu_1/\mu_2)\>z_s/t\bigr)}
{\thi(h^{\:2-s}\:\mu_1/\mu_2)}\,\prod_{b=s+1}^n\!\thi(z_b/t)\;.
\vv.1>
\ee
The product~\Ref{trpr} equals
\be
\prod_{a=1}^n\,\thi(z_a/t)\;\;
\frac{\thi(h^{-1}t\:\mu_1/\mu_2)}{\thi(h^{-1}\mu_1/\mu_2)\,\thi(\:t\:)}\;
\frac{\thi(h^{-1}\mu_1/\mu_2)\,\thi(z_s)}{\thi(h^{-1}\:z_s\:\mu_1/\mu_2)}\;
\frac{\thi(z_s)}{\thi(z_sh^{\>s-1})}
\,\;\prod_{b=1}^{s-1}\,\frac{\thi(z_b)}{\thi(z_b\:h^{-1})}
\;\prod_{b=s+1}^n\:\frac{\thi(z_b)}{\thi(z_b)}\;,
\vv.2>
\ee
and $\;\nu_1=\:\mu_1\,,\;\nu_2=h\:\mu_2\,,\;h^{-1}\mu_1/\mu_2=\:\nu_1/\nu_2$\,.
\end{exam}

\vsk.3>
\let\exno2
\begin{exam}
$\,N=2\,,\;n=3\,,\;I_1=\{1,2\:\}\,,\;I_2=\{\:3\:\}\,,
\;\la_1=2\,,\;\la_2=0$\,.
\vvn.4>
\begin{align*}
W_I(t_1,t_2,z_1,z_2,z_3)\, &{}=\,
\thi(z_2/t_1)\;\thi(z_3/t_1)\;\thi(h\>z_1/t_2)\;\thi(z_3/t_2)\,\times{}
\\[6pt]
& {}\times\;\frac{\thi\bigl(h\>(\mu_2/\mu_1)\>z_1/t_1\bigr)\;
\thi\bigl(h^2(\mu_2/\mu_1)\>z_2/t_2\bigr)}
{\thi(h\:\mu_2/\mu_1)\;\thi(h^2\mu_2/\mu_1)}\;
\frac{\thi(h\>t_2/t_1)}{\thi(t_2/t_1)}\;+\;[\>t_1\leftarrow\!\!\to\:t_2\>]\;.
\end{align*}
The product~\Ref{trpr} equals
\vvn.4>
\be
\prod_{a=1}^2\,\prod_{c=1}^3\,\thi(z_c/t_a)\;\times\,
\frac{\thi(t_1\:t_2\>h^{-2}\mu_1/\mu_2)}
{\thi(t_1\:t_2)\;\thi(h^{-2}\mu_1/\mu_2)}\,\times\,
\frac{\thi(h^{-2}\mu_1/\mu_2)\;\thi(z_1z_2)}
{\thi(h^{-2}z_1z_2\>\mu_1/\mu_2)}\,\times\,\frac{\thi(z_1)}{\thi(z_1)}\;
\frac{\thi(z_2)}{\thi(z_2)}\;\frac{\thi(z_3)}{\thi(z_3)}\;,
\vv.4>
\ee
and $\;\nu_1=\:\mu_1\,,\;\nu_2=h^2\mu_2\,,\;h^{-2}\mu_1/\mu_2=\:\nu_1/\nu_2$\,.
\end{exam}
\vsk.4>
\noindent
Trivial fractions from~\Ref{trpr} are not removed in the Examples.

\subsection{Substitution properties of elliptic weight functions}
\label{sec:e-sub}

Recall
\vvn.1>
that for \,$I\?\in \Il$ \,we use the notation
\;$\bigcup_{a=1}^k I_a=\:\{\>i^{(k)}_1\!\lsym<i^{(k)}_{\la^{(k)}}\}$\,.
For a function \,$f(\ttt,\zz,h)$\,, we denote by \,$f(\zz_I,\zz,h)$
\,the result of the substitution \;$t^{(k)}_a\!=z_{i^{(k)}_a}$
\,for all \,$k=1\lc N$, \,$a=1\lc\la^{(k)}$.

\vsk.2>
For any \,$\si\in S_n$, \,we define the {\it combinatorial\/}
partial ordering \;$\leq_{\:\si}$ \,on \,$\Il$\,. For \,$I,J\in\Il$\,, \,let
\vvn.2>
\be
\si^{-1}\bigl(\>\tbigcup_{m=1}^k I_m\bigr)\,=\,
\{\>a_1^{(k)}\lsym<a_{\la^{(k)}}^{(k)}\}\,,\qquad
\si^{-1}\bigl(\>\tbigcup_{\ell=1}^k J_\ell\bigr)\,=\,
\{\>b_1^{(k)}\lsym<b_{\la^{(k)}}^{(k)}\}\,,
\vv.2>
\ee
for \,$k=1\lc N-1$\,. We say that \,$J\leq_{\:\si}\?I$ \,if
\vvn.1>
\;$b_i^{(k)}\leq a_i^{(k)}$ \,for all \,$k=1\lc N-1$, \;$i=1\lc\la^{(k)}$.
Notice that \,$J\leq_{\:\si}\?I$ \,if and only if
\,$\si^{-1}(J)\leq_{\>\id}\<\si^{-1}(I)$\,.

\vsk.2>
Set
\vvn-.6>
\beq
\label{Ela}
E^{\>\on{ell}}_\bla(\ttt,h)\,=\,\prod_{k=1}^{N-1}\,\prod_{a=1}^{\la^{(k)}}\,
\prod_{b=1}^{\la^{(k)}}\,\thi(h\:t^{(k)}_b\?/{t^{(k)}_a})
\vv-.4>
\eeq
and
\vvn.2>
\beq
\label{Wti}
\Wt^{\>\on{ell}}_{\si\<,\>I}(\ttt,\zz,h,\mub)\,=\,
\frac{W^{\>\on{ell}}_{\si\<,\>I}(\ttt,\zz,h,\mub)}
{E^{\>\on{ell}}_\bla(\ttt,h)}\;.
\vv.5>
\eeq

The proofs of the following three lemmas are straightforward modifications of
the proofs in \cite[Sect. 6.1]{RTV2}. We indicate main steps of the proof after
formulating the lemmas.

\begin{lem}
\label{lem:tria}
We have \;$\Wt^{\>\on{ell}}_{\si\<,\>I}(\zz_J,\zz,h,\mub)\:=\:0$
\,unless \;$J\leq_{\:\si}\?I$\,.
\end{lem}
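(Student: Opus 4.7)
The plan is to reduce to the case $\si=\id$ first, and then analyze the iterated symmetrization term by term.
By definition \eqref{Wsi}, substituting $\ttt=\zz_J$ in $W^{\>\on{ell}}_{\si\<,\>I}$ amounts to evaluating $W^{\>\on{ell}}_{\si^{-1}(I)}$ in the variables $w_i:=z_{\si(i)}$ at $t^{(k)}_a=w_{\si^{-1}(j^{(k)}_a)}$, where $j^{(k)}_1\!\lsym<j^{(k)}_{\la^{(k)}}$ enumerate $\bigcup_{m=1}^k J_m$. The unordered multiset $\{\si^{-1}(j^{(k)}_a)\}_{a=1}^{\la^{(k)}}$ coincides with $\si^{-1}(J^{(k)})$, whose increasing enumeration is $b^{(k)}_1\!\lsym<b^{(k)}_{\la^{(k)}}$. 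Since $W^{\>\on{ell}}_{\si^{-1}(I)}$ is symmetric in each block $t^{(k)}_1\lc t^{(k)}_{\la^{(k)}}$ by \eqref{WI}, one may reorder the substitution to $t^{(k)}_a=w_{b^{(k)}_a}$, identifying the left-hand side with a $\zz_{J'}$-type evaluation of $W^{\>\on{ell}}_{I'}$ for $I':=\si^{-1}(I)$, $J':=\si^{-1}(J)$. Since $J\leq_{\:\si}I$ iff $J'\leq_{\:\id}I'$, it suffices to treat $\si=\id$.

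Assume $\si=\id$. Expanding the iterated symmetrization in \eqref{WI} yields a sum indexed by tuples $\tau=(\tau^{(1)}\lc\tau^{(N-1)})$ with $\tau^{(k)}\in S_{\la^{(k)}}$; with the convention $\tau^{(N)}=\id$, the $\tau$-summand assigns to the slot $(k,a)$ the value $z_{\phi^{(k)}(a)}$ through the bijection $\phi^{(k)}:\{1\lc\la^{(k)}\}\to J^{(k)}$, $a\mapsto j^{(k)}_{\tau^{(k)}(a)}$. Inspecting the three cases of $\psi^{\>\on{ell}}_{I,k,a,c}$ in \eqref{psi} shows that, at generic $h,\mub,\zz$, the only factor that can vanish after substitution is $\thi(t^{(k+1)}_c/t^{(k)}_a)$ (arising when $i^{(k+1)}_c\<>i^{(k)}_a$), which becomes $\thi(z_{\phi^{(k+1)}(c)}/z_{\phi^{(k)}(a)})$ and vanishes iff $\phi^{(k+1)}(c)=\phi^{(k)}(a)$. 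The $\mu$-shifted ``diagonal'' factors, the factors $\thi(ht^{(k+1)}_c/t^{(k)}_a)$ at $i^{(k+1)}_c\<<i^{(k)}_a$, and the antisymmetrization fractions $\thi(ht^{(k)}_b/t^{(k)}_a)/\thi(t^{(k)}_b/t^{(k)}_a)$ are generically regular and nonzero (the last because each $\phi^{(k)}$ is a bijection, so $\phi^{(k)}(a)\ne\phi^{(k)}(b)$ for $a\ne b$). Hence a $\tau$-summand survives only when $i^{(k+1)}_c\leq i^{(k)}_a$ for every triple $(k,a,c)$ with $\phi^{(k+1)}(c)=\phi^{(k)}(a)$.

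From a surviving $\tau$ I extract bijections $f_k:J^{(k)}\to I^{(k)}$ by $f_k(\phi^{(k)}(a))=i^{(k)}_a$; the conventions $\phi^{(N)}=\id$ and $i^{(N)}_a=a$ give $f_N=\id$. For $y\in J^{(k)}\subset J^{(k+1)}$ the survival condition translates into $f_{k+1}(y)\leq f_k(y)$, and iterating from $k$ up to $N$ yields $f_k(y)\geq f_N(y)=y$ for every $y\in J^{(k)}$. A standard counting argument then forces $J\leq_{\:\id}I$: if $j^{(k)}_m\<>i^{(k)}_m$ for some $m$, the $\la^{(k)}-m+1$ elements $j^{(k)}_m\lc j^{(k)}_{\la^{(k)}}$ have $f_k$-images all strictly greater than $i^{(k)}_m$, hence lie in $\{i^{(k)}_{m+1}\lc i^{(k)}_{\la^{(k)}}\}$, a set of size only $\la^{(k)}-m$, contradicting injectivity. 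Since $E^{\>\on{ell}}_\bla(\zz_J,h)$ is generically nonzero, the vanishing transfers from $W^{\>\on{ell}}$ to $\Wt^{\>\on{ell}}$. The main technical subtlety is the reduction in the first paragraph: one must carefully distinguish the sorted enumeration $j^{(k)}_a$ of $J^{(k)}$ (entering $\zz_J$) from the enumeration $b^{(k)}_a$ of $\si^{-1}(J^{(k)})$ (entering $\leq_{\:\si}$), and then exploit the block-symmetry of $W^{\>\on{ell}}_{I'}$ to bridge them.
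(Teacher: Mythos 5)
Your proposal is correct and follows essentially the same route as the paper's proof: reduce to $\si=\id$ using the definition \Ref{Wsi} (together with the block-symmetry of the weight function), then inspect each term of the iterated symmetrization at $\ttt=\zz_J$ and observe that it acquires a vanishing factor $\thi(1)$ unless $J\leq_{\:\id}I$, while $E^{\>\on{ell}}_\bla(\zz_J,h)$ does not vanish. The only difference is that you spell out explicitly (via the bijections $f_k$ and the counting argument) the combinatorial step that the paper leaves to inspection of formula \Ref{UI} and to the reference to \cite[Sect.~6.1]{RTV2}.
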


Define
\beq
\label{PIE}
P^{\>\on{ell}}_{\si\<,\>I}(\zz,h)\,=
\,\prod_{k<l}\,\prod_{\si(a)\in I_k}\!
\biggl(\>\prod_{\satop{\si(b)\in I_l}{b<a}}\!\thi(hz_{\si(b)}/z_{\si(a)})\!
\prod_{\satop{\si(b)\in I_l}{b>a}}\!\thi(z_{\si(b)}/z_{\si(a)})\biggr).
\eeq

\begin{lem}
\label{lem:expec}
We have \;$\Wt^{\>\on{ell}}_{\si\<,\>I}(\zz_I,\zz,h,\mub)\:=\:
P^{\>\on{ell}}_{\si\<,\>I}(\zz,h)$\,.
\end{lem}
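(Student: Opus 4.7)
The plan is to reduce to $\si = \id$ and then evaluate the symmetrization on the antidiagonal $\ttt = \zz_I$. For the reduction, observe that by (\ref{Wsi}) we have $\Wt^{\>\on{ell}}_{\si, I}(\ttt, \zz, h, \mub) = \Wt^{\>\on{ell}}_{\si^{-1}(I)}(\ttt, z_{\si(1)}, \dots, z_{\si(n)}, h, \mub)$, and both $W^{\>\on{ell}}_I$ and $E^{\>\on{ell}}_\bla$ are symmetric in $t^{(k)}_1, \dots, t^{(k)}_{\la^{(k)}}$ for each $k$, so the substitution $t^{(k)}_a = z_{i^{(k)}_a}$ depends only on the multiset $\bigcup_m I_m$. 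A direct check that $P^{\>\on{ell}}_{\id,\si^{-1}(I)}(z_{\si(1)},\dots,z_{\si(n)},h) = P^{\>\on{ell}}_{\si,I}(\zz,h)$ then reduces the claim to $\Wt^{\>\on{ell}}_I(\zz_I, \zz, h, \mub) = P^{\>\on{ell}}_{\id, I}(\zz, h)$.

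Next I would expand $\Sym_{t^{(1)}} \cdots \Sym_{t^{(N-1)}} U^{\>\on{ell}}_I$ as a sum indexed by tuples $(\tau_1, \dots, \tau_{N-1})$ with $\tau_k \in S_{\la^{(k)}}$, under the substitution $t^{(k)}_a \mapsto z_{i^{(k)}_{\tau_k(a)}}$. I claim only the all-identity tuple contributes, by downward induction on the largest $k$ with $\tau_k \neq \id$. Taking the smallest $a$ with $\tau_k(a) > a$ and setting $b = \tau_k(a)$, let $c(k,b)$ be the unique index with $i^{(k+1)}_{c(k,b)} = i^{(k)}_b$. Since $i^{(k)}_b > i^{(k)}_a$, the third case of (\ref{psi}) gives $\psi^{\>\on{ell}}_{I,k,a,c(k,b)} = \thi(t^{(k+1)}_{c(k,b)}/t^{(k)}_a)$, which under the substitution (with $\tau_{k+1} = \id$ by maximality) becomes $\thi(z_{i^{(k)}_b}/z_{i^{(k)}_b}) = 0$.

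For the surviving identity term, the middle case of (\ref{psi}) at $c = c(k,a)$ evaluates to $1$ (numerator equals denominator at $t^{(k+1)}_{c(k,a)} = t^{(k)}_a$). Each $\psi^{\>\on{ell}}_{I,k,a,c}$ with $i^{(k+1)}_c = i^{(k)}_b$, $b \neq a$, combined with the companion factor $\thi(ht^{(k)}_b/t^{(k)}_a)/\thi(t^{(k)}_b/t^{(k)}_a)$ from (\ref{UI}) (for $a < b$), produces after substitution the paired factors $\thi(hz_{i^{(k)}_b}/z_{i^{(k)}_a})\thi(hz_{i^{(k)}_a}/z_{i^{(k)}_b})$ for each $a < b$; these cancel exactly against the off-diagonal $a \neq b$ part of $E^{\>\on{ell}}_\bla(\zz_I, h)$, while the prefactor $\bigl(\thi(h)\bigr)^{\la^{\{1\}}}$ cancels its diagonal $a = b$ part.

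The remaining factors are the $\psi^{\>\on{ell}}_{I,k,a,c}$ with $i^{(k+1)}_c \in I_{k+1}$, equal to $\thi(hz_{i^{(k+1)}_c}/z_{i^{(k)}_a})$ when $i^{(k+1)}_c < i^{(k)}_a$ and $\thi(z_{i^{(k+1)}_c}/z_{i^{(k)}_a})$ when $i^{(k+1)}_c > i^{(k)}_a$. Reindexing by $l = k+1$ and $\alpha = i^{(k+1)}_c \in I_l$, and using $\{i^{(k)}_a\}_a = I_1 \cup \dots \cup I_k$, these assemble into $\prod_{k<l}\prod_{\beta \in I_k,\,\alpha \in I_l}[\thi(hz_\alpha/z_\beta)$ if $\alpha<\beta$, $\thi(z_\alpha/z_\beta)$ otherwise$]$, which equals $P^{\>\on{ell}}_{\id, I}(\zz, h)$ in (\ref{PIE}). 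The main obstacle is the bookkeeping in the third step: correctly partitioning the indices $c$ in (\ref{UI}) into those with $i^{(k+1)}_c \in I_1 \cup \dots \cup I_k$ (which cancel together with the denominator (\ref{Ela})) and those with $i^{(k+1)}_c \in I_{k+1}$ (which survive and combinatorially rearrange into (\ref{PIE})).
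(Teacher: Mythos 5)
Your proposal is correct and follows essentially the same route as the paper's (sketched) proof: reduce to $\si=\id$ via \Ref{Wsi} and the $S_{\la^{(k)}}$-symmetry, observe that at $\ttt=\zz_I$ every non-identity term of the symmetrization acquires a factor $\thi(1)=0$, and check that the surviving term $U^{\>\on{ell}}_I(\zz_I,\zz,h,\mub)$ divided by $E^{\>\on{ell}}_\bla(\zz_I,h)$ equals $P^{\>\on{ell}}_{\id,I}(\zz,h)$. You simply carry out in detail the bookkeeping that the paper delegates to ``inspection of formula \Ref{UI}'' and to the analogous arguments in \cite[Sect.~6.1]{RTV2}, and your cancellation analysis (middle-case factors giving $1$, the pairs $\thi(hz_{i^{(k)}_b}/z_{i^{(k)}_a})\thi(hz_{i^{(k)}_a}/z_{i^{(k)}_b})$ cancelling $E^{\>\on{ell}}_\bla$, the remaining factors reassembling into \Ref{PIE}) is accurate.
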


Recall the function \,$\psi_I(h,\mub)$\,, see \Ref{psI}.
For $J\in\Il$\,, denote
\vvn.3>
\beq
\label{evert}
e^{\on{ell},\vert}_{\si\<,\>J\<,-}(\zz,h)\,=\,
\prod_{k<l}\,\prod_{\si(a)\in J_k}\<\prod_{\satop{\si(b)\in J_l}
{b<a}}\!\thi(hz_{\si(b)}/z_{\si(a)})\,.\kern-.6em
\eeq

\begin{lem}
\label{lem:div_e}
For all \;$I,J\in\Il$\,, the function
$\psi_I(h,\mub)\>\Wt^{\>\on{ell}}_{\si\<,\>I}(\zz_J,\zz,h,\mub)\:
/e^{\on{ell},\vert}_{\si\<,\>J\<,-}(\zz,h)$
\vvn.1>
\,lifts to a regular function on the universal cover of
\;$(\C^\times)^{\:n+1+N}$.
\end{lem}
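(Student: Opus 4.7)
The plan is to rewrite
$$F \;=\; \frac{\psi_I(h,\mub)\,\Wt^{\>\on{ell}}_{\si,I}(\zz_J,\zz,h,\mub)}{e^{\on{ell},\vert}_{\si,J,-}(\zz,h)} \;=\; \frac{\psi_I(h,\mub)\,W^{\>\on{ell}}_{\si,I}(\zz_J,\zz,h,\mub)}{E^{\>\on{ell}}_\bla(\zz_J,h)\,e^{\on{ell},\vert}_{\si,J,-}(\zz,h)}$$
and to verify that every theta-zero in the denominator is killed by a zero of at least the same multiplicity in the numerator. By formula \Ref{Wsi} and the evident equivariance of $E^{\>\on{ell}}_\bla$, $\psi_I$, and $e^{\on{ell},\vert}_{\si,J,-}$ under simultaneous relabelling of $(\si, I, J, \zz)$, it suffices to handle the case $\si = \id$; the general case then follows by relabelling. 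I first dispatch the easy candidate poles. The $\mub$-dependent denominators of $W^{\>\on{ell}}_I$ come entirely from the $i^{(k+1)}_c = i^{(k)}_a$ branch of \Ref{psi} and their product is by definition $\psi_I(h,\mub)$, so the prefactor $\psi_I$ kills them. The intra-level factors $\thi(t^{(k)}_b/t^{(k)}_a)^{-1}$ in $U^{\>\on{ell}}_I$ produce only apparent poles (per the Remark in Section \ref{sec:ellwe}); moreover, since the indices $i^{(k)}_a$ in $\zz_J$ are distinct, the substitution avoids all diagonals. Finally, the diagonal factor $\thi(h)^{\la^{\{1\}}}$ of $E^{\>\on{ell}}_\bla(\zz_J,h)$ cancels with the prefactor $\thi(h)^{\la^{\{1\}}}$ in \Ref{WI}.

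After these cancellations the only remaining candidate pole loci are the off-diagonal zero loci $\{h z_{i^{(k)}_b}/z_{i^{(k)}_a} \in q^{\Z}\}$ (with $a \neq b$) of $E^{\>\on{ell}}_\bla(\zz_J,h)$, and the loci $\{h z_b/z_a \in q^{\Z}\}$ coming from the pairs $(a,b)$ indexing $e^{\on{ell},\vert}_{\id,J,-}(\zz,h)$ in \Ref{evert}. If $J \not\leq_{\id} I$ then Lemma \ref{lem:tria} forces $\Wt^{\>\on{ell}}_{\id,I}(\zz_J,\zz,h,\mub) = 0$ and $F$ is identically zero; thus I may assume $J \leq_{\id} I$. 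For such $J$ I expand
$$W^{\>\on{ell}}_I(\zz_J,\zz,h,\mub) \;=\; \thi(h)^{\la^{\{1\}}}\sum_{\tau\in\prod_{k=1}^{N-1} S_{\la^{(k)}}}\tau\bigl(U^{\>\on{ell}}_I\bigr)\Big|_{t^{(k)}_a = z_{i^{(k)}_a}}$$
and examine vanishing on each candidate locus termwise. The source of numerator zeros is the two inequality branches of \Ref{psi}: for $i^{(k+1)}_c < i^{(k)}_a$ one obtains a factor $\thi(h z_{i^{(k+1)}_c}/z_{i^{(k)}_a})$, and for $i^{(k+1)}_c > i^{(k)}_a$ a factor $\thi(z_{i^{(k+1)}_c}/z_{i^{(k)}_a})$, supplemented by the intra-level factors $\thi(h t^{(k)}_b/t^{(k)}_a)$ from \Ref{UI}.

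The main obstacle is precisely this termwise check. Many summands $\tau(U^{\>\on{ell}}_I)|_{\zz_J}$ already vanish on a given candidate locus by elementary index-comparison, but not all do; the surviving summands must be organized into cancellation pairs, and the cancellation within each pair is effected by one of the elliptic theta identities \Ref{two} or \Ref{three}. Identifying the correct pairing of surviving $\tau$'s, and verifying that the total multiplicity of numerator vanishing matches that of $E^{\>\on{ell}}_\bla(\zz_J,h) \cdot e^{\on{ell},\vert}_{\id,J,-}(\zz,h)$ at each locus, is the genuine combinatorial work. This is a direct elliptic transcription of the trigonometric argument of \cite[Sect.~6.1]{RTV2}, with \Ref{two} and \Ref{three} playing the role of the corresponding trigonometric identities, and with Lemmas \ref{lem:tria} and \ref{lem:expec} organizing the residual combinatorics. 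Once all candidate poles are cleared, $F$ is holomorphic in each of its variables on the universal cover $\C^{\,n+1+N}$ of $(\C^\times)^{n+1+N}$ (which accommodates the quasi-periodicity of $\thi$), and the lemma follows.
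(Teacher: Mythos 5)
Your setup matches the paper's: reduce to $\si=\id$ via \Ref{Wsi}, clear the $\mub$-dependent denominators with the prefactor $\psi_I(h,\mub)$, cancel the $\thi(h)^{\la^{\{1\}}}$ against the diagonal part of $E^{\>\on{ell}}_\bla(\zz_J,h)$, and dispose of $J\not\leq_{\>\id}I$ by Lemma \ref{lem:tria}. But at exactly the point where the lemma has content you stop: you write that the surviving summands "must be organized into cancellation pairs" and that verifying the vanishing "is the genuine combinatorial work," and you then do not perform that work, deferring to an unspecified "elliptic transcription" of \cite[Sect.~6.1]{RTV2}. A proof of Lemma \ref{lem:div_e} has to actually establish that the off-diagonal factors of $E^{\>\on{ell}}_\bla(\zz_J,h)$ and the factors $\thi(hz_b/z_a)$ of $e^{\on{ell},\vert}_{\id,J,-}(\zz,h)$ divide $\psi_I\,W^{\>\on{ell}}_{\id,I}(\zz_J,\zz,h,\mub)$; naming this as the hard part and skipping it leaves the lemma unproved.

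Moreover, the route you sketch for that missing step is not the one that works, and it misrepresents the structure of the argument. The paper's proof (like its trigonometric prototype in \cite{RTV2}) is \emph{termwise}: by inspection of \Ref{UI} and \Ref{psi}, every potentially nonvanishing summand $\tau(U^{\>\on{ell}}_I)$ evaluated at $\TT=\zz_J$ (for $J\le_{\>\id}I$) already contains, literally as factors, all the theta factors required by the denominator --- the intra-level factors $\thi(h\,t^{(k)}_b/t^{(k)}_a)$ together with the factors $\thi(h\,t^{(k+1)}_c/t^{(k)}_a)$ from the branch $i^{(k+1)}_c<i^{(k)}_a$ of \Ref{psi} supply them --- so the quotient of each summand is again a product of theta functions and no cancellation between different $\tau$'s is needed. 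The identities \Ref{two} and \Ref{three} play no role here; in this paper they are used only in the proof of the exchange relations of Theorem \ref{thm:recur}. So your proposed pairing of summands with cancellations governed by \Ref{two}/\Ref{three}, and the accompanying "multiplicity matching" at the pole loci (only a lower bound on numerator vanishing is needed anyway), is both unexecuted and misdirected; the correct and much simpler mechanism is factor-by-factor divisibility within each individual term of the symmetrization sum.
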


The universal cover in the last lemma is due to presence of the square root
\,$x^{1/2}$ in formula \Ref{sqr} for \,$\thi(x)$\,.

\begin{proof}[Proof of Lemmas \ref{lem:tria}--\ref{lem:div_e}]
By formula \Ref{Wsi}\:, it suffices to verify the claims for \,$\si$
\vvn.1>
\,being the identity permutation. The function
\;$W^{\>\on{ell}}_{\id\<,\>I}(\ttt,\zz,h,\mub)$ is the symmetrization
of product \Ref{UI}. By inspection of formula \Ref{UI}, each term in
the symmetrization sum acquire a zero factor at \,$\TT=\zz_J$ \,unless
\,$J\le_{\>\id} I$\,, and \,$E^{\>\on{ell}}_\bla(\zz_J,h)\ne0$\,.
For \,$J=I$, the only term of the sum for
\;$W^{\>\on{ell}}_{\id\<,\>I}(\ttt,\zz,h,\mub)$ that does not vanish
at \,$\TT=\zz_J$ \,corresponds to the identity element, and equals
$U^{\>\on{ell}}_I(\ttt,\zz,h,\mub)$\,. Then
\vvn.1>
\;$\Wt^{\>\on{ell}}_{\id\<\>,\>I}(\zz_I,\zz,h,\mub)\:=\:
U^{\>\on{ell}}_{\id\<\>,\>I}(\zz_I,\zz,h,\mub)/E^{\>\on{ell}}_\bla(\zz_I,h)
\:=\:P^{\>\on{ell}}_{\id\<\>,\>I}(\zz,h)$\,. Finally for \,$J<_{\>\id} I$\,,
inspection of \Ref{UI} shows that each potentially nonzero term of the sum
for \;$W^{\>\on{ell}}_{\id\<,\>I}(\zz_J,\zz,h,\mub)$ is divisible by
\,$E^{\>\on{ell}}_\bla(\zz_I,h)\>e^{\on{ell},\vert}_{\si\<,\>J\<,-}(\zz,h)$\,.
\end{proof}

\subsection{Orthogonality}
Let \,$\si_0\in$ be the longest permutation in \,$S_n$\>. \>Denote
\;$h^\bla\mub^{-1}\<=(h^{\la_1}\<\mu_1^{-1}\!\lc h^{\la_N}\<\<\mu_N)^{-1}$\>.
Let \,$\tti$ \,be an additional set of variables similar to \,$\TT$\,.
Define a function \,$\Xi(\TT,\tti,\zz,h,\mub)$\,,
\vvn.3>
\beq
\label{Xi}
\Xi(\TT,\tti,\zz,h,\mub)\,=\>
\sum_{I\in\Il}\,\Wt^{\>\on{ell}}_{\id\:,\:I}(\TT,\zz,h,\mub)\,
\Wt^{\>\on{ell}}_{\si_0,\:I}(\tti,\zz,h,h^\bla\mub^{-1})\,.
\eeq

\begin{prop}
\label{Xiz}
The function \;$\Xi(\TT,\tti,\zz,h,\mub)$ is symmetric in \,$\zz$\,.
\end{prop}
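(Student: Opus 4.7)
The plan is to verify invariance of $\Xi$ under each adjacent transposition $s_{i,i+1}$ of the variables $\zz$; since these generate $S_n$, the full symmetry will follow. Fix $i$ and set $\zz' = s_{i,i+1}(\zz)$. The first step is to translate the substitution $\zz \mapsto \zz'$ on each factor into an action on the index $I$. From \Ref{Wsi} one gets $\Wt^{\on{ell}}_{\id,I}(\TT,\zz',h,\mub) = \Wt^{\on{ell}}_{s_{i,i+1},\:s_{i,i+1}(I)}(\TT,\zz,h,\mub)$, and using the identity $\si_0^{-1}s_{i,i+1}\si_0 = s_{n-i,n-i+1}$, also $\Wt^{\on{ell}}_{\si_0,I}(\tti,\zz',h,h^\bla\mub^{-1}) = \Wt^{\on{ell}}_{\si_0 s_{n-i,n-i+1},\:s_{i,i+1}(I)}(\tti,\zz,h,h^\bla\mub^{-1})$. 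Applying Theorem \ref{thm:recur} to each right-hand side expresses it as a linear combination of $\Wt^{\on{ell}}_{\id,J}(\TT,\zz,h,\mub)$, respectively $\Wt^{\on{ell}}_{\si_0,J}(\tti,\zz,h,h^\bla\mub^{-1})$, for $J\in\{I,s_{i,i+1}(I)\}$, with Felder R-matrix entries from \Ref{Rm}: at argument $z_i/z_{i+1}$ with dynamical parameter $\mub^{(i)}_I$ for the $\id$-factor, and at $z_{i+1}/z_i$ with dynamical parameter $(h^\bla\mub^{-1})^{(n-i)}_{\si_0(s_{i,i+1}(I))}$ for the $\si_0$-factor.

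Next, I would group the summands of $\Xi(\TT,\tti,\zz')$ into orbits of the involution $I\mapsto s_{i,i+1}(I)$ on $\Il$. Singleton orbits (those where $i$ and $i+1$ lie in the same part of $I$) are individually invariant, by the $a=b$ clause of Theorem \ref{thm:recur}. For a two-element orbit $\{I,s_{i,i+1}(I)\}$ with $i\in I_a$, $i+1\in I_b$, $a\ne b$, the paired sum in $\Xi(\TT,\tti,\zz')$ unfolds into a bilinear form in the four products $\Wt^{\on{ell}}_{\id,J'}(\TT,\zz,h,\mub)\,\Wt^{\on{ell}}_{\si_0,J''}(\tti,\zz,h,h^\bla\mub^{-1})$ with $J',J''\in\{I,s_{i,i+1}(I)\}$, whose coefficients are sums of two products of R-matrix entries. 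Matching to the corresponding pair of summands in $\Xi(\TT,\tti,\zz)$ reduces to showing that the two diagonal coefficients (those indexed by $J'=J''$) equal $1$ while the two off-diagonal coefficients vanish.

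The main obstacle is this $2\times 2$ system of identities, a dynamical unitarity of Felder's R-matrix tailored to the shift $\mub\to h^\bla\mub^{-1}$. The crucial computational observation is that the two relevant dynamical parameters satisfy
\[
\frac{(\mub^{(i)}_I)_a}{(\mub^{(i)}_I)_b}\cdot\frac{((h^\bla\mub^{-1})^{(n-i)}_{\si_0(s_{i,i+1}(I))})_a}{((h^\bla\mub^{-1})^{(n-i)}_{\si_0(s_{i,i+1}(I))})_b}\,=\,1,
\]
so the second R-matrix is evaluated at the inverse of the first one's dynamical ratio, exactly balancing the inverted spectral argument $z_{i+1}/z_i=(z_i/z_{i+1})^{-1}$. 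Substituting the entries from \Ref{Rm} reduces the four required identities to two independent theta-function identities (the remaining two follow by the $a\leftrightarrow b$ symmetry of the setup), both consequences of the elementary relations \Ref{one} for $\thi$. Verifying these by direct manipulation closes out the two-element orbit case, and together with the singleton-orbit case completes the proof that $\Xi(\TT,\tti,\zz') = \Xi(\TT,\tti,\zz)$.
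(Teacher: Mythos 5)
Your argument is essentially the paper's own proof (which simply cites Theorem \ref{thm:recur}) carried out in detail: unfold the swap $z_i\leftrightarrow z_{i+1}$ on both factors via \Ref{Wsi} and Theorem \ref{thm:recur}, pair the summands over orbits of $I\mapsto s_{i,i+1}(I)$, and check a dynamical unitarity of the $2\times2$ block; your key observation that the shift $\mub\to h^\bla\mub^{-1}$ together with $\si_0$ makes the two dynamical ratios mutually inverse is correct, and the resulting coefficient identities do hold. One small correction: only the off-diagonal (vanishing) coefficients follow from the oddness relation in \Ref{one}; the two diagonal coefficients equal $1$ by the three-term Weierstrass addition formula, i.e.\ a specialization of identity \Ref{two}, not by \Ref{one} alone.
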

\begin{proof}
The statement follows from Theorem \ref{thm:recur}.
\end{proof}

\begin{prop}
\label{XiIJp}
For \,$I\:,J\in\Il$, we have
\vvn.2>
\beq
\label{XiIJ}
\Xi(\zz_I,\zz_J,\zz,h,\mub)\,=\,
R^{\>\on{ell}}(\zz_I)\,Q^{\>\on{ell}}(\zz_I,h)\,\dl_{I\<,\:J}\,,
\eeq
where
\beq
\label{RQ}
R^{\>\on{ell}}(\zz_I)\,=\,
\prod_{k<l}\,\prod_{a\in I_k}\,\prod_{b\in I_l}\,\thi(z_b/z_a)\,,
\qquad
Q^{\>\on{ell}}(\zz_I,h)\,=\,
\prod_{k<l}\,\prod_{a\in I_k}\,\prod_{b\in I_l}\,\thi(hz_b/z_a)\,.\kern-2em
\vv.3>
\eeq
\end{prop}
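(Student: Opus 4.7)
The plan is to combine the triangularity Lemmas~\ref{lem:tria}--\ref{lem:div_e} with the $\zz$-symmetry of $\Xi$ from Proposition~\ref{Xiz}. Expanding the definition,
\begin{equation*}
\Xi(\zz_I,\zz_J,\zz,h,\mub)\,=\,\sum_{K\in\Il}\,
\Wt^{\>\on{ell}}_{\id\:,\:K}(\zz_I,\zz,h,\mub)\,
\Wt^{\>\on{ell}}_{\si_0,\:K}(\zz_J,\zz,h,h^\bla\mub^{-1})\,,
\end{equation*}
Lemma~\ref{lem:tria} forces the summand to vanish unless $I\leq_\id K$ and $J\leq_{\si_0}\<K$. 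First I would unpack the definition of $\leq_{\si_0}$ using the fact that $\si_0$ reverses the order of increasing sequences: writing $\si_0^{-1}(\bigcup_m L_m)=\{n\<+\<1-a:a\in\bigcup_m L_m\}$ for $L=J,K$ and re-sorting, one finds $J\leq_{\si_0}\<K\iff K\leq_\id J$. Hence the sum is indexed by $K\in[I,J]_{\leq_\id}$, and $\Xi(\zz_I,\zz_J,\zz,h,\mub)=0$ whenever $I\not\leq_\id J$.

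Next I would handle the diagonal case $I=J$, where the interval $[I,I]$ reduces to $K=I$. By Lemma~\ref{lem:expec},
\begin{equation*}
\Xi(\zz_I,\zz_I,\zz,h,\mub)\,=\,P^{\>\on{ell}}_{\id\:,\:I}(\zz,h)\,
P^{\>\on{ell}}_{\si_0,\:I}(\zz,h),
\end{equation*}
both factors being $\mub$-independent by~\Ref{PIE}. The substitution $a'=\si_0(a),\,b'=\si_0(b)$ in the formula for $P^{\>\on{ell}}_{\si_0,\:I}$ converts the condition $b<a$ into $b'>a'$, so $P^{\>\on{ell}}_{\si_0,\:I}$ contributes $\thi(hz_{b'}/z_{a'})$ when $a'\in I_k,\,b'\in I_l,\,k<l,\,b'>a'$ and $\thi(z_{b'}/z_{a'})$ when $b'<a'$, exactly opposite to $P^{\>\on{ell}}_{\id\:,\:I}$. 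Multiplying, each pair $a\in I_k,\,b\in I_l$ with $k<l$ supplies one factor $\thi(z_b/z_a)$ and one factor $\thi(hz_b/z_a)$, producing $R^{\>\on{ell}}(\zz_I)\,Q^{\>\on{ell}}(\zz_I,h)$.

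It remains to treat the strict off-diagonal case $I<_\id J$, where the interval $[I,J]$ contains several contributions that must cancel. I would induct on the rank of this interval. For the base case --- $J$ covering $I$ via an elementary exchange of one element between two blocks at a single filtration level --- only $K=I$ and $K=J$ survive, and the divisibility structure from Lemma~\ref{lem:div_e} together with Lemma~\ref{lem:expec} should reduce the two-term cancellation to the three-term theta identity~\Ref{two}, generalizing the direct $N=n=2$ computation. For the inductive step, Theorem~\ref{thm:recur} expresses $\Wt^{\>\on{ell}}_{\si,\:K}$ at adjacent $\si$'s as $\Rc$-matrix combinations, which together with the $\zz$-symmetry from Proposition~\ref{Xiz} should relate $\Xi(\zz_I,\zz_J,\zz,h,\mub)$ to analogous quantities over strictly shorter intervals.

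The hard part will be orchestrating these cancellations along the entire chain from $I$ to $J$; the clean two-term structure of the base case does not propagate obviously to longer intervals. A potentially more transparent alternative is to recognize $\Xi$ as the kernel of a multidimensional contour-integral pairing (as in~\cite{TV2,RTV2,FRV}), so that the substitution $\TT=\zz_I$ becomes a residue extraction at a specific pole of the kernel and the delta-function structure of~\Ref{XiIJ} becomes manifest.
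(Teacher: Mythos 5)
Your reduction of the support of the sum is correct: the two triangularity conditions from Lemma~\ref{lem:tria} do give $I\leq_{\>\id}K$ and $K\leq_{\>\id}J$ (your identity $J\leq_{\si_0}K\iff K\leq_{\>\id}J$ is right), so $\Xi(\zz_I,\zz_J,\zz,h,\mub)=0$ when $I\not\leq_{\>\id}J$, and your diagonal computation $P^{\>\on{ell}}_{\id\:,\:I}\,P^{\>\on{ell}}_{\si_0,\:I}=R^{\>\on{ell}}(\zz_I)\,Q^{\>\on{ell}}(\zz_I,h)$ is also correct. But the statement is not yet proved: the essential case $I<_{\>\id}J$ with $I\ne J$, where the interval $[I,J]_{\leq_{\id}}$ contains more than one term and a cancellation must occur, is only sketched. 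Your base case ("should reduce to the three-term theta identity~\Ref{two}") is not carried out, and the inductive step via Theorem~\ref{thm:recur} is not formulated as an actual argument --- you yourself note that the two-term structure does not propagate. The contour-integral alternative is likewise only a pointer. So as it stands there is a genuine gap precisely at the off-diagonal vanishing.

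The missing idea is to use Proposition~\ref{Xiz} not inside an induction but as a global equivariance statement that removes the hard case altogether. Since $\Xi$ is symmetric in $\zz$ and each weight function is symmetric in every group $t^{(k)}$, one has $f_{\si(I),\>\si(J)}(\zz)=f_{I\<,\<\>J}(\zzzsi)$ for $f_{I\<,\<\>J}(\zz)=\Xi(\zz_I,\zz_J,\zz,h,\mub)$, and the right-hand side of \Ref{XiIJ} transforms the same way by inspection of \Ref{RQ}. Hence it suffices to verify \Ref{XiIJ} for a single representative $J$ in the $S_n$-orbit, and choosing $J=(\{1\lc\la_1\},\ldots,\{n-\la_N+1\lc n\})$ makes $J$ maximal for $\leq_{\si_0}$ (equivalently minimal for $\leq_{\>\id}$): the condition $J\leq_{\si_0}K$ then forces $K=J$, the sum collapses to one term, and the first factor vanishes unless $I=J$, where Lemma~\ref{lem:expec} gives the stated product. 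No cancellation between different $K$ is ever needed. If you want to salvage your route instead, you would have to actually prove the multi-term cancellations over $[I,J]$, which is substantially harder than the symmetry reduction.
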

\begin{proof}
Denote by \,$f_{I\<,\<\>J}(\zz)$ one of the sides of formula \Ref{XiIJ}.
Then either by Proposition \ref{Xiz} or by formulae \Ref{RQ}, we have
\,$f_{\si(I),\>\si(J)}(\zz)=f_{I\<,\<\>J}(\zzzsi)$ \,for any \,$\si\in S_n$\,.
\vvn.1>
Thus it is enough to verify formula \Ref{XiIJ} for
\vvn.1>
\,$J=(\{1\lc\la_1\}\>,\,\ldots\,,\{n-\la_n+1\lc n\})$\,.
In this case, \,$I\<\le_{\si_0}\!J$ \,for any \,$I\in\Il$\,, and
formula \Ref{XiIJ} follows from Lemmas \ref{lem:tria}, \ref{lem:expec}.
\end{proof}

\begin{thm}
\label{thm orth}
For \,$J\:,K\!\in\Il$, we have
\vvn.4>
\beq
\label{ORT}
\sum_{I\in\Il}\,\frac{\Wt^{\>\on{ell}}_{\id\:,\:J}(\zz_I,\zz,h,\mub)\,
\Wt^{\>\on{ell}}_{\si_0,\:K}(\zz_I,\zz,h,h^\bla\mub^{-1})}
{R^{\>\on{ell}}(\zz_I)\,Q^{\>\on{ell}}(\zz_I,h)}\,=\,\dl_{J,K}\,.
\eeq
\end{thm}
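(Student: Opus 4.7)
The plan is to reinterpret Proposition~\ref{XiIJp} as a matrix identity and then invert it using the triangularity established in Section~\ref{sec:e-sub}. For indices $I,J\in\Il$, I would introduce the square matrices
\[
A_{I,J}\,=\,\Wt^{\>\on{ell}}_{\id\<,\:J}(\zz_I,\zz,h,\mub)\,,\qquad
B_{I,J}\,=\,\Wt^{\>\on{ell}}_{\si_0,\:J}(\zz_I,\zz,h,h^\bla\mub^{-1})\,,
\]
and the diagonal matrix $D$ with entries $D_{I\<,\>I}=R^{\>\on{ell}}(\zz_I)\,Q^{\>\on{ell}}(\zz_I,h)$. Substituting $\TT=\zz_I$ and $\tti=\zz_J$ in formula \Ref{Xi} and invoking Proposition~\ref{XiIJp} yields the compact identity $AB^{\top}\<=D$.

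The orthogonality relation \Ref{ORT} is just the entry-wise expression of the dual identity $B^{\top}D^{-1}A=\on{Id}$. Indeed, once $A$ is known to be invertible, $AB^{\top}\<=D$ rearranges to $B^{\top}\<=A^{-1}D$, hence $B^{\top}D^{-1}A=\on{Id}$; writing out the $(J,K)$ entry and swapping the dummy labels $J\leftrightarrow K$ reproduces \Ref{ORT} exactly. So the whole proof reduces to showing that $A$ (equivalently $B$) is invertible.

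Invertibility of $A$ follows directly from Lemmas~\ref{lem:tria} and~\ref{lem:expec}. Refine the partial order $\leq_{\>\id}$ on $\Il$ to any total ordering. By Lemma~\ref{lem:tria}, $A_{I\<,\>J}=0$ unless $J\le_{\>\id}I$, so $A$ is lower\:-triangular in this ordering; by Lemma~\ref{lem:expec}, its diagonal entries equal $P^{\>\on{ell}}_{\id\<,\>I}(\zz,h)$, which are products of theta functions nonzero for generic $\zz,h$. A parallel argument using the partial order $\leq_{\si_0}$ shows that $B$ is triangular with nonzero diagonal in a different ordering, and therefore also invertible; the identity $AB^{\top}\<=D$ is then solvable as described.

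The step most deserving of care is not the linear algebra, which is purely formal, but the bookkeeping: one must verify that the diagonal matrix $D$ extracted from Proposition~\ref{XiIJp} matches the denominator in \Ref{ORT}, and that the shift $\mub\mapsto h^\bla\mub^{-1}$ appearing in the second weight function is compatible with the roles played by $\Wt^{\>\on{ell}}_{\id,\,\cdot}$ and $\Wt^{\>\on{ell}}_{\si_0,\,\cdot}$ on both sides of the rearrangement. Once these normalizations are aligned, \Ref{ORT} is immediate from the triangular invertibility of $A$ and $B$.
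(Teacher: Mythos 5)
Your proposal is correct and is essentially the paper's own proof: Proposition \ref{XiIJp} is recast as the matrix identity $A\>B^{\top}\<=D$ (the paper's $\Wh\>\Wh'\<=\Qh$) and then inverted to $B^{\top}D^{-1}A=1$, whose entries are exactly \Ref{ORT}; your triangularity argument via Lemmas \ref{lem:tria} and \ref{lem:expec} merely makes explicit the generic invertibility that the paper leaves implicit (it also follows directly from $\det D\neq0$). Only a trivial slip: since $A_{I\<,\:J}=\Wt^{\>\on{ell}}_{\id\:,\:J}(\zz_I,\zz,h,\mub)$, Lemma \ref{lem:tria} gives $A_{I\<,\:J}=0$ unless $I\leq_{\>\id}J$ (not $J\leq_{\>\id}I$), so $A$ is upper- rather than lower-triangular in your ordering, which does not affect the conclusion.
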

\begin{proof}
Consider the matrices $\;\Wh,\,\Wh'\<\<,\>\Qh$ \,with entries
\vvn.4>
\be
\Wh_{I\<,\:J}=\Wt^{\>\on{ell}}_{\id\:,\:J}(\zz_I,\zz,h,\mub)\,,\quad
\Wh'_{K\<,\:I}=\Wt^{\>\on{ell}}_{\si_0,\:K}(\zz_I,\zz,h,h^\bla\mub^{-1})\,,
\quad \Qh_{I\<,\:J}=R^{\>\on{ell}}(\zz_I)\,Q^{\>\on{ell}}(\zz_I,h)\,
\dl_{I\<,\:J}\,.\kern-.4em
\vv.2>
\ee
Formula \Ref{XiIJ} yields a matrix equality $\;\Wh\>\Wh'\<=\:\Qh$\,.
Therefore, $\;\Wh'\>\Qh^{-1}\>\Wh=\:1$\,, which is exactly formula \Ref{ORT}.
\end{proof}

\begin{rem}
Denote \;$\ttt^{-1}\<=\bigl((t^{(1)}_1)^{-1}\!\lc
(t^{(N-1)}_{\la^{(N-1)}})^{-1}\bigr)\,,\;\;
\zz^{-1}\<=(z_1^{-1}\<\lc z_n^{-1})$\,. Notice that all the consideration
is essentially invariant under the simultaneous replacement
\be
h\to h^{-1}\<,\quad \zz\to\zz^{-1}\<,\quad \ttt\to\ttt^{-1}\<,\quad
\mub\to\mub^{-1}\<,
\ee
for instance, \,$\Rc(x,h,\mub)\,=\,\Rc(x^{-1}\<,h^{-1}\<,\mub^{-1})$\,,
\be
W^{\>\on{ell}}_{\si\<,\>I}(\ttt^{-1}\<,\zz^{-1}\<,h^{-1}\<,\mub^{-1})\,=\,
(-1)^{\sum_{k=1}^{N-1} \la^{(k)}\la^{(k+1)}}\,W^{\>\on{ell}}_{\si\<,\>I}(\ttt,\zz,h,\mub)\,.
\ee
\end{rem}

\section{Trigonometric weight functions}
\label{sec:trigweight}
\subsection{Alcoves}
Consider \,$\R^N$ with coordinates $\nu=(\nu_1,\dots,\nu_N)$.
A {\it wall\/} in \,$\R^N$ is a hyperplane defined by an equation of the form
$\nu_j\<-\nu_i=\:m$, \,$1\leq i<j\leq N$, \,$m\in\Z$. Connected components
of the complement in \,$\R^N$ to the union of all walls are called
{\it alcoves}. Let \,$\Dl$ \,be an alcove and \,$\nu\in\Dl$\,.
For $1\leq i<j\leq N$, \,denote $m_{i,j}\<=\>\lfloor\nu_j\<-\nu_i\rfloor\in\Z$,
that is, $0<\nu_j\<-\nu_i\<-m_{i,j}\<<1$. The numbers
\,$(m_{i,j})_{1\leq i<j\leq N}$ \>depend on the alcove \,$\Dl$\,,
but not on \,$\nu\in\Dl$\,; they are called the {\it integer characteristics\/}
of the alcove $\Dl$\,. The {\it anti-dominant\/} alcove is the alcove with zero
integer characteristics.

\vsk.2>
Let $\Dl$ be an alcove with integer characteristics $(m_{i,j})$.
Recall the $\ttt,\zz, h$ variables from Section \ref{sec:ellwe}.
For $I\in\Il$, define the {\it trigonometric weight function\/}
\vvn.2>
\beq
\label{WItr}
W_I^\Dl (\ttt,\zz,h)\,=\,\bigl(1-h\bigr)^{\la^{\{1\}}}\<
\Sym_{\>t^{(1)}} \ldots \Sym_{\>t^{(N-1)}} U^\Dl_I\,,
\eeq
where
\vvn.2>
\be
\Sym_{\>t^{(k)}} f\bigl(t^{(k)}_1\lc t^{(k)}_{\la^{(k)}}\bigr)\,=\!
\sum_{\si\in S_{\la^{(k)}}\!\!}\<
f\bigl(t^{(k)}_{\si(1)}\lc t^{(k)}_{\si(\la^{(k)})}\bigr)\,,
\vv-.1>
\ee
\beq
\label{UItr}
U^\Dl_I\,=\,\prod_{k=1}^{N-1}\,\prod_{a=1}^{\la^{(k)}}\,\biggl(\,
\prod_{c=1}^{\la^{(k+1)}}\,\psi^\Dl_{I,k,a,c}(t^{(k+1)}_c/t^{(k)}_a)\,
\prod_{b=a+1}^{\la^{(k)}}\?\frac{1-ht^{(k)}_b/t^{(k)}_a}
{1-t^{(k)}_b/t^{(k)}_a}\,\biggr)\>,
\vv.5>
\eeq
\beq
\label{psitr}
\psi^\Dl_{I,k,a,c}(x)\,=\left\{\begin{alignedat}3
&\kern1.25em 1-hx\,,&&\text{if} && i^{(k+1)}_c\!<i^{(k)}_a\,,\\[2pt]
&\,x^{-m_{{j(I\<,\>k,\:a)},k+1}}\>,\;\quad &&
\text{if}\;\quad&& i^{(k+1)}_c\!=i^{(k)}_a\,,
\\[2pt]
&\kern1.5em\,1-x\,,&&\text{if} && i^{(k+1)}_c\!>i^{(k)}_a\,.
\end{alignedat}\right.\kern-1.6em
\eeq
where \,${j(I\<,\>k,\:a)}\in\{1,\dots,N\}$ \,is such that
\,$i^{(k)}_a\<\in\<I_{j(I\<,\>k,\:a)}$\,, \,cf.~\Ref{psi}\:.

\vsk.3>
For \,$\si\in S_n$ \>and \,$I\?\in\Il$\>, define the trigonometric weight
function
\vvn.4>
\beq
\label{Wsitr}
W^\Dl_{\si\<,\>I}(\ttt,\zz, h)\,=\,
W^\Dl_{\si^{-1}(I)}(\ttt,z_{\si(1)}\lc z_{\si(n)},h)\,,
\vv.2>
\eeq
where \;$\si^{-1}(I)=\bigl(\si^{-1}(I_1)\lc\si^{-1}(I_N)\bigr)$.
Hence, \,$W^\Dl_I=W^\Dl_{\id,I}$.

\vsk.2>
Define
\vvn-.6>
\beq
\label{trigE}
E(\ttt,h)=
\prod_{k=1}^{N-1}\prod_{a=1}^{\la^{(k)}} \prod_{b=1}^{\la^{(k)}}
\left(1-\frac{ht^{(k)}_b}{t^{(k)}_a}\right)
\vv-.5>
\eeq
and
\vvn-.2>
\beq
\label{Wtilde}
\Wt^\Dl_{\si\<,\>I}(\ttt,\zz,h)=
\frac{W^\Dl_{\si\<,\>I}(\ttt,\zz,h)}{E(\ttt,h)}.
\vv.1>
\eeq

\begin{rem}
The trigonometric weight functions of \cite{RTV2} correspond to
\,$W^\Dl_{\si\<,\>I}(\ttt,\zz,h)$ \,with the anti-dominant alcove \,$\Dl$\,.
\end{rem}

\subsection{Elliptic versus trigonometric weight functions: the $q\to 0$ limit}
\label{qto0}
Let \,$\Dl$ \,be an alcove with integer characteristics \,$(m_{j\<,\:k})$\,,
\,and let \,$\nu\in\Dl$\,. Define
\vvn.2>
\beq
\label{lim nu}
\mu_j\:=\,e^{2\pi i \tau \nu_j}\>, \qquad j=1\lc N\:.
\vv.1>
\eeq
Then for \,$1\leq j<k\leq N$, \,we have
\,$\mu_k/\mu_j=\:q^{\>m_{j,k}+\eps_{j,k}}$,
where \;$\eps_{j\<,\:k}=\nu_k\<-\nu_j\<-m_{j\<,\:k}\in(0,1)$\,.

\vsk.2>
For \,$I\<\in\Il$\,, define
\beq
\label{ellI}
\ell_I\:=\,\#\{\:(a,b)\in\{1\lc n\}^2\ \,|\ \;
a\in I_j\>,\,\,b\in I_k\>,\,\,a>b\,,\,\,j<k\>\}\,.
\vv.2>
\eeq
Equivalently, \;$\ell_I$ \,is the length of the permutation \;$\si_I\<\in S_n$
\>such that
\vvn.1>
\beq
\label{siI}
I_j\>=\,\{\>\si_I(\la^{(j-1)}\?+1)\lsym<\si_I(\la^{(j)})\:\}\,,\qquad j=1\lc N.
\eeq
\begin{thm}
\label{pr4.1}
Let \;$h,\zz,\ttt$ \,be fixed. As \;$q\to 0$\,, \,we have
\vvn.1>
\beq
\label{q0}
W_I^{\>\on{ell}}(\ttt,\zz,h,\mub)\,\to\,c(h)\,h^{-\ell_I\</2}\>
\bigl(X(\ttt)\bigr)^{-1/2}\,W^\Dl_I(\ttt,\zz,h)\,,
\eeq
where \;$c(h)\,=\,(-1)^{\sum_{k=1}^{N-1} \la^{(k)}\la^{(k+1)}}\>
h^{-\?\sum_{k=1}^{N-1}\>(\la^{(k)})^2\!/2}$ \>and
\be
X(\ttt)\,=\,\prod_{k=1}^{N-1}\,\prod_{a=1}^{\la^{(k)}}
\prod_{b=1}^{\la^{(k+1)}}(t^{(k+1)}_b\!/t^{(k)}_a)\,=\,
\prod_{k=1}^{N-1}\,\prod_{a=1}^{\la^{(k)}}\,
(t^{(k)}_a)^{-\:\la_k\<-\la_{k+1}}\,
\prod_{a=1}^n z_a^{n\:-\la_N}
\ee
\end{thm}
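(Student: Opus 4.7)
The plan is to compute the $q\to 0$ limit in \eqref{q0} factor by factor on the right-hand side of \eqref{UI}, applying the two asymptotic relations of \eqref{eqn:thetl}. Since the symmetrization operators $\Sym_{t^{(k)}}$ appearing in \eqref{WI} and \eqref{WItr} are identical, it suffices to show that the ratio of $U^{\>\on{ell}}_I$ to $U^\Dl_I$ tends to an explicit scalar whose dependence on the $t$\:-variables is a factor symmetric in each batch $t^{(k)}_1,\ldots,t^{(k)}_{\la^{(k)}}$; such a factor can be pulled through the symmetrizations.

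For the prefactor $\thi(h)^{\la^{\{1\}}}$ and for every theta function of a fixed argument appearing in $U^{\>\on{ell}}_I$, namely the numerators and denominators of the ratios $\thi(ht^{(k)}_b/t^{(k)}_a)/\thi(t^{(k)}_b/t^{(k)}_a)$ and the factors $\psi^{\>\on{ell}}_{I,k,a,c}$ in the two cases $i^{(k+1)}_c\ne i^{(k)}_a$, I would use the first limit $\thi(y)\to -y^{-1/2}(1-y)$. For the equality case $i^{(k+1)}_c=i^{(k)}_a$, we have $j(I,k,a)\le k<k+1$, so $\mu_{k+1}/\mu_{j(I,k,a)}=q^{m_{j(I,k,a),k+1}+\eps}$ with $\eps\in(0,1)$, and the second limit of \eqref{eqn:thetl} yields $x^{-m_{j(I,k,a),k+1}-1/2}$ with $x=t^{(k+1)}_c/t^{(k)}_a$. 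Matched against \eqref{UItr}--\eqref{psitr}, each triple $(k,a,c)$ produces the trigonometric factor times one of $-h^{-1/2}x^{-1/2}$, $-x^{-1/2}$, or $x^{-1/2}$ according to the three cases; each ratio indexed by $b>a$ contributes an additional $h^{-1/2}$; and the prefactor contributes $(-h^{-1/2})^{\la^{\{1\}}}$.

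The $x^{-1/2}$ parts multiply over all $(k,a,c)$ to $X(\ttt)^{-1/2}$, which is symmetric in each batch of $t$\:-variables and can therefore be pulled outside the symmetrizations. Since the equality case contributes exactly $\la^{(k)}$ pairs per $k$, the signs combine as $(-1)^{\la^{\{1\}}+\sum_k(\la^{(k)}\la^{(k+1)}-\la^{(k)})}=(-1)^{\sum_k\la^{(k)}\la^{(k+1)}}$, matching the sign of $c(h)$.

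The main obstacle is to verify that the accumulated power of $h$ equals $-\tfrac12\bigl(\ell_I+\sum_{k=1}^{N-1}(\la^{(k)})^2\bigr)$. After accounting for $\la^{\{1\}}$ from the prefactor and $\binom{\la^{(k)}}{2}$ per $k$ from the $b>a$ ratios, this reduces to the combinatorial identity
\begin{equation*}
\ell_I\,=\,\sum_{k=1}^{N-1}\#\bigl\{(a,c):\,i^{(k+1)}_c<i^{(k)}_a\bigr\}\,-\,\sum_{k=1}^{N-1}\binom{\la^{(k)}}{2}.
\end{equation*}
I would prove this identity by splitting each pair $(a,c)$ on the right-hand side according to whether $i^{(k+1)}_c\in\bigcup_{j\le k}I_j$ (contributing exactly $\binom{\la^{(k)}}{2}$ internal pairs) or $i^{(k+1)}_c\in I_{k+1}$ (giving pairs $b<a$ with $b\in I_{k+1}$ and $a\in\bigcup_{j\le k}I_j$); summing over $k$ then counts each $(b,a)$ with $b\in I_l$, $a\in I_j$, $j<l$, $b<a$ exactly once, which is $\ell_I$ by \eqref{ellI}.
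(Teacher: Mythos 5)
Your proposal is correct and follows essentially the same route as the paper: a factor-by-factor application of the limits \Ref{eqn:thetl} to \Ref{UI} versus \Ref{UItr}, with the resulting constant (symmetric in each batch of $t$-variables) pulled through the symmetrizations. Your combinatorial identity is exactly the combination of the two sums the paper records in its proof, namely $\sum_{a}\sum_{c\le k}p_{I\<\<,\:c}(i^{(k)}_a)=\binom{\la^{(k)}}{2}$ and $\sum_{k}\sum_{a}p_{I\<\<,\:k+1}(i^{(k)}_a)=\ell_I$, and your sign and $h$-power bookkeeping reproduces $c(h)\,h^{-\ell_I/2}$ correctly.
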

\begin{proof}
The statement follows from formulae \Ref{WI}\,--\,\Ref{pjk} and
\Ref{WItr}\,--\,\Ref{psitr} for the weight functions, and the limit
properties~\Ref{eqn:thetl}\:. The next formulae simplify the expression
for \,$c(h)$\,:
\vvn-.4>
\be
\sum_{a=1}^{\la^{(k)}}\,\sum_{c=1}^k\,p_{I\<\<,\:c}(i^{(k)}_a)\>=\,
\frac{\la^{(k)}\>(\la^{(k)}\?-1)}2\;,\qquad
\sum_{k=1}^{N-1}\,\sum_{a=1}^{\la^{(k)}}\,p_{I\<\<,\:k+1}(i^{(k)}_a)
\>=\,\ell_I\,.\kern-1em
\vv->
\ee
\end{proof}

\begin{rem}
In the geometric language of Section~\ref{Preliminaries from Geometry},
\,$\ell_I$ \>equals the dimension of the Schubert cell \;$\Om_{\:\id\:,\:I}$\>,
\vvn.1>
see~\Ref{OmsI}\:, \Ref{ellsI}\:. Thus the ratio of the factor
\;$h^{-\ell_I\</2}$ \>in \Ref{q0} and the factor
\;$h^{\codim(\Om_{\id,I}\subset \Fla)/2}$ \>in the normalization condition
in Section~\ref{sec:5} equals \;$h^{-(\dim\Fla)/2}$ and does not depend on
\,$I\<\in\Il$\,.
\end{rem}

\begin{rem}
Notice the substitution formula, cf.~\Ref{Pe}\::
\be
X(\zz_I)\,=\,\prod_{1\le j<k\le N}\,\prod_{a\in I_j\!}\,\prod_{b\in I_k\!}
\;(z_b/z_a)\,.
\ee
\end{rem}

\begin{rem}
Theorem \ref{pr4.1} is our attempt to interpret \cite[Proposition 3.5]{AO}
as a property of the elliptic and trigonometric weight functions.
In \cite[Proposition 3.5]{AO} the authors claim for a Nakajima variety that
an elliptic stable envelope converges to a certain $K\<$-theoretic stable envelope
under a certain limit. The $K\<$-theoretic stable envelope are not precisely
defined in \cite{AO} and the proof of Proposition 3.5 in \cite{AO} is only
sketched.
\end{rem}

\subsection{Recursion properties}
Let \;$s_{a,a+1}(\zz)=(z_1\lc z_{a-1},z_{a+1},z_a,z_{a+2}\lc z_n)$\,.

\begin{thm}
\label{thm:r_tri}
Consider an alcove \,$\Dl$ \>with integer characteristics \,$(m_{i,j})$\,,
and \,$I\in\Il$ \,such that \;$a\in I_k\>,\,a+1\in I_l$\,. If \;$k=l$\,, then
\vvn.2>
\beq
\label{e1}
W^\Dl_I(\ttt,s_{a,a+1}(\zz),h)=W^\Dl_I(\ttt,\zz,h).
\vv.1>
\eeq
If \;$k<l$\,, then
\vvn.1>
\begin{align}
\label{e2}
& W^\Dl_{s_{a,a+1}(I)}(\ttt,\zz,h)\,={}
\\[4pt]
& \!{}=\;\frac{1-hz_{a}/z_{a+1}}{1-z_a/z_{a+1}}\;W^\Dl_I(\ttt,s_{a,a+1}(\zz),h)
\:+\:(h-1)\,\frac{(z_a/z_{a+1})^{m_{k,l}+1}}{1-z_a/z_{a+1}}\;
W^\Dl_I(\ttt,\zz,h)\,.\kern-1em
\notag
\\[-17pt]
\notag
\end{align}
If \;$k>l$\,, then
\vvn.1>
\begin{align}
\label{e3}
& W^\Dl_{s_{a,a+1}(I)}(\ttt,\zz,h)\,={}
\\[4pt]
& {}\!=\;\frac{1-h^{-1}z_{a+1}/z_{a}}{1-z_{a+1}/z_{a}}\;
W^\Dl_I(\ttt,s_{a,a+1}(\zz),h)\:+\:
(h^{-1}-1)\,\frac{(z_{a+1}/z_a)^{m_{l,k}+1}}{1-z_{a+1}/z_a}\;
W^\Dl_I(\ttt,\zz,h)\,.\kern-1em
\notag
\end{align}
\end{thm}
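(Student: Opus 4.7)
The plan is to adapt the proof of Theorem \ref{thm:recur} to the trigonometric setting, replacing the elliptic theta functions $\thi(x)$ by Laurent factors of the form $1-x$, and replacing the theta identities \eqref{two} and \eqref{three} by their $q\to 0$ limits as identities of Laurent polynomials. By \eqref{Wsitr}, it first suffices to prove the three identities in the case $\si=\id$: the substitution $\zz\mapsto(z_{\si(1)},\dots,z_{\si(n)})$ converts each general statement into its $\si=\id$ analog, exactly as in the reduction at the beginning of the proof of Theorem \ref{thm:recur}. By the recursive structure of the product \eqref{UItr}, which mirrors that of \eqref{UI}, the proof then reduces to the case $n=2$, $i=1$, since the factors of $U^\Dl_I$ that do not involve the swapped variables are unchanged and factor out of the symmetrization.

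For case \eqref{e1} (both $a$ and $a+1$ lying in the same block $I_k$), I would mimic the $a=b$ argument of Theorem \ref{thm:recur}: introduce a trigonometric analog of the function $\bt$ in \eqref{btx}, establish its symmetry in $(y_1,y_2)$ as the trigonometric analog of Lemma \ref{btxy}, and assemble $W^\Dl_I$ from these factors via the recursion paralleling \eqref{WWU}. The required symmetry is a Laurent-polynomial identity obtained as the $q\to 0$ limit of \eqref{three}.

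For cases \eqref{e2} and \eqref{e3} ($k\ne l$), I would introduce intermediate objects analogous to \eqref{WMI}, but built from trigonometric $\Ut^{(k)}_I$ factors, and prove by induction on $k$ the trigonometric analog of \eqref{RW}. The base case $k=a$ is a Laurent-polynomial identity derived as the $q\to 0$ limit of \eqref{two}, while the inductive step rests on the $q\to 0$ limit of \eqref{three}. The alcove integer $m_{k,l}$ enters through the exponent $-m_{j(I,k,a),k+1}$ in the definition \eqref{psitr} of $\psi^\Dl_{I,k,a,c}$ and accumulates to give the factor $(z_a/z_{a+1})^{m_{k,l}+1}$ in the final formula.

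The main obstacle is correctly tracking the dynamical factor $(z_a/z_{a+1})^{m_{k,l}+1}$ through the induction, and handling the asymmetry between the cases $k<l$ and $k>l$: in the elliptic setting this asymmetry is absorbed into the dynamical $R$-matrix, but trigonometrically the two cases \eqref{e2} and \eqref{e3} must be verified separately, though in parallel fashion. As a consistency check, one can compare with the $q\to 0$ limit of \eqref{a<>b} via Theorem \ref{pr4.1}, rearranged to express $W^{\on{ell}}_{s_{a,a+1}(I)}(\ttt,\zz)$ in terms of the values of $W^{\on{ell}}_I$ and $W^{\on{ell}}_{s_{a,a+1}(I)}$ at $\zz$ and at $s_{a,a+1}(\zz)$; the two signs of $\ell_{s_{a,a+1}(I)}-\ell_I=\pm 1$ (corresponding to $k<l$ and $k>l$ respectively) account precisely for the differing $h$-powers appearing in the coefficients of \eqref{e2} and \eqref{e3}.
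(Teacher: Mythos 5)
Your plan is sound, but it is a genuinely different (and considerably longer) route than the paper's. The paper proves Theorem~\ref{thm:r_tri} in one line, as a direct consequence of Theorems~\ref{thm:recur} and~\ref{pr4.1}: one sets \,$\mu_k/\mu_l=q^{\>m_{l,k}+\eps_{l,k}}$ as in \Ref{lim nu}, lets \,$q\to0$ in the elliptic exchange relations \Ref{a=b}, \Ref{a<>b}, and uses \Ref{eqn:thetl} together with the limit formula \Ref{q0} to replace each elliptic weight function by its trigonometric counterpart; the \,$I$-independent prefactor \,$c(h)\,X(\ttt)^{-1/2}$ cancels, while the \,$I$-dependent factor \,$h^{-\ell_I/2}$ (with \,$\ell_{s_{a,a+1}(I)}-\ell_I=\pm1$) and the degenerated dynamical \,$R$-matrix entries produce exactly the coefficients of \Ref{e2} and \Ref{e3}, including the monomials \,$(z_a/z_{a+1})^{m_{k,l}+1}$. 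This is precisely the computation you relegate to a ``consistency check'' at the end; in the paper it \emph{is} the proof. Your primary route --- redoing the induction of Theorem~\ref{thm:recur} directly in the trigonometric category, with degenerate analogs of \Ref{two} and \Ref{three} --- should work, since the recursive structure of \Ref{UItr} mirrors \Ref{UI}, but it obliges you to state and verify those Laurent-polynomial identities and to track the alcove exponents through the induction by hand (note that in the elliptic proof the dynamical dependence of the final coefficient enters only through the function \,$\phi$ in \Ref{rho} and the levels \,$k\ge a-1$, the lower-level factors being common to \,$I$ and \,$s(I)$, so your description of how \,$m_{k,l}$ ``accumulates'' from \Ref{psitr} would need to be made precise along these lines). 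What your approach buys is a self-contained trigonometric proof not relying on the elliptic theory or on the limit Theorem~\ref{pr4.1}; what the paper's approach buys is brevity, since both ingredients are already established.
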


\begin{proof}
The statement follows from Theorems~\ref{thm:recur} and~\ref{pr4.1}.
\end{proof}

\subsection{Substitution properties of trigonometric weight functions}
\label{sec:substitutions}

Recall
\vvn.1>
that for \,$I\?\in \Il$ \,we use the notation
\;$\bigcup_{a=1}^k I_a=\:\{\>i^{(k)}_1\!\lsym<i^{(k)}_{\la^{(k)}}\}$\,.
For a function \,$f(\ttt,\zz,h)$\,, we denote by \,$f(\zz_I,\zz,h)$
\,the result of the substitution \;$t^{(k)}_a\!=z_{i^{(k)}_a}$
\vvn.2>
\,for all \,$k=1\lc N$, \,$a=1\lc\la^{(k)}$.
Recall the partial ordering \;$\leq_{\:\si}$ \,on \,$\Il$ \,defined
in Section~\ref{sec:e-sub}.

\vsk.2>
The proofs of the following four lemmas are straightforward modifications
of those in \cite[Section~6.1]{RTV2}\:, cf.~the proof of
Lemmas~\ref{lem:tria}\,--\,\ref{lem:div_e} in Section~\ref{sec:e-sub}.

\begin{lem}
\label{lem:WK divisible by E}
For any \;$\si\in S_n$ and \,$I,J\in\Il$\,, the function
\;$\Wt^\Dl_{\si\<,\>I}(\zz_J,\zz,h)$ is a Laurent polynomial.
\end{lem}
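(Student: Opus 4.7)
The plan is to mirror the proof of Lemma~\ref{lem:div_e}. By formula~\Ref{Wsitr} it suffices to treat the case $\si=\id$, so the task reduces to showing that $\Wt^\Dl_I(\zz_J,\zz,h) = W^\Dl_I(\zz_J,\zz,h)/E(\zz_J,h)$ is a Laurent polynomial in $z_1\lc z_n$ and $h$.

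I would first verify that $W^\Dl_I(\ttt,\zz,h)$ is itself a Laurent polynomial in the variables $t^{(k)}_a$, $z_i$, and $h$. Inspection of \Ref{UItr}--\Ref{psitr} shows that the only apparent denominators in $U^\Dl_I$ are the factors $(1-t^{(k)}_b/t^{(k)}_a)$ with $a<b$, and these are cancelled after the symmetrization $\Sym_{\>t^{(k)}}$ within each group of $t^{(k)}$-variables by the standard antisymmetry/Vandermonde argument. Consequently $W^\Dl_I(\zz_J,\zz,h)\in\C[z_1^{\pm1}\lc z_n^{\pm1},h^{\pm1}]$, and it remains to show that $E(\zz_J,h)$ divides it. The diagonal contributions with $a=b$ to \Ref{trigE} amount to a factor $(1-h)^{\la^{\{1\}}}$, which is absorbed by the explicit prefactor in~\Ref{WItr}; the off-diagonal factors are of the form $(1-hz_{i^{(k)}_b}/z_{i^{(k)}_a})$ for each $k=1\lc N-1$ and each ordered pair $a\ne b$ in $\{1\lc\la^{(k)}\}$, and for each of these one must show that $W^\Dl_I(\zz_J,\zz,h)$ vanishes on the corresponding hyperplane $z_{i^{(k)}_a}=hz_{i^{(k)}_b}$.

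This off-diagonal vanishing is the main step, and I would establish it by direct inspection of the symmetrization sum evaluated at $\ttt=\zz_J$, paralleling the arguments of Lemmas~\ref{lem:tria}--\ref{lem:div_e} and \cite[Section~6.1]{RTV2}. On the hyperplane $z_{i^{(k)}_a}=hz_{i^{(k)}_b}$ each surviving term of the symmetrization -- one not already killed by some binomial factor $(1-x)$ or $(1-hx)$ in~\Ref{psitr}--\Ref{UItr} -- acquires the required zero factor $(1-ht^{(k)}_b/t^{(k)}_a)|_{\ttt=\zz_J}$. The monomial factors $x^{-m_{j,k+1}}$ appearing in~\Ref{psitr} are invertible on $(\C^\times)^n$ and play no role in the vanishing analysis, so the argument is uniform in the alcove $\Dl$ and reduces, for the anti-dominant alcove, to the calculation already carried out in \cite[Section~6.1]{RTV2}. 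As an independent check, the statement can also be deduced from the already proved Lemma~\ref{lem:div_e} by passing to the $q\to0$ limit via Theorem~\ref{pr4.1}: the elementary prefactors $c(h)h^{-\ell_I/2}X(\ttt)^{-1/2}$ together with $\psi_I(h,\mub)$ all specialize to invertible trigonometric expressions, so regularity of the elliptic object at $\ttt=\zz_J$ passes to Laurent polynomiality of the trigonometric one. The main obstacle in the direct approach is the combinatorial bookkeeping of which permutations in the symmetrization contribute a nonvanishing term at $\ttt=\zz_J$ and of the exact factor they contribute; this is the routine part that is transported from the cited reference.
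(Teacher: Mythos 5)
Your proposal is correct and takes essentially the same route as the paper: the paper handles this lemma (together with Lemmas~\ref{lem:triang}--\ref{lem:div_e_ver}) precisely by citing \cite[Section~6.1]{RTV2} and the proof of the elliptic Lemmas~\ref{lem:tria}--\ref{lem:div_e}, i.e.\ reduction to $\si=\id$ via \Ref{Wsitr} and inspection of the symmetrization sum at $\ttt=\zz_J$, with the alcove entering only through harmless monomial factors. One caution in the write-up: state the key step as each surviving term of the sum containing the factors of $E(\zz_J,h)$ with their full multiplicities (a factor $1-hz_j/z_i$ occurs once for every $k$ with both indices in $\bigcup_{m\le k}J_m$, so multiplicities exceed one for $N\ge 3$), rather than mere vanishing of $W^\Dl_I(\zz_J,\zz,h)$ on the hyperplanes $z_{i^{(k)}_a}=hz_{i^{(k)}_b}$, since first-order vanishing alone would not give the required divisibility.
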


\begin{lem}
\label{lem:triang}
We have \;$\Wt^\Dl_{\si\<,\>I}(\zz_J,\zz,h)=0$ \,unless \,$J\leq_{\:\si}\?I$\,.
\end{lem}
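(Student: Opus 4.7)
The plan is to mirror the proof of the elliptic analog, Lemma \ref{lem:tria}, exploiting the structural parallel between formulas \Ref{UItr}--\Ref{psitr} and \Ref{UI}--\Ref{psi}. First I would reduce to the case $\si=\id$: by definition \Ref{Wsitr}, $\Wt^\Dl_{\si\<,\>I}(\zz_J,\zz,h)$ is obtained from $\Wt^\Dl_{\si^{-1}(I)}(\ttt,\zz,h)$ by the substitution $t^{(k)}_a = z_{\si(j^{(k)}_a)}$, and the condition $J\leq_{\:\si}\?I$ is equivalent to $\si^{-1}(J)\leq_{\>\id}\si^{-1}(I)$. So it suffices to show $\Wt^\Dl_{\id,\>I}(\zz_J,\zz,h)=0$ whenever $J\not\leq_{\>\id}I$. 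By Lemma \ref{lem:WK divisible by E}, $\Wt^\Dl_{\id,\>I}(\zz_J,\zz,h)$ is a genuine Laurent polynomial, so I may instead show that the numerator $W^\Dl_I(\zz_J,\zz,h)$ vanishes under this hypothesis (the factor $E(\zz_J,h)$ specializes to a nonzero Laurent polynomial in $\zz$ and $h$).

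Next I would expand $W^\Dl_I(\zz_J,\zz,h)$ as a sum over tuples $(\pi_1,\dots,\pi_{N-1})\in S_{\la^{(1)}}\times\cdots\times S_{\la^{(N-1)}}$ of symmetrizations of $U^\Dl_I$, and for each such tuple substitute $t^{(k)}_a = z_{j^{(k)}_{\pi_k(a)}}$ into \Ref{UItr}--\Ref{psitr}. The key observation is that in the third case of \Ref{psitr} (when $i^{(k+1)}_c > i^{(k)}_a$) one gets a factor $1-z_{j^{(k+1)}_{\pi_{k+1}(c)}}/z_{j^{(k)}_{\pi_k(a)}}$, which vanishes precisely when $j^{(k+1)}_{\pi_{k+1}(c)}=j^{(k)}_{\pi_k(a)}$. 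Since this coincidence of zero loci with the elliptic factors $\thi(t^{(k+1)}_c/t^{(k)}_a)$ is the only combinatorial input used in the proof of Lemma \ref{lem:tria}, the same term-by-term vanishing argument applies verbatim.

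The combinatorial heart is then the assertion that whenever $J\not\leq_{\>\id}I$, every term in the symmetrization sum has at least one such vanishing factor. Concretely: choose the least $k$ for which $j^{(k)}_r>i^{(k)}_r$ for some $r$, and then pigeonhole on the values $\{j^{(k)}_1,\dots,j^{(k)}_r\}$ versus $\{i^{(k)}_1,\dots,i^{(k)}_r\}$. For any $\pi_k$, some assigned value $z_{j^{(k)}_{\pi_k(a)}}$ must equal $z_m$ for an index $m>i^{(k)}_a$ that also lies in some $J_\ell$ with $\ell\le k+1$, which forces the offending factor from the third branch of \Ref{psitr} to vanish.

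The main obstacle is the bookkeeping of the combinatorial vanishing just sketched; however, this is exactly the content of the inspection argument already carried out for \Ref{UI} in the elliptic setting, and since $1-x$ and $\thi(x)$ have the same vanishing locus at $x=1$, that argument transfers without modification. Thus once the reduction to $\si=\id$ and the identification of the relevant factors are made, the proof reduces to invoking the same inspection as in Lemmas \ref{lem:tria}--\ref{lem:div_e}.
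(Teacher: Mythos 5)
Your proposal is correct and takes essentially the same route as the paper, whose proof of this lemma is precisely the reduction to $\si=\id$ via \Ref{Wsitr}, the observation that $E(\zz_J,h)\neq 0$, and the term-by-term inspection of the symmetrization sum showing each summand acquires a vanishing factor from the third branch of \Ref{psitr} when $J\not\leq_{\>\id}I$ (stated as a straightforward modification of Lemmas \ref{lem:tria}--\ref{lem:div_e} and of \cite[Section 6.1]{RTV2}). Only your one-step pigeonhole phrasing is slightly loose --- the vanishing factor is located by tracking the repeated value $z_m$ through successive levels $k,k+1,\dots,N$ rather than in a single step from $k$ to $k+1$ --- but this is exactly the ``inspection'' that the paper itself leaves implicit.
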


Define
\beq
\label{PsiI}
P_{\si\<,\>I}\,=\prod_{k<l}\prod_{\si(a)\in I_k}
\prod_{\satop{\si(b)\in I_l}{b>a}}\!\bigl(-\:z_{\si(b)}/z_{\si(a)}\bigr)\,.
\vv.3>
\eeq
Using definitions~\Ref{ehor}\:, \Ref{evertr} \,of
\,$e_{\si\<,\>I}=\:e_{\si\<,\>I\<,-}^{\hor}\>e_{\si\<,\>I\<,-}^{\vert}$\,,
observe that
\vvn.2>
\beq
\label{Pe}
P_{\si\<,\>I} \cdot e_{\si\<,\>I}=\,\prod_{k<l}\,\prod_{\si(a)\in I_k}\!
\biggl(\>\prod_{\satop{\si(b)\in I_l}{b<a}}\!(1-hz_{\si(b)}/z_{\si(a)})\!
\prod_{\satop{\si(b)\in I_l}{b>a}}\!(1-z_{\si(b)}/z_{\si(a)})\biggr).
\eeq

\begin{lem}
\label{lem:expected}
We have
\;$\Wt^\Dl_{\si\<,\>I}(\zz_I,\zz,h)=P_{\si\<,\>I}\cdot e_{\si\<,\>I}$\,.
\end{lem}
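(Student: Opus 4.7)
The argument mimics that of Lemma~\ref{lem:expec} for the elliptic weight functions. By the covariance relation~\Ref{Wsitr} it suffices to treat the case $\si=\id$, so the claim becomes $\Wt^\Dl_{\id,I}(\zz_I,\zz,h)=P_{\id,I}\cdot e_{\id,I}$, with the right-hand side made explicit by~\Ref{Pe}.

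Since $E(\ttt,h)$ is symmetric in each block $(t^{(k)}_1,\dots,t^{(k)}_{\la^{(k)}})$, expanding~\Ref{WItr} and~\Ref{Wtilde} gives
\begin{equation*}
\Wt^\Dl_{\id,I}(\zz_I,\zz,h)\,=\,\frac{(1-h)^{\la^{\{1\}}}}{E(\zz_I,h)}
\sum_{\sigma} U^\Dl_I(\ttt_\sigma,\zz,h)\big|_{\ttt=\zz_I}\,,
\end{equation*}
where $\sigma=(\sigma_1,\dots,\sigma_{N-1})\in S_{\la^{(1)}}\times\dots\times S_{\la^{(N-1)}}$ and $\ttt_\sigma$ denotes the substitution $t^{(k)}_a\mapsto z_{i^{(k)}_{\sigma_k(a)}}$. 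The first key step is to show that only $\sigma=(\id,\dots,\id)$ contributes. Choose $k^{*}$ maximal with $\sigma_{k^{*}}\ne\id$; by maximality the $t^{(k^{*}+1)}$-variables sit in the standard position $t^{(k^{*}+1)}_c=z_{i^{(k^{*}+1)}_c}$ (or $k^{*}+1=N$ and $t^{(N)}=\zz$ is never permuted). Pick $a$ with $\sigma_{k^{*}}(a)>a$ (possible since $\sigma_{k^{*}}\ne\id$), set $m=i^{(k^{*})}_{\sigma_{k^{*}}(a)}$, and let $c$ be the unique index with $i^{(k^{*}+1)}_c=m$. Then $i^{(k^{*}+1)}_c=m>i^{(k^{*})}_a$, so the third case of~\Ref{psitr} applies and $\psi^\Dl_{I,k^{*},a,c}$ evaluates to $1-z_m/z_m=0$.

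For the surviving identity term I organise the product $(1-h)^{\la^{\{1\}}}U^\Dl_I(\zz_I,\zz,h)/E(\zz_I,h)$ by ordered pairs $(m,n)$ of distinct elements of $\{1,\dots,n\}$ and count, for each binomial of the form $1-z_n/z_m$ or $1-hz_n/z_m$, the number of levels $k\in\{1,\dots,N-1\}$ at which it appears in the three available sources: the factors $\psi^\Dl_{I,k,a,c}$, the horizontal product $\prod_{b>a}(1-ht^{(k)}_b/t^{(k)}_a)/(1-t^{(k)}_b/t^{(k)}_a)$ inside $U^\Dl_I$, and $E(\zz_I,h)$. After the diagonal of $E(\zz_I,h)$ absorbs the prefactor $(1-h)^{\la^{\{1\}}}$, the counting (which depends only on the relative positions of $j(m)$ and $j(n)$, where $m\in I_{j(m)}$) shows that all pairs with $j(m)=j(n)$ cancel, and each pair with $j(m)\ne j(n)$ leaves exactly one binomial. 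Reassembling these surviving binomials matches the right-hand side of~\Ref{Pe} term by term.

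The main obstacle is the level-by-level bookkeeping in the third paragraph: one must track the number of occurrences of each binomial across the three sources and verify the cancellation. The vanishing argument of the second paragraph is the genuinely new input, while the combinatorial cancellation is the trigonometric specialisation (formally $\thi(x)$ replaced by $1-x$) of the computation carried out in~\cite[Section~6.1]{RTV2}.
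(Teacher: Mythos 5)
Your proposal is correct and follows essentially the same route as the paper, which reduces to $\si=\id$ via \Ref{Wsitr}, observes that every non-identity term of the symmetrization picks up a vanishing factor $1-z_m/z_m$ at $\ttt=\zz_I$, and then matches $(1-h)^{\la^{\{1\}}}U^\Dl_I(\zz_I,\zz,h)/E(\zz_I,h)$ with $P_{\si\<,\>I}\>e_{\si\<,\>I}$ by the same level-by-level count (the paper simply refers to \cite[Section 6.1]{RTV2} and the elliptic analogue for this bookkeeping). Your counting claims are accurate (with the minor remark that the middle-case factors $(t^{(k+1)}_c/t^{(k)}_a)^{-m_{j,k+1}}$ become $1$ under the substitution, and that ``exactly one binomial survives'' should be read per unordered pair with $j(m)\ne j(n)$), so there is no gap.
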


\begin{lem}
\label{lem:div_e_ver}
For any \,$I,J\in\Il$\,, the ratio
\;$\Wt^\Dl_{\si\<,\>I}(\zz_J,\zz,h)/e^{\vert}_{\si\<,\>J,-}$
is a Laurent polynomial.
\end{lem}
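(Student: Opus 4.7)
The plan is to mimic the proof of Lemma~\ref{lem:div_e} in the elliptic case, carrying out a termwise divisibility analysis inside the symmetrization sum that defines $W^\Dl_I$.

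First, I would reduce to the case $\si=\id$. By formula~\Ref{Wsitr}, $W^\Dl_{\si\<,\>I}(\ttt,\zz,h)=W^\Dl_{\si^{-1}(I)}(\ttt,z_{\si(1)},\ldots,z_{\si(n)},h)$, and relabeling $\zz$ accordingly identifies $e^{\vert}_{\si\<,\>J\<,-}(\zz,h)$ with $e^{\vert}_{\id\<,\:\si^{-1}(J),-}$ in the new variables. It therefore suffices to prove that for every $I,J\in\Il$ the ratio $\Wt^\Dl_{\id\<,\>I}(\zz_J,\zz,h)/e^{\vert}_{\id\<,\>J\<,-}(\zz,h)$ is a Laurent polynomial. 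Since Lemma~\ref{lem:WK divisible by E} already ensures that $\Wt^\Dl_{\id\<,\>I}(\zz_J,\zz,h)$ is a Laurent polynomial, the task reduces to checking that each binomial $(1-hz_b/z_a)$ appearing in $e^{\vert}_{\id\<,\>J\<,-}$---indexed by triples $k<l$, $a\in J_k$, $b\in J_l$, $b<a$---divides $\Wt^\Dl_{\id\<,\>I}(\zz_J,\zz,h)$; equivalently, the specialization $z_a=hz_b$ must make it vanish.

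Next, I would expand $W^\Dl_I(\zz_J,\zz,h)$ as a sum over tuples of permutations $(\si^{(1)},\ldots,\si^{(N-1)})\in S_{\la^{(1)}}\times\cdots\times S_{\la^{(N-1)}}$ produced by the symmetrizations in~\Ref{WItr}, and aim to show, summand by summand, that each permuted evaluation of $U^\Dl_I$ at $\ttt=\zz_J$ is divisible (as a Laurent polynomial in $\zz$ and $h$) by $E(\zz_J,h)\cdot e^{\vert}_{\id\<,\>J\<,-}(\zz,h)$. The $E(\zz_J,h)$ factor absorbs the denominator in~\Ref{Wtilde}. For $J\not\le_{\>\id}\?I$ every summand vanishes outright, by the same mechanism as in Lemma~\ref{lem:triang}. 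For the surviving summands, I would match each required binomial $(1-hz_b/z_a)$ to a factor of the permuted $U^\Dl_I(\zz_J,\zz,h)$ coming either from the $(1-hx)$ branch of some $\psi^\Dl_{I,k',a',c'}$ in~\Ref{psitr} (the case $i^{(k'+1)}_{c'}<i^{(k')}_{a'}$) or from the diagonal product $(1-ht^{(k')}_{b'}/t^{(k')}_{a'})$ in~\Ref{UItr}, which specializes at $\ttt=\zz_J$ to a binomial of the required shape.

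The hard part will be the combinatorial bookkeeping in this last step: for every surviving permutation tuple one must verify that each binomial factor of $e^{\vert}_{\id\<,\>J\<,-}$ actually arises from the $\psi^\Dl$'s and the diagonal products, and that no spurious factor survives after dividing by $E(\zz_J,h)$. This is the same inspection carried out in~\cite[Section~6.1]{RTV2}, and it is also the trigonometric shadow of the analogous argument in Section~\ref{sec:e-sub}. In fact one can alternatively deduce the statement directly from the elliptic divisibility (Lemma~\ref{lem:div_e}) by taking the $q\to 0$ limit of Theorem~\ref{pr4.1}, since under this limit each theta factor $\thi$ collapses to its binomial counterpart via~\Ref{eqn:thetl}, $e^{\on{ell},\vert}_{\id\<,\>J\<,-}(\zz,h)$ degenerates (up to a monomial prefactor) to $e^{\vert}_{\id\<,\>J\<,-}(\zz,h)$, and a Laurent polynomial is a limit of entire quasi-periodic functions only if the corresponding binomial divisibilities hold.
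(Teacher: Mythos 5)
Your main argument is essentially the paper's own proof: the paper handles this lemma (together with Lemmas \ref{lem:WK divisible by E}--\ref{lem:expected}) by the termwise inspection of the symmetrization sum as in \cite[Section 6.1]{RTV2} and in the proof of Lemmas \ref{lem:tria}--\ref{lem:div_e}, i.e.\ exactly the reduction to $\si=\id$ and the matching of each binomial $(1-hz_{\si(b)}/z_{\si(a)})$ of $e^{\vert}_{\si\<,\>J,-}$ with a factor of each evaluated summand of $U^\Dl_I$ that you describe. Your alternative derivation from Lemma \ref{lem:div_e} via the $q\to0$ limit of Theorem \ref{pr4.1} is a plausible extra route, but it is not the argument the paper uses.
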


\subsection{Newton polytope properties} \label{sec:Newton}
For $a=1,\ldots,n-1$, denote by \,$K_a$ the automorphism of
\,$\C[\zz^{\pm1},h^{\pm 1}]$ \,switching \,$z_a$ and \,$z_{a+1}$\,.
That is \,$K_a(f(\zz))=f(s_{a,\:a+1}(\zz))$\,.

\vsk.2>
Let \;$\nu\in\R^N$ belong to the alcove \,$\Dl$ \,with integer characteristics
$(m_{i,\:j})$. We will use the notation
$\eps_{i,\:j}=\nu_j-\nu_i-m_{i,\:j}$ \,for the fractional part of
\;$\nu_j-\nu_i$ \>for \,$1\leq i<j\leq N$\:;
\vvn.1>
we have \,$0<\eps_{i,\:j}<1$\,. Define
\be
S_I=\>S_I(\zz)\,=\,\prod_{k=1}^N\,\prod_{a\in I_k\!}\,z_a^{-\nu_k}
\qquad\text{and}\qquad
\bW^\Dl_{I\<,\:J}\>=\>\bW^\Dl_{I\<,\:J}(\zz,h)\>=\>
W^\Dl_I(\zz_J,\zz,h)\cdot S_I(\zz)\,.
\kern-1em
\ee
Although the notation does not record it, both \,$S_I$ and
\,$\bW^\Dl_{I\<,\:J}$ \:depend on \;$\nu$ \,itself,
not only on its alcove \,$\Dl$\,.

\begin{lem}
Let \,$I\in\Il$ be \,such that \;$a\in I_k\>,\,a+1\in I_l$\,.
If \;$k=l$\,, then
\vvn.3>
\beq\label{ee1}
\bW^\Dl_{I,s_{a,\:a+1}(J)}\,=\,K_a(\bW^\Dl_{I\<,\:J})\,.
\eeq
If \;$k<l$\,, then
\vvn-.1>
\beq
\label{ee2}
\bW^\Dl_{I,s_{a\<,a+1}(J)}\,=\,K_a\biggl(\:
\frac{1-z_{a}/z_{a+1}}{1-hz_a/z_{a+1}};\bW^\Dl_{s_{a\<,a+1}(I),J}\:+\:
(1-h)\,\frac{(z_a/z_{a+1})^{1-\eps_{k\<,\:l}}}{1-hz_a/z_{a+1}}\;
\bW^\Dl_{I\<,\:J}\:\biggr).\kern-1em
\eeq
If \;$k>l$\,, then
\beq
\label{ee3}
\bW^\Dl_{I,s_{a\<,a+1}(J)}\,=\,K_a\biggl(\:
\frac{1-z_{a+1}/z_{a}}{1-h^{-1}z_{a+1}/z_{a}}\;\bW^\Dl_{s_{a\<,a+1}(I),J}\:+\:
(1-h^{-1})\,\frac{(z_{a+1}/z_{a})^{1-\eps_{l,k}}}{1-h^{-1}z_{a+1}/z_{a}}\;
\bW^\Dl_{I\<,\:J}\biggr).\kern-1em
\eeq
\end{lem}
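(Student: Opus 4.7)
The plan is to deduce the three identities by substituting $\ttt=\zz_J$ in the recursions of Theorem~\ref{thm:r_tri}, applying the automorphism $K_a$ to trade the argument $s_{a,a+1}(\zz)$ back for $\zz$, and finally multiplying by $S_I(\zz)$ and collecting terms. The combinatorial input is the identity
\beq
\label{KzJ}
K_a(\zz_J)\,=\,\zz_{s_{a,a+1}(J)}
\eeq
viewed as an equality of tuples of specializations of the $t$-variables. The point is that applying $s_{a,a+1}$ to $J$ either leaves each set $\bigcup_{m=1}^k J_m$ unchanged (when $a,a+1$ both lie in, or both miss, that set) or replaces $a$ by $a+1$ (resp.\ $a+1$ by $a$) at the same rank, since $a$ and $a+1$ are consecutive integers; this is precisely the effect of applying $K_a$ entry by entry to $\zz_J$. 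Combined with $K_a(\zz)=s_{a,a+1}(\zz)$, \Ref{KzJ} gives the relation
\be
K_a\bigl(W^\Dl_I(\zz_J,\,\zz,\,h)\bigr)\,=\,W^\Dl_I(\zz_{s_{a,a+1}(J)},\,s_{a,a+1}(\zz),\,h)\,.
\ee

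A short direct calculation of the transformation of the prefactor gives: if $a\in I_k$, $a+1\in I_l$, then
\be
K_a(S_I)\,=\,S_I\cdot(z_a/z_{a+1})^{\nu_k-\nu_l}\,=\,S_{s_{a,a+1}(I)}\,.
\ee
In particular $K_a(S_I)=S_I$ when $k=l$. For the case $k=l$, I would substitute $\ttt=\zz_J$ in \Ref{e1} to get $W^\Dl_I(\zz_J,s(\zz),h)=W^\Dl_I(\zz_J,\zz,h)$, apply $K_a$ and invoke \Ref{KzJ} to obtain $W^\Dl_I(\zz_{s(J)},\zz,h) = K_a\bigl(W^\Dl_I(\zz_J,\zz,h)\bigr)$, and then multiply by $S_I$, using $K_a(S_I)=S_I$ to obtain \Ref{ee1}.

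For the case $k<l$, I would solve \Ref{e2} for $W^\Dl_I(\ttt,s(\zz),h)$, substitute $\ttt=\zz_J$, apply $K_a$ (using \Ref{KzJ} to convert the LHS to $W^\Dl_I(\zz_{s(J)},\zz,h)$), and multiply by $S_I$. The identity $K_a(S_I)=S_{s(I)}$ collapses $S_I\cdot W^\Dl_{s(I)}(\zz_J,\zz,h)$ into $\bW^\Dl_{s(I),J}$, while the second summand acquires the combined exponent
\be
(z_a/z_{a+1})^{m_{k,l}+1}\cdot(z_a/z_{a+1})^{\nu_k-\nu_l}\,=\,(z_a/z_{a+1})^{1-\eps_{k,l}}\,,
\ee
using $\nu_k-\nu_l=-m_{k,l}-\eps_{k,l}$. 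This yields \Ref{ee2}. The case $k>l$ is entirely parallel, using \Ref{e3} and $\nu_k-\nu_l=m_{l,k}+\eps_{l,k}$.

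The main obstacle I anticipate is not any single step but the bookkeeping of the fractional exponents of $z_a/z_{a+1}$. In particular, one must show that the exponent produced by the interplay of the recursion's $(z_a/z_{a+1})^{m_{k,l}+1}$ with the ratio $K_a(S_I)/S_I$ comes out to exactly the fractional-correction $1-\eps_{k,l}$ stated in \Ref{ee2}--\Ref{ee3}. Verifying \Ref{KzJ} also requires a short combinatorial check on how the ordered sequences $\bigcup_{m=1}^k J_m$ transform under an adjacent transposition of consecutive integers, but this reduces to three easy cases and is routine.
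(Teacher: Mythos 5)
Your proposal is correct and follows essentially the same route as the paper: specialize the recursions of Theorem~\ref{thm:r_tri} at $\ttt=\zz_J$, use $K_a$ together with $K_a(S_I)=S_{s_{a,a+1}(I)}=(z_a/z_{a+1})^{\nu_k-\nu_l}S_I$, and combine the exponents $m_{k,l}+1$ and $\nu_k-\nu_l=-m_{k,l}-\eps_{k,l}$ to get $1-\eps_{k,l}$ (and similarly for $k>l$), exactly as in the paper's derivation of \Ref{ee4}. The only minor caveat is that $K_a(\zz_J)=\zz_{s_{a,a+1}(J)}$ holds as literal tuples only when at most one of $a,a+1$ lies in a given partial union $\bigcup_{m\le k}J_m$; when both lie in it the two tuples differ by a transposition within the $t^{(k)}$-group, which is harmless because $W^\Dl_I$ is symmetric in each group of $t$-variables.
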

\vsk.2>
\begin{proof}
Let \,$k=l$\,. Then taking into account \,\Ref{e1}\:,
\vvn.3>
\begin{align*}
K_a(\bW^\Dl_{I\<,\:J})\,=\,K_a\bigl(W^\Dl_I(\zz_J,\zz,h)\bigr)\,K_a(S_I)\,&
{}=\,W^\Dl_I\bigl(\zz_{s_{a\<,a+1}(J)},s_{a,\:a+1}(\zz),h\bigr)\,S_I\,={}
\\[4pt]
&{}=\,W^\Dl_I(\zz_{s_{a\<,a+1}(J)},\zz,h)\,S_I\,=\,
\bW^\Dl_{I,s_{a\<,a+1}(J)}\,.
\end{align*}
Let \,$k<l$\,. Formula~\Ref{e2}\:, rearranged, gives
\vvn.3>
\be
W^\Dl_I\bigl(\ttt,s_{a,\:a+1}(\zz),h\bigr)\,=\,
\frac{1-z_a/z_{a+1}}{1-hz_a/z_{a+1}}\;W^\Dl_{s_{a\<,a+1}(I)}(\ttt,\zz,h)\:+\:
(1-h)\,\frac{(z_a/z_{a+1})^{m_{k,l}+1}}{1-hz_a/z_{a+1}}\;W^\Dl_I(\ttt,\zz,h)\,.
\kern-.5em
\vv.3>
\ee
Substitute now \,$\ttt=\zz_J$\,. Then,
\vvn.3>
\begin{align*}
K_a &{}\bigl(W^\Dl_I(\zz_{s_{a\<,a+1}(J)},\zz,h)\bigr)\,=\,
W^\Dl_I\bigl(\zz_J,s_{a,\:a+1}(\zz),h\bigr)\,={}
\\[4pt]
&{}\!=\;\frac{1-z_a/z_{a+1}}{1-hz_a/z_{a+1}}\;
W^\Dl_{s_{a\<,a+1}(I)}(\zz_{J},\zz,h)\:+\:(1-h)\,
\frac{(z_a/z_{a+1})^{m_{k,l}+1}}{1-hz_a/z_{a+1}}\;W^\Dl_I(\zz_{J},\zz,h)\,,
\\[-22pt]
\end{align*}
that is,
\vvn.2>
\be
\frac{K_a(\bW^\Dl_{I,s_{a\<,a+1}(J)})}{K_a(S_I)}\,=\,
\frac{1-z_a/z_{a+1}}{1-hz_a/z_{a+1}}\cdot
\frac{\bW^\Dl_{s_{a\<,a+1}(I),J}}{S_{s_{a\<,a+1}(I)}}\>+\>
(1-h)\,\frac{(z_a/z_{a+1})^{m_{k,l}+1}}{1-hz_a/z_{a+1}}\cdot
\frac{\bW^\Dl_{I,J}}{{S_I}}\;.
\vvgood
\vv.3>
\ee
Since \;$K_a(S_I)\:=\:S_{s_{a\<,a+1}(I)}\:=\:(z_{a+1}/z_a)^{\nu_l-\nu_k}\>S_I
\:=\:(z_{a+1}/z_a)^{m_{k\<,\:l}+\eps_{k\<,\:l}}\>S_I$\,, \,we obtain
\vvn.1>
\beq
\label{ee4}
K_a\bigl(\bW^\Dl_{I,s_{a\<,a+1}(J)}\bigr)\,=\,
\frac{1-z_a/z_{a+1}}{1-hz_a/z_{a+1}}\;\bW^\Dl_{s_{a\<,a+1}(I),J}\:+\:
(1-h)\,\frac{(z_a/z_{a+1})^{1-\eps_{k,l}}}{1-hz_a/z_{a+1}}\;\bW^\Dl_{I,J}\,,
\vv.3>
\eeq
which is equivalent to formula~\Ref{ee2}\:.
The case \,$k>l$ \,is proved similarly.
\end{proof}

For a Laurent polynomial \,$f\in\C[z_1^{\pm 1}\lc z_n^{\pm 1},\:h^{\pm 1}]$\,,
let \;$\Ne\:(f)$ \>be the convex hull of the finite set
\vvn-.2>
\be
\{\:(k_1\lc k_n)\in\Z^n\ |\ \,\text{the coefficient of }
\,z_1^{k_1}\<\dots z_n^{k_n}\,\text{ in }\,f\,\text{ is not }\,0\>\}\,.
\ee
That is, in Newton polytope considerations we treat \,$h$ \,as a number, not
as a variable. Let \,$O_{I\<,\:J}=\:\Ne\:\bigl(\bW^\Dl_{I\<,\:J}(\zz,h)\bigr)$\,.
The operators \,$K_a$ act naturally on Newton polytopes, satisfying
\,$K_a\bigl(\Ne\:(f(\zz))\bigr)=\:\Ne\:\bigl(f(s_{a,\:a+1}(\zz))\bigr)$.

\begin{lem}
\label{lem:extralem}
We have \;$O_{s_{a\<,a+1}(J),\>s_{a\<,a+1}(J)}\:=\:K_a(O_{J,\:J})$\,.
\end{lem}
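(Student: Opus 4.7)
The plan is to derive an explicit product formula for $\bW^\Dl_{J,J}$, express its Newton polytope as a Minkowski sum of segments coming from individual binomial factors, and then check that each segment transforms correctly under the swap $J\mapsto J':=s_{a,a+1}(J)$ and $K_a$. By Lemma~\ref{lem:expected} combined with~\Ref{Wtilde} and~\Ref{Pe},
\begin{equation*}
\bW^\Dl_{J,J}\,=\,S_J(\zz)\,E(\zz_J,h)\!\prod_{k<l}\>\prod_{a\in J_k}\!
\biggl(\prod_{\satop{b\in J_l}{b<a}}(1-hz_b/z_a)\!\prod_{\satop{b\in J_l}{b>a}}(1-z_b/z_a)\biggr),
\end{equation*}
where $E(\zz_J,h)=\prod_{k=1}^{N-1}\prod_{i,j\in A_k}(1-hz_j/z_i)$ with $A_k=\bigcup_{c=1}^k J_c$. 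Thus $\bW^\Dl_{J,J}$ is the product of the monomial $S_J$, scalars $(1-h)$ coming from the diagonal terms $i=j$ of $E$, and binomials $(1-c\>z_j/z_i)$ with $i\ne j$ and $c\in\{1,h\}$.

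Next I claim that $O_{J,J}$ equals the Minkowski sum
\begin{equation*}
\{\deg S_J\}\,+\!\!\!\sum_{\satop{1\le k\le N-1}{i,j\in A_k,\>i\ne j}}\![\>0,\mathbf e_j-\mathbf e_i]\,+\!\!\!\sum_{\satop{1\le k<l\le N}{a\in J_k,\>b\in J_l}}\![\>0,\mathbf e_b-\mathbf e_a]
\end{equation*}
in $\R^n$, where $\mathbf e_1,\dots,\mathbf e_n$ denotes the standard basis. One inclusion is automatic. For the reverse, each vertex of this Minkowski sum is uniquely attained: a generic linear functional $\ell$ selects, in every binomial factor $(1-c\>z_j/z_i)$, either $1$ or $-c\>z_j/z_i$ (whichever is $\ell$-maximizing); the resulting vertex monomial in the expansion of $\bW^\Dl_{J,J}$ has nonzero coefficient, a product of nonzero scalars. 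Hence every vertex of the Minkowski sum lies in the support, so the convex hull of the support equals the full Minkowski sum.

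Finally, I verify that each of the three summands transforms correctly under $K_a$ when $J$ is replaced by $J'$. First, $S_{J'}=K_a(S_J)$ by direct inspection of the definition of $S_J$, so $\deg S_{J'}=K_a(\deg S_J)$. Second, $A'_k=s_{a,a+1}(A_k)$ for every $k$, so the multiset of pairs $(i,j)\in A_k\times A_k$ with $i\ne j$ transforms by componentwise $s_{a,a+1}$, and each segment $[\>0,\mathbf e_j-\mathbf e_i]$ is mapped to $[\>0,\mathbf e_{s(j)}-\mathbf e_{s(i)}]=K_a([\>0,\mathbf e_j-\mathbf e_i])$. Third, the set $\{(a,b):a\in J'_k,\>b\in J'_l,\>k<l\}$ is the componentwise $s_{a,a+1}$-image of its analogue for $J$; since the segment $[\>0,\mathbf e_b-\mathbf e_a]$ depends on the pair $(a,b)$ alone and not on whether the coefficient $c$ equals $1$ or $h$, this third summand also transforms by $K_a$. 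Summing the three contributions yields $O_{J',J'}=K_a(O_{J,J})$.

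The main subtlety lies in the middle step — ensuring that no vertex coefficient cancels after multiplying out the binomials. The generic-functional vertex argument above, exploiting the zonotope structure of the factor set, resolves this without delicate computation.
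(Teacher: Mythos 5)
Your proof is correct, but it follows a genuinely different route from the paper's. The paper disposes of the case where $a,a+1$ lie in the same block of $J$ via \Ref{ee1}, and otherwise reduces to $k>l$, notes that $\bW^\Dl_{s_{a,a+1}(J),\:J}=0$ by the triangularity Lemma~\ref{lem:triang}, so that the recursion \Ref{ee3} collapses to $(1-h^{-1}z_a/z_{a+1})\,\bW^\Dl_{s_{a,a+1}(J),\,s_{a,a+1}(J)}=(1-z_a/z_{a+1})\,K_a(\bW^\Dl_{J,J})$; taking Newton polytopes and cancelling the common segment $T$ gives the claim. You instead compute $O_{J,J}$ explicitly: by Lemma~\ref{lem:expected} together with \Ref{Wtilde} and \Ref{Pe}, the class $\bW^\Dl_{J,J}$ is $S_J$ times $E(\zz_J,h)$ times the product in \Ref{Pe}, and since the Newton polytope of a product of Laurent polynomials with coefficients in the integral domain $\C[h^{\pm1}]$ is the Minkowski sum of the factors' polytopes (your generic-functional vertex argument is exactly the standard proof of this multiplicativity, and it does rule out cancellation at the vertices), $O_{J,J}$ is the stated translate of a zonotope; the evident $K_a$-equivariance of $\deg S_J$ and of each generating segment then yields the lemma uniformly, with no case distinction. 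Your observation that $(1-z_b/z_a)$ and $(1-hz_b/z_a)$ contribute the same segment (because $h$ is treated as a generic number) is precisely what makes the reshuffling of the conditions $b<a$ versus $b>a$ under $s_{a,a+1}$ harmless, so that step is sound. Comparing the two: the paper's argument is shorter given the recursion \Ref{ee1}--\Ref{ee3} and Lemma~\ref{lem:triang}, and stays inside the inductive machinery later used for Theorem~\ref{thm:Newton}; yours avoids the recursion and triangularity altogether and yields as a byproduct an explicit description of the diagonal polytope $O_{J,J}$ as a shifted zonotope, which makes its behaviour under coordinate permutations transparent.
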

\begin{proof}
Let \,$a\in J_k$, $a+1\in J_l$. If \,$k=l$\,, then the statement follows from
formula \Ref{ee1}\,.

\vsk.2>
Let \,$k\ne l$\,. Replacing \,$J$ \,by \;$s_{a\<,a+1}(J)$ \,if necessary,
we may assume that \,$k>l$\,. Denote \,$I=s_{a\<,a+1}(J)$\,.
We have $J\not\leq_{\>\id}\<I$, hence \,$\bW^\Dl_{I\<,\:J}=0$\,, \,and
formula~\Ref{ee3} yields
\vvn.3>
\be
(1-h^{-1}z_a/z_{a+1})\,\bW^\Dl_{s_{a\<,a+1}(J),s_{a\<,a+1}(J)}\,=\,
(1-z_a/z_{a+1})\,K_a(\bW^\Dl_{J,\:J})\,.
\vv.3>
\ee
Taking the Newton polytope of both sides, we obtain
\vvn.2>
\be
T+O_{s_{a\<,a+1}(J),s_{a\<,a+1}(J)}\,=\,T+K_a(O_{J,\:J})\,,
\vv.2>
\ee
where \;$+$ \;is the Minkowski sum and
\;$T=\:\Ne\:(1-h^{-1}z_a/z_{a+1})=\:\Ne\:(1-z_a/z_{a+1})$\,.
Using the cancellation law of Minkowski sum, we obtain the statement
of the lemma.
\end{proof}

The following theorem is one of the main results of this paper.

\begin{thm}
\label{thm:Newton}
We have \;$O_{I\<,\:J}\subset O_{J,\:J}$ \>for any \,$I,J\in\Il$\,.
\end{thm}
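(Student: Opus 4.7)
The plan is to prove \(O_{I,J}\subset O_{J,J}\) by induction on the partial order \(\leqid\) on \(\Il\), descending from the unique maximum \(J_\top\in\Il\) (whose partial sums \(\bigcup_{m\le k}(J_\top)_m=\{n{-}\la^{(k)}{+}1,\dots,n\}\) are as late as possible). By Lemma~\ref{lem:triang}, \(\bW^\Dl_{I,J}=0\) unless \(J\leqid I\), so the claim is vacuous outside this range; in particular, the base case \(J=J_\top\) reduces to \(O_{J_\top,J_\top}\subset O_{J_\top,J_\top}\), since \(J_\top\leqid I\) forces \(I=J_\top\).

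For the inductive step, fix \(J\neq J_\top\) and pick an adjacent transposition \(s=s_{a,a+1}\) with \(a\in J_k\), \(a{+}1\in J_l\), \(k<l\); such \(a\) exists precisely because \(J\neq J_\top\). Set \(J'=s(J)\), so \(J'>_{\id}J\) has one more inversion than \(J\), and invoke the inductive hypothesis \(O_{I',J'}\subset O_{J',J'}\) for every \(I'\in\Il\). Apply one of the formulas \Ref{ee1}--\Ref{ee3} with \(J\) renamed \(J'\) (so \(s(J')=J\)), according to the block structure of \(I\) at \((a,a{+}1)\). In the same-block case, \Ref{ee1} gives \(\bW^\Dl_{I,J}=K_a(\bW^\Dl_{I,J'})\), so Lemma~\ref{lem:extralem} immediately yields \(O_{I,J}=K_a(O_{I,J'})\subset K_a(O_{J',J'})=O_{J,J}\). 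Otherwise \(a\in I_{k'},\ a{+}1\in I_{l'}\) with \(k'\neq l'\), and \Ref{ee2} or \Ref{ee3} yields an identity of the form
\[
(1-h^{\pm 1}z^v)\,K_a(\bW^\Dl_{I,J})\;=\;(1-z^v)\,\bW^\Dl_{s(I),J'}\;+\;(1-h^{\pm 1})\,(z^v)^{1-\eps}\,\bW^\Dl_{I,J'},
\]
where \(v=\pm(e_a-e_{a+1})\) and \(\eps\in(0,1)\) is the fractional-part datum of the alcove \(\Dl\) at the indices \(k',l'\).

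The technical core is the Newton-polytope lemma: if \(A=(1-cz^v)B\) with \(c\neq 0\) and \(B\) a (possibly fractionally monomially-shifted) Laurent polynomial, then \(\Ne(B)\subset\Ne(A)\ominus[0,v]\), the Minkowski difference. This is verified slice-by-slice along affine lines parallel to \(v\): in one variable, multiplication by \(1-cz\) sends a Laurent polynomial with Newton polytope \([m,M]\) to one with Newton polytope exactly \([m,M{+}1]\) (the extreme coefficients do not cancel), so inverse division shrinks each slice by \([0,v]\). Combining with the inductive hypothesis, each summand on the right of the displayed identity has Newton polytope inside \([0,v]+O_{J',J'}\)---the fractional shift \((z^v)^{1-\eps}\) lies inside \([0,v]\) since \(1-\eps\in(0,1)\)---so \(\Ne(A)\subset[0,v]+O_{J',J'}\). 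The standard convex-geometric identity \(([0,v]+O_{J',J'})\ominus[0,v]=O_{J',J'}\) then gives \(\Ne(K_a(\bW^\Dl_{I,J}))\subset O_{J',J'}\), and Lemma~\ref{lem:extralem} concludes \(O_{I,J}\subset K_a(O_{J',J'})=O_{J,J}\).

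The principal obstacle is precisely this Newton-polytope division step. The a priori bound on \(\Ne(A)\) overshoots \(O_{J',J'}\) by an entire segment \([0,v]\), and the argument succeeds only because the Minkowski difference recovers this slack exactly, leaving no room for surplus. The decisive quantitative input is that the fractional-power shift in \Ref{ee2}--\Ref{ee3} stays strictly inside \([0,v]\), which holds precisely because \(\nu\) lies in the interior of its alcove so that each \(\eps\in(0,1)\) is strict.
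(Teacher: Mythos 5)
Your proof is correct and follows essentially the same route as the paper: descending induction on $\leq_{\>\id}$ from the maximal partition, the base case via the triangularity Lemma~\ref{lem:triang}, the inductive step via the exchange identities \Ref{ee1}--\Ref{ee4} together with Lemma~\ref{lem:extralem}, the observation that the fractional shift $(z_a/z_{a+1})^{1-\eps}$ lies in the segment $\Ne\:(1-hz_a/z_{a+1})$, and the Minkowski cancellation law. Your packaging of the division step as a slice-by-slice Minkowski-difference lemma is just a rephrasing of the paper's step of adding $T$ to both sides and cancelling.
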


\begin{proof}
We will prove by induction on $J$ that \,$O_{I\<,\:J}\subset O_{J,\:J}$
\,for all \,$I\<\in\Il$\,. The induction employs the partial ordering
\,$\leq_{\>\id}$\,. The base of induction is
\be
J\>=\,\Imx=\,\bigl(\:\{\:n-\la_1\<+1\lc n\}\,,\>
\{\:n-\la_1\<-\la_2\<+1\lc n-\la_1\:\}\,,\,\ldots\,,\:\{\:1\lc\la_N\:\}
\:\bigr)\,.
\ee
Then \,$I\leq_{\>\id}\<J$ \,for all \,$I\<\in\Il$\,. Hence by
Lemma~\ref{lem:triang}, \,$\bW^\Dl_{I\<,\:J}=0$ \,for \,$I\ne J$\,,
and the statement holds.

\vsk.2>
If \,$J\ne\Imx$, let \,$a$ \,be such that $J<_{\>\id}\<s_{a\<,a+1}(J)=\Jt$\,.
Let \,$I\<\in\Il$ \,and define \,$k,l$ \,by the rule: \,$a\in I_k$\,,
$a+1\in I_l$\,. If \,$k=l$\,, then
\vvn.3>
\be
O_{I\<,\:J}\,=\,K_a(O_{I\<,\:\Jt})\subset K_a(O_{\Jt\<,\:\Jt})\,=\,O_{J,\:J}\,,
\vv.3>
\ee
where the first equality follows from formula \Ref{ee3}\:, the containment
holds by the induction assumption, and the last equality is due to
Lemma~\ref{lem:extralem}.

\vsk.2>
If \,$k<l$\,, let \,$O'$ be the Newton polytope of
\vvn.2>
\be
K_a\bigl(\bW^\Dl_{I,s_{a\<,a+1}(J)}\bigr)\,=\,
\frac{1-z_a/z_{a+1}}{1-hz_a/z_{a+1}}\;\bW^\Dl_{s_{a\<,a+1}(I),J}\:+\:
(1-h)\,\frac{(z_a/z_{a+1})^{1-\eps_{k,l}}}{1-hz_a/z_{a+1}}\;\bW^\Dl_{I,J}\,,
\vv.3>
\ee
see~(\ref{ee4})\:. Let \;$T=\:\Ne\:(1-z_a/z_{a+1})=\:\Ne\:(1-hz_a/z_{a+1})$
\,(an interval), and \,$p=\Ne\:\bigl((z_a/z_{a+1})^{1-\eps_{k,l}}\bigr)$
\,(one point)\:. From the definition of \,$O'$ we obtain
\vvn.3>
\be
T+O'=\,\chl\>\bigl(T+\Ne\:(\bW^\Dl_{s_{a\<,a+1}(I),\:\Jt})\:,
\,p\:+\Ne\:(\bW^\Dl_{I\<,\:\Jt})\bigr)\,,
\vv.3>
\ee
where \,$\chl$ \,stands for convex hull, and \;$+$ \;for the Minkowski sum.
Since \,$1-\eps_{k,l}\in(0,1)$\,, we have \,$p\subset T$\:, and therefore,
\vvn.3>
\be
T+O'\subset\chl\:\bigl(T+\Ne\:(\bW^\Dl_{s_{a\<,a+1}(I),\:\Jt})\:,
T+\Ne\:(\bW^\Dl_{I\<,\:\Jt})\bigr)\,=\,
T+\chl\:\bigl(\Ne\:(\bW^\Dl_{s_{a\<,a+1}(I),\:\Jt})\:,\>
\Ne\:(\bW^\Dl_{I,\Jt})\bigr)\,.
\vv.3>
\ee
By the induction assumption, both \;$\Ne\:(\bW^\Dl_{s_{a\<,a+1}(I),\Jt})$
\vvn.16>
\,and \;$\Ne\:(\bW^\Dl_{I,\Jt})$ \,are contained in \,$O_{\Jt\<,\:\Jt}$\:.
Thus \;$T+O'\?\subset\:T+O_{\Jt\<,\:\Jt}$\,, which by cancellation law yields
\,$O'\?\subset\:O_{\Jt\<,\:\Jt}$\,. Therefore,
\vvn.2>
\be
O_{I\<,\:J}\,=\,K_a(O')\subset K_a(O_{\Jt\<,\:\Jt})\,=\,O_{J,\:J}\,,
\vv.2>
\ee
where the first equality follows from formula \Ref{ee2} and the last equality
is due to Lemma~\ref{lem:extralem}.

\vsk.2>
The case \,$k>l$ \,is proved similarly.
\end{proof}

\begin{example}
For \,$N=2$\,, \,$\la=(1,2)$\,, let
\be
I_1=\:(\{1\},\{2,3\})\,,\quad I_2=\:(\{2\},\{1,3\})\,,\quad
I_3=\:(\{3\},\{1,2\})\,,
\vv.2>
\ee
and let \,$(\nu_1,\nu_2)\in\R^2$ \,be such that
\;$\nu_2\<-\nu_1\<=m_{1,2}\<+\eps_{1,2}$\,, where \,$\eps_{1,2}\<\in(0,1)$\,.
Then we have
\vvn.3>
\begin{align*}
W^\Dl_{I_1}\,&{}=\,(1-h)\>(t/z_1)^{m_{1,2}}\>(1-z_2/t)(1-z_3/t)\,,
\\[4pt]
W^\Dl_{I_2}\,&{}=\,(1-h)\>(t/z_2)^{m_{1,2}}\>(1-hz_1/t)(1-z_3/t)\,,
\\[4pt]
W^\Dl_{I_3}\,&{}=\,(1-h)\>(t/z_3)^{m_{1,2}}\>(1-hz_1/t)(1-hz_2/t)\,.
\\[-23pt]
\end{align*}
Hence
\vvn-.6>
\begin{align*}
O_{I_1,I_1}\,&{}=\,\Ne\:\bigl((1-h)\>(1-z_2/z_1)\>(1-z_3/z_1)\cdot
z_1^{-\nu_1}\:(z_2z_3)^{-\nu_2}\bigr)\,,
\\[4pt]
O_{I_2,I_1}\,&{}=\,\Ne\:\bigl((1-h)^2\>(z_1/z_2)^{m_{1,2}}\>(1-z_3/z_1)\cdot
z_2^{-\nu_1}\:(z_1z_3)^{-\nu_2}\bigr)\,,
\\[4pt]
O_{I_3,I_1}\,&{}=\,\Ne\bigl((1-h)^2\>(z_1/z_3)^{m_{1,2}}\>(1-hz_2/z_1)\cdot
z_3^{-\nu_1}\:(z_1z_2)^{-\nu_2}\bigr)\,.
\end{align*}
Therefore, the statements \,$O_{I_2,\:I_1}\subset O_{I_1,\:I_1}$ and
\,$O_{I_3,\:I_1}\subset O_{I_1,\:I_1}$ after arranging the $\zz$-monomials
to the left\:-hand sides reduce to the statements
\vvn.3>
\be
\Ne\:\bigl((1-h)^2\>(1-z_3/z_1)\>(z_2/z_1)^{\eps_{1,2}}\bigr)\>\subset\,
\Ne\bigl((1-h)\>(1-z_2/z_1)\>(1-z_3/z_1)\bigr)
\vv-.5>
\ee
and
\vvn-.1>
\be
\Ne\bigl((1-h)^2\>(1-hz_2/z_1)\>(z_3/z_1)^{\eps_{1,2}}\bigr)\>\subset\,
\Ne\bigl((1-h)\>(1-z_2/z_1)\>(1-z_3/z_1)\bigr)\,,
\vv.2>
\ee
respectively, see Figure 1.
\end{example}


\begin{figure}
\centering
\begin{tikzpicture}[scale=.9] 

\coordinate (A) at (0,0);
\coordinate (B) at (3,0);
\coordinate (C) at (4.5,2.5981);
\coordinate (D) at (1.5,2.5981); 

\coordinate (E) at (-3,0);
\coordinate (F) at (1.5,-2.5981); 

\coordinate (G) at (1,0);
\coordinate (H) at (3.5,.866);
\coordinate (I) at (2.5,2.5981);
\coordinate (J) at (.5,.866);

\draw [blue, thick] (A) -- (D);
\draw [blue, thick] (A) -- (E);
\draw [blue, thick] (A) -- (F);

\draw (-3.2,0) node [left] {$\dfrac{z_1}{z_2}$};
\draw (1.2,2.6) node [left] {$\dfrac{z_3}{z_1}$};
\draw (2.3,-2.5) node [left] {$\dfrac{z_2}{z_3}$};
\draw (-.05,.4) node [left] {$1$};
\draw (3.9,-.4) node [left] {$\dfrac{z_2}{z_1}$};

	\filldraw[dashed, 
	draw=gray, 
	fill=gray!20, 
	] (A)
-- (B)
-- (C)
-- (D);

\draw [black, thick] (G) -- (I);
\draw [black, thick] (H) -- (J);

\draw [red, thick] (A) -- (B);
\draw [red, thick] (A) -- (D);

\draw[fill=black] (G) circle (2pt) ;
\draw[fill=black] (H) circle (2pt) ;
\draw[fill=black] (I) circle (2pt) ;
\draw[fill=black] (J) circle (2pt) ;

\draw[fill=black] (A) circle (2pt) ;
\draw[fill=black] (D) circle (2pt) ;
\draw[fill=black] (E) circle (2pt) ;
\draw[fill=black] (F) circle (2pt) ;

\draw[fill=black] (B) circle (2pt) ;

\draw [-latex, red, thick] (4.5,0) -- (5,.866) node [right] {$\left(\dfrac{z_3}{z_1}\right)^{\varepsilon_{1,2}}$};
\draw [-latex, red, thick] (2,3.4641) -- (3,3.4641) node [above] {$\left(\dfrac{z_2}{z_1}\right)^{\varepsilon_{1,2}}$};

\end{tikzpicture}
\caption{$\Ne\left((1-z_3/z_1)(z_2/z_1)^{\eps_{1,2}}\right),\Ne\left((1-hz_2/z_1)(z_3/z_1)^{\eps_{1,2}}\right)$ are contained in $\Ne((1-z_2/z_1)(1-z_3/z_1))$.}
\end{figure}

\section{Partial flag varieties}
\label{Preliminaries from Geometry}

\subsection{Definitions}
\label{sec Partial flag varieties}

Fix natural numbers $N, n$. Let \,$\bla\in\Z^N_{\geq 0}$, \,$|\bla|=\la_1\lsym+\la_N =n$.
Consider the partial flag variety
\;$\Fla$ parametrizing chains of subspaces
\vvn.2>
\be
0\,=\,F_0\subset F_1\lsym\subset F_N =\,\C^n
\vv.2>
\ee
with \;$\dim F_i/F_{i-1}=\la_i$, \;$i=1\lc N$.
Denote by \,$\tfl$ the cotangent bundle of \;$\Fla$, and let $\pi: \tfl \to
\Fla$ be the projection of the bundle.

Let $I=(I_1\lc I_N)$ be a partition of $\{1\lc n\}$ into disjoint subsets
$I_1\lc I_N$. Denote $\Il$ the set of all partitions $I$ with
$|I_j|=\la_j$, \;$j=1\lc N$.

Let $\epsi_1\lc\epsi_n$ be the standard basis of $\C^n$.
For any $I\?\in\Il$, let $x_I\in\Fla$ be the point corresponding to the
coordinate flag $F_1\lsym\subset F_N$, where $F_i$ \,is the span of the
standard basis vectors \;$\epsi_j\in\C^n$ with \,$j\in I_1\lsym\cup I_i$.
We embed $\Fla$ in $\tfl$ as the zero section and consider the points
$x_I$ as points of $\tfl$.

\subsection{Schubert cells, conormal bundles}
\label{sec:flagvar}
For any $\si\in S_n$, we consider the
coordinate flag in $\C^n$,
\vvn-.4>
\be
V^{\si}\>:\,0\,=\,V_0 \subset V_1\lsym\subset V_n=\,\C^n\>,
\vv.2>
\ee
where \,$V_i$ \>is the span of \,$\epsi_{\si(1)}\lc\epsi_{\si(i)}$\,.
For $I\?\in\Il$ we define the {\it Schubert cell}
\vvn.3>
\beq
\label{OmsI}
\Om_{\si\<,\>I}\>=\,\{\:F\in \Fla\ \,|\
\dim (F_p\cap V^\si_q) = \#\{ i \in I_1\lsym\cup I_p \ |\ \si^{-1}(i)\leq q\} \
\forall p\leq N, \forall q\leq n \}\,.\kern-.6em
\vv.2>
\eeq
The Schubert cell $\Om_{\si\<,\>I}$ is an affine space of dimension
\vvn.2>
\beq
\label{ellsI}
\ell_{\si\<,\>I}\:=\,\#\{\:(a,b)\in\{1\lc n\}^2\ |\ \:
\si(a)\in I_j\>,\,\,\si(b)\in I_k\>,\,\,a>b\,,\,\,j<k\>\}\,.\kern-1em
\vv.1>
\eeq
For a fixed $\si$, the flag manifold is the disjoint union of the cells $\Om_{\si\<,\>I}$. We have $x_I\in \Om_{\si\<,\>I}$,
see e.g. \cite[Sect.2.2]{FP}.

\vsk.2>
For $\si\in S_n$, recall the combinatorial partial ordering \,$\leq_{\:\si}$
on \,$\Il$ \,defined in Section~\ref{sec:e-sub}. It is the same as
the {\it geometric} partial ordering on \,$\Il$\,: \,for \,$I,J\<\in\Il$\,,
we have that \,$J\leq_{\:\si}\<I$ \,if and only if \;$x_J$ \>lies in
the closure of \,$\Om_{\si\<,\>I}$\,.

\vsk.2>
The Schubert cell $\Om_{\si\<,\>I}$ is a smooth submanifold of $\Fla$,
hence we can consider its conormal space
\vvn-.3>
$$
C\Om_{\si\<,\>I}=\{\al\in \pi^{-1}(\Om_{\si\<,\>I}) \ | \
\al(T_{\pi(\al)}\Om_{\si\<,\>I})=0\} \subset\tfl\,,
\vv.2>
$$
cf.~the Remark in Section \ref{sec:edefs}.
The conormal space $C\Om_{\si\<,\>I}$ is the total space of a vector
subbundle of $\tfl$ over $\Om_{\si\<,\>I}$. The rank of this subbundle
is $\dim \Fla - \dim \Om_{\si\<,\>I}$. Hence, as a manifold
$C\Om_{\si\<,\>I}$ is an affine cell of dimension $\dim \Fla$.
In particular, its dimension is independent of $\si, I$.
Define
\vvn.3>
\beq
\label{SLOPE}
\Slope_{\si\<,\>I}\>=\,\tbigcup_{J \leq_{\:\si} I} C\Om_{\si\<,\>J}.
\eeq

\subsection{Torus action}
The diagonal action of the torus $(\C^\times)^n$ on $\C^n$ induces an action
on $\Fla$, and hence on the cotangent bundle $\tfl$.
We extend this $(\C^\times)^n$-action to the action of
$T=(\C^\times)^n\times\C^\times$ so that the extra $\C^\times$ acts
on the fibers of $\tfl\to\Fla$ by multiplication.

The fixed points of $T$ acting on $\tfl$ are the points $x_I$ described above.

\smallskip
In the next sections we consider the equivariant $K\<$-theory and equivariant elliptic cohomology of $\tfl$ with respect to this $T$-action.

\subsection{Equivariant $K\<$-theory of $\tfl$}
\label{sec:equivK}

We consider the equivariant $K\<$-theory algebra
\\
$K_T(\tfl)$.
Our general reference for equivariant $K\<$-theory is \cite[Ch.5]{ChG}.

\vsk.2>
Denote $S_\bla=S_{\la_1}\!\lsym\times S_{\la_N}$ the product of symmetric groups.
Consider variables $\Gm_i=\{\gm_{i,1}\lc\gm_{i,\la_i}\}$, $i=1\lc N$.
Let \;$\GG=(\Gm_1\:\lsym;\Gm_N)$. The group $S_\bla$ acts on the set
$\GG$ by permuting the variables with the same first index.
Let $\C[\GG^{\pm1}]$ be the algebra of Laurent polynomials in variables
$\gm_{i,j}$ and $\CGs$ the subalgebra of
invariants with respect to the $S_\bla$-action.

\vsk.2>
Consider variables $\zz=\{\zzz\}$ and $h$. The group $S_n$ acts
on the set $\zz$ by permutations.
Let $\Czh$ be the algebra of Laurent polynomials in
variables $\zz,h$ and $\Czh^{\>S_n}$ the subalgebra of
invariants with respect to the $S_n$-action. We have
\vvn.2>
\beq
\label{Hrel}
K_T(\tfl)\,=\,\CGs\?\ox\Czh\;\big/\bigl\bra\:f(\GG)=f(\zz)\quad
\on{for\ any} \;f\in\CZS\bigr\ket\,.
\eeq
Here $\gm_{i,j}$ correspond to (virtual) line bundles also denoted
by $\gm_{i,j}$ with
\be
\tbigoplus_{j=1}^{\la_i}\,\gm_{i,j}\>=\>F_i/F_{i-1},
\ee
while $z_a$ and $h$ correspond to the factors of
$T=(\C^\times)^n\times\C^\times$.

\vsk.2>
The algebra \,$K_T(\tfl)$ \>is a module over \,$K_T(\pti\>;\C)=\Czh$.

\subsection{Equivariant localization -- moment map description}
\label{sec:loc}

Consider the {\it equivariant localization} map
\vvn.2>
\beq
\label{eqn:loc}
\Loc\,:\,K_T(\tfl)\,\to\,K_T((\tfl)^T)\,=\,\tbigoplus_{I\in\Il} K_T(x_I)
\eeq
whose components are the restrictions to the fixed points $x_I$. Restriction of the class
$\alpha\in K_T(\tfl)$ to the fix point $x_I$ will be denoted by $\alpha|_{x_I}$. In terms of the variables $\GG$ restriction to $x_I$ is the substitution
\beq
\label{LocI}
\{\gm_{k,1}\lc\<\gm_{k,\la_k}\}\mapsto\{z_a\,|\;a\in I_k\}
\qquad \text{for all}\;\,\,k=1\lc N\:.
\eeq
Equivariant localization theory (see e.g. \cite[Ch.5]{ChG},
\cite[Appendix]{RoKu}) asserts that $\Loc$ is an injection of algebras.
Moreover, an element of the right-hand side is in the image of \;$\Loc$
\,if the difference of the $I$-th and $s_{I\<,\:J}(I)$-th components is divisible
by $1-z_i/z_j$ in $\Czh$ for all $I\?\in\Il$ and $i,j\in \{1\lc n\}$.
Here $s_{I\<,\:J}(I)$ is the partition obtained from $I$ by switching the numbers
$i$ and $j$.

\subsection{Normal Euler classes of conormal spaces at torus fixed points}
\label{sec:edefs}
Define the following classes
\vvn.3>
\beq
\label{ehor}
e_{\si\<,\>I\<,+}^{\hor}=\,\prod_{k<l}\>\prod_{\si(a)\in I_k}\<
\prod_{\satop{\si(b)\in I_l}{b<a}}\!(1-z_{\si(a)}/z_{\si(b)})\,,
\kern1.2em
e_{\si\<,\>I\<,-}^{\hor}=\,\prod_{k<l}\>\prod_{\si(a)\in I_k}\<
\prod_{\satop{\si(b)\in I_l}{b>a}}\!(1-z_{\si(a)}/z_{\si(b)})\,,\kern-.6em
\eeq
\beq
\label{evertr}
e_{\si\<,\>I\<,+}^{\vert}=\,\prod_{k<l}\,\prod_{\si(a)\in I_k}\<
\prod_{\satop{\si(b)\in I_l}{b>a}}\!(1-hz_{\si(b)}/z_{\si(a)})\,,
\kern1.2em
e_{\si\<,\>I\<,-}^{\vert}=\,\prod_{k<l}\,\prod_{\si(a)\in I_k}\<
\prod_{\satop{\si(b)\in I_l}{b<a}}\!(1-hz_{\si(b)}/z_{\si(a)})\,,\kern-.6em
\eeq
in \,$K_T(x_I)=\Czh$\,.
We also set $e_{\si\<,\>I}=\:e_{\si\<,\>I\<,-}^{\hor}\>e_{\si\<,\>I\<,-}^{\vert}$\,.

\vsk.2>
Recall that if \,$\C^\times\!$ acts on a line \,$\C$ \>by
\,$\al\cdot x=\al^rx$,
then the \,$\C^\times\<$-equivariant Euler class of the line bundle
$\C \to \{0\}$ is $e(\C\to \{0\})=1-z^r \in K_{\C^\times}($point$)=\C[z^{\pm1}]$.
Thus standard knowledge on the tangent bundle of flag manifolds imply that
\vvn.2>
\beq\label{eqn:einter1}
e(T\Om_{\si\<,\>I}|_{x_I})=\,e_{\si\<,\>I\<,+}^{\hor}\,, \qquad
e(\nu(\Om_{\si\<,\>I}\subset\Fla)|_{x_I})=\,e_{\si\<,\>I\<,-}^{\hor}\,,
\vv.2>
\eeq
where \,$\nu(A\subset B)$ means the normal bundle of a submanifold $A$
in the ambient manifold $B$, and $\xi|_x$ means the restriction of
the bundle $\xi$ over the point $x$ in the base space.
Therefore we also have
\vvn-.2>
\be
e(C\Om_{\si\<,\>I}|_{x_I})\>=\>e_{\si\<,\>I\<,+}^{\vert}\,,\qquad
e((\pi^{-1}(\Om_{\si\<,\>I})-C\Om_{\si\<,\>I})|_{x_I})\>=\>e_{\si\<,\>I\<,-}^{\vert}\,,
\vv.2>
\ee
where $C\Om_{\si\<,\>I}$ and $\pi^{-1}(\Om_{\si\<,\>I})$ are
considered bundles over $\Om_{\si\<,\>I}$.
Now consider \,$C\Om_{\si\<,\>I}$ \,as a(n open) submanifold of \,$\tfl$\>.
Then we obtain
\vvn.2>
\beq\label{eqn:enu}
e(\nu( C\Om_{\si\<,\>I} \subset\tfl)|_{x_I})\,=
\,e^{\hor}_{\si,I,-}\>e^{\vert}_{\si,I,-}\>=\,e_{\si\<,\>I}\,.
\vv.2>
\eeq

\begin{rem}
For a permutation \,$\si\in S_n$\,, let the cocharacter
$\chi:\C^\times\to(\C^\times)^n$, \,$t\mapsto (t^{w_1},\ldots,t^{w_n})$
satisfies the property: $w_{\sigma(a)}>w_{\sigma(b)}$ for all $a>b$.
The formulas of this subsection show that
$C\Om_{\si\<,\>I}=\{x\in \tfl\ |\ \lim_{t\to 0} \chi(t)x=x_I\}$.
This latter set is a special case for $\tfl$ of the notion ``stable leaf'' of the fixed point $x_I$
corresponding to the ``chamber'' $\{(w_1,\ldots,w_n)\in \R^n\ |\ w_{\si(a)}>w_{\si(b)}$ for all
$a>b)\}\subset\on{Cochar}\bigl((\C^\times)^n\bigr)\otimes_\Z\R
=\R^n$ in \cite[Section 3.2.2]{MO1}. Thus the notion of ``chambers'' of
\cite{MO1} correspond to the permutations $\si\in S_n$ in this paper.
\end{rem}

\section{$K\<$-theoretic stable envelopes}
\label{sec:5}

\subsection{Definition of $K\<$-theoretic stable envelopes}

The $K\<$-theoretic stable envelope maps for Nakajima varieties $X$ are defined by Maulik and Okounkov in \cite{MO2}. That definition is reproduced in \cite[Section 9.1]{O} and \cite[Section 2.1]{OS}. Here we recall their axiomatic definition of stable envelope classes (images of coordinate vectors under the stable envelope map) in the special case $X=\tfl$ acted upon by $(\C^*)^n\times \C^*$.

Let $\Dl\subset \R^N$ be an alcove and $\nu\in\Dl$.
Define the
virtual ``slope'' bundle
\be
S_\nu=\prod_{k=1}^N \prod_{a=1}^{\la_k} \gamma_{k,i}^{-\nu_k} \ \in \ \Pic (\tfl)\ox_\Z \R.
\ee
Observe that $S_\nu$ restricted to the fixed point $x_I$ is $S_I$ from Section \ref{sec:Newton}.

\begin{defn} (\cite{MO2}, \cite[Section 9.1]{O}, \cite[Section 2.1]{OS}) \label{def:o}
An element $\Stab^\Dl_{\si\<,\>I}\in K_T(\tfl)$ for an alcove $\Dl\subset \R^N$, $\si\in S_n$, $I\in\Il$
is called the {\it stable envelope class}, if it satisfies the following axioms:
\renewcommand{\theenumi}{\Roman{enumi}}
\begin{enumerate}
\item{} \label{o1} $\Stab^\Dl_{\si\<,\>I}$ is supported on $\Slope_{\si\<,\>I}$;
\item{} \label{o2} $\Stab^\Dl_{\si\<,\>I}|_{x_I}= h^{\codim(\Om_{\si\<,\>I}\subset \Fla)/2} P_{\si\<,\>I}e_{\si\<,\>I}$;
\item{} \label{o3}
$
\Ne\:( \Stab^\Dl_{\si\<,\>I}|_{x_J} \cdot S_\nu|_{x_I} ) \subset
\Ne\:( \Stab^\Dl_{\si\<,\>J}|_{x_J} \cdot S_\nu|_{x_J} ),
$
if $\nu \in \Dl$.
\end{enumerate}
\end{defn}

Observe that while condition (\ref{o3}) depends on $\nu \in \R^N$, the class $\Stab^\Dl_{\si\<,\>I}$ only depends on the alcove $\nu$ belongs to. Namely, the inclusion in (\ref{o3}) can be reformulated as
\be
\Ne\:( \Stab^\Dl_{\si\<,\>I}|_{x_J} ) +
\Ne\:(\frac{ S_\nu|_{x_I}}{S_\nu|_{x_J}})
\subset
\Ne\:( \Stab^\Dl_{\si\<,\>J}|_{x_J} ),
\ee
where $+$ is the Minkowski sum and $\Ne\:(\frac{ S_\nu|_{x_I}}{S_\nu|_{x_J}})$ is a point. Thus
\Ref{o3} says that the Newton polytope of $ \Stab^\Dl_{\si\<,\>I}|_{x_J}$ lies inside the Newton polytope of $\Stab^\Dl_{\si\<,\>J}|_{x_J} $ when shifted by the vector $\Ne\:(\frac{ S_\nu|_{x_I}}{S_\nu|_{x_J}})$ which depends on $\nu$ in $\Dl$.

\begin{rem} In fact, in \cite{MO2,O,OS} the definition of stable envelope classes also depends on a choice of distinguishing half of the normal directions at each torus fixed point. For $\tfl$ there is a natural choice -- distinguishing the cotangent directions -- that we make throughout the paper.
Moreover, condition (\ref{o2}) in the cited papers reads as follows
\beq\label{eqn:omain}
\Stab^\Dl_{\si\<,\>I}|_{x_I}=
(-1)^{ \dim (\text{distinguished directions in $C\Om_{\si\<,\>I}$})}
\sqrt{ \frac{ \det( \nu_{\si\<,\>I} )}{\det( \tfl|_{x_I})} }
e(\nu_{\si\<,\>I}),
\eeq
where $\nu_{\si\<,\>I}$ is the normal bundle of $C\Om_{\si\<,\>I}$ at $x_I$. However, using the notation
\be
d_{\si,I,+}^{\hor}=
\prod_{k<l}\prod_{\si(a)\in I_k} \prod_{\satop{\si(b)\in I_l}{b<a}} \frac{z_{\si(b)}}{z_{\si(a)}},
\qquad
d_{\si,I,-}^{\hor}=
\prod_{k<l}\prod_{\si(a)\in I_k} \prod_{\satop{\si(b)\in I_l}{b>a}} \frac{z_{\si(b)}}{z_{\si(a)}},
\ee
\be
d_{\si,I,+}^{\vert}=
\prod_{k<l}\prod_{\si(a)\in I_k} \prod_{\satop{\si(b)\in I_l}{b>a}} \frac{z_{\si(a)}}{hz_{\si(b)}},
\qquad
d_{\si,I,-}^{\vert}=
\prod_{k<l}\prod_{\si(a)\in I_k} \prod_{\satop{\si(b)\in I_l}{b<a}} \frac{z_{\si(a)}}{hz_{\si(b)}}
\ee
the right hand side of equation (\ref{eqn:omain}) is
\be
\prod_{k<l}\prod_{\si(a)\in I_k} \prod_{\satop{\si(b)\in I_l}{b>a}} (-1)
\cdot
\sqrt{
\frac{
d^{\hor}_{\si,I,-} d^{\vert}_{\si,I,-}
}{
d^{\vert}_{\si,I,+} d^{\vert}_{\si,I,-}
}
}
\cdot e_{\si\<,\>I}=
\ee
\be
\prod_{k<l}\prod_{\si(a)\in I_k} \prod_{\satop{\si(b)\in I_l}{b>a}} \left(-\frac{z_{\si(b)}}{z_{\si(a)}}\right)
h^{\codim(\Om_{\si\<,\>I}\subset \Fla)/2} e_{\si\<,\>I}=h^{\codim(\Om_{\si\<,\>I}\subset \Fla)/2} P_{\si\<,\>I} e_{\si\<,\>I},
\ee
as stated in the definition above.
\end{rem}

It is proved in \cite[Theorem 9.2.1]{O} that stable envelope classes -- if exist -- are unique. Their existence statement for Nakajima varieties is announced to be published in \cite{MO2}, see \cite[9.1.1]{O}, cf.~also Theorem 2 and Proposition 3.5 in \cite{AO}.

\subsection{Weight functions project to stable envelope classes}

We substitute the variables
\be
t^{(k)}_1,\ldots,t^{(k)}_{\la^{(k)}}
\quad\mapsto\quad
\gamma_{1,1},\ldots,\gamma_{1,\la_{1}},\ \
\gamma_{2,1},\ldots,\gamma_{2,\la_{2}},\ \
\ldots,\ \
\gamma_{k,1},\ldots,\gamma_{k,\la_{k}}
\ee
into the function $\Wt^\Dl_{\si\<,\>I}(\ttt,\zz,h)$ (see Section \ref{sec:trigweight}). The resulting function will be denoted by $\Wt^\Dl_{\si\<,\>I}(\GG,\zz,h)$. Observe that we have
\be
\Wt^\Dl_{\si\<,\>I}(\GG,\zz,h)|_{x_J}
=
\Wt^\Dl_{\si\<,\>I}(z_J,\zz,h).
\ee
In view of Lemma \ref{lem:WK divisible by E} this implies that the function $\Wt^\Dl_{\si\<,\>I}(\GG,\zz,h)$ -- although not a Laurent polynomial itself -- has the property that all its ${x_J}$ restrictions are Laurent polynomials. The restrictions also satisfy the divisibility property of Section \ref{sec:loc}, hence equivariant localization theory implies that there is a unique element in $K_T(\tfl)$ whose $x_J$ restrictions are the same as those of $\Wt^\Dl_{\si\<,\>I}(\GG,\zz,h)$. By slight abuse of language we will denote this element of $K_T(\tfl)$ by $[\Wt^\Dl_{\si\<,\>I}(\GG,\zz,h)]$.

\begin{thm}\label{thm:OisW}
Stable envelope classes $\Stab^\Dl_{\si\<,\>I}$ exist for $\tfl$, namely,
\beq\label{eqn:StabIsW}
\Stab^\Dl_{\si\<,\>I}=
h^{\codim(\Om_{\si\<,\>I}\subset \Fla)/2}
\cdot
[\Wt^\Dl_{\si\<,\>I}(\GG,\zz,h)].
\eeq
\end{thm}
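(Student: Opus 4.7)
The plan is to verify the three axioms of Definition \ref{def:o} for the class
\[
\mathcal{S}_{\si,I}\,:=\,h^{\codim(\Om_{\si,I}\subset\Fla)/2}\cdot[\Wt^\Dl_{\si,I}(\GG,\zz,h)]
\]
appearing on the right\:-hand side of \Ref{eqn:StabIsW}. Well\:-definedness of $\mathcal{S}_{\si,I}$ as an element of $K_T(\tfl)$ has already been established in the discussion preceding the theorem, via Lemma \ref{lem:WK divisible by E} and the image\:-of\:-$\Loc$ characterization of Section \ref{sec:loc}; its restriction to $x_J$ is the Laurent polynomial $h^{\codim(\Om_{\si,I}\subset\Fla)/2}\cdot\Wt^\Dl_{\si,I}(\zz_J,\zz,h)$.

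Axiom \Ref{o2} is immediate from Lemma \ref{lem:expected}, which identifies $\Wt^\Dl_{\si,I}(\zz_I,\zz,h)$ with $P_{\si,I}\cdot e_{\si,I}$. Axiom \Ref{o1} follows from Lemma \ref{lem:triang}: the restriction $\mathcal{S}_{\si,I}|_{x_J}$ vanishes whenever $J\not\leq_{\:\si}I$, so $\mathcal{S}_{\si,I}$ vanishes at every $T$\:-fixed point outside $\Slope_{\si,I}=\bigcup_{J\leq_{\:\si}I}C\Om_{\si,J}$, and since $\tfl$ carries only finitely many $T$\:-fixed points, a standard equivariant $K$\:-theoretic localization argument forces $\mathcal{S}_{\si,I}$ to be supported on the closed $T$\:-invariant subvariety $\Slope_{\si,I}$.

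Axiom \Ref{o3} is the substantive one, and essentially encodes Theorem \ref{thm:Newton}. I first handle the case $\si=\id$. Since $h^{\codim/2}$ is $\zz$\:-free and $S_\nu|_{x_I}=S_I(\zz)$, the required inclusion reduces, in the $\zz$\:-variables, to
\[
\Ne\bigl(\Wt^\Dl_I(\zz_J,\zz,h)\cdot S_I\bigr)\;\subset\;\Ne\bigl(\Wt^\Dl_J(\zz_J,\zz,h)\cdot S_J\bigr).
\]
Using the factorization $W^\Dl_I=\Wt^\Dl_I\cdot E$ from \Ref{Wtilde} with $E=E(\ttt,h)$ from \Ref{trigE}, the Minkowski\:-sum identity for Newton polytopes yields
\[
O_{I,J}\;=\;\Ne\bigl(\Wt^\Dl_I(\zz_J,\zz,h)\cdot S_I\bigr)\,+\,\Ne\bigl(E(\zz_J,h)\bigr),
\]
and similarly for $J$ in place of $I$. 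The key point is that $E(\zz_J,h)$ depends on $J$ but not on $I$; thus Theorem \ref{thm:Newton}'s inclusion $O_{I,J}\subset O_{J,J}$, together with the cancellation law for Minkowski sums, yields exactly the desired inclusion. For general $\si$, the identities \Ref{Wsitr} and \Ref{Wtilde} give $\Wt^\Dl_{\si,I}(\ttt,\zz,h)=\Wt^\Dl_{\si^{-1}(I)}(\ttt,z_{\si(1)},\dots,z_{\si(n)},h)$, so axiom \Ref{o3} for the triple $(\si,I,J)$ coincides with the $\si=\id$ instance for $(\si^{-1}(I),\si^{-1}(J))$ after the $\zz$\:-relabeling $z_a\mapsto z_{\si(a)}$, and the general case follows.

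The main obstacle is axiom \Ref{o3}, whose substance is entirely carried by Theorem \ref{thm:Newton}; axioms \Ref{o1} and \Ref{o2} are handled by invoking Lemmas \ref{lem:triang} and \ref{lem:expected} together with one standard localization remark. The remaining bookkeeping—Minkowski cancellation of the common factor $E(\zz_J,h)$, and relabeling to reduce to $\si=\id$—is routine once Theorem \ref{thm:Newton} is available.
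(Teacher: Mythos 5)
Your treatment of axioms (\ref{o2}) and (\ref{o3}) is fine and matches the paper's own argument (Lemma \ref{lem:expected} for the diagonal restriction; Theorem \ref{thm:Newton} plus Minkowski cancellation of the $I$-independent factor and the relabeling $z_a\mapsto z_{\si(a)}$ to reduce to $\si=\id$). The genuine gap is in axiom (\ref{o1}). You argue that since $\Wt^\Dl_{\si,I}(\zz_J,\zz,h)=0$ for $J\not\leq_{\:\si}I$ (Lemma \ref{lem:triang}), the class vanishes at all fixed points outside $\Slope_{\si,I}$, and a ``standard localization argument'' then gives support on $\Slope_{\si,I}$. This inference is false: being supported on $\Slope_{\si,I}$ means that the restriction of the class to the open complement $\tfl\setminus\Slope_{\si,I}$ vanishes in $K_T$ of that complement, and that complement is far from being exhausted by its fixed points. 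In particular it contains, for every $K\leq_{\:\si}I$, the locus $\pi^{-1}(\Om_{\si,K})\setminus C\Om_{\si,K}$, which has \emph{no} torus fixed points at all; the equivariant $K$-theory of such non-complete open sets is typically $R(T)$-torsion, so restriction to fixed points is not injective there and vanishing at fixed points of the complement does not force the restriction to the complement to vanish. (Already $K_{\C^\times}(\C\setminus\{0\})$ with the scaling action illustrates this.) So Lemma \ref{lem:triang} alone cannot yield the support axiom.

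This is exactly why the paper's proof of (\ref{o1}) is the substantive part of the argument and uses an input you never invoke: the divisibility of $\Wt^\Dl_{\si,I}(\zz_J,\zz,h)$ by the ``vertical'' Euler class $e^{\vert}_{\si,J,-}$ (Lemma \ref{lem:div_e_ver}). The paper converts divisibility by normal Euler classes into support statements via a Gysin-sequence criterion (Lemma \ref{lem:gysin}), and runs an induction over a linear refinement of $\leq_{\:\si}$ on the stratification by the $\pi^{-1}(\Om_{\si,J})$ (Lemma \ref{lem:econd}), ultimately writing $\om_{\si,I}=\sum_{J\leq_{\:\si}I}f_{IJ}(\zz,h)\,[\>\overline{\<C\Om_{\si,J}\<}\>]$ (Lemma \ref{lem:LEO}); each class $[\>\overline{\<C\Om_{\si,J}\<}\>]$ with $J\leq_{\:\si}I$ is supported on $\Slope_{\si,I}$, which gives the axiom. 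In this scheme Lemma \ref{lem:triang} only supplies the base of the induction. To repair your proof you would need either to reproduce this stratification/Gysin argument or to replace axiom (\ref{o1}) by its local version (divisibility of each restriction by $e^{\vert}_{\si,J,-}$), which is precisely the alternative formulation the paper notes in a remark — but then the needed statement is Lemma \ref{lem:div_e_ver}, not Lemma \ref{lem:triang}.
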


\begin{proof}
We denote the right hand side of (\ref{eqn:StabIsW}) by $\om_{\si\<,\>I}$ and we will prove that it satisfies axioms (\ref{o1})-(\ref{o3}).

Axiom (\ref{o2}) follows from Lemma \ref{lem:expected}. To prove axiom (\ref{o3}) observe that
$S_\nu|_{x_I}=S_I$ and hence Theorem \ref{thm:Newton} claims
\be
\Ne\:(W_I^\Dl(\zz_J,\zz,h)S_\nu|_{x_I}) \subset \Ne\:(W_J^\Dl(\zz_J,\zz,h)S_\nu|_{x_J}).
\ee
Writing this statement for the pair $\si^{-1}(I)$, $\si^{-1}(J)$ instead of $I, J$, we obtain
\be
\Ne\:(W_{\si^{-1}(I)}^\Dl(\zz_{\si^{-1}(J)},\zz,h)S_\nu|_{x_{\si^{-1}(I)}})
\subset
\Ne\:(W_{\si^{-1}(J)}^\Dl(\zz_{\si^{-1}(J)},\zz,h)S_\nu|_{x_{\si^{-1}(J)}}).
\ee
Replace $z_a$ with $z_{\si(a)}$ for $a=1,\ldots,n$, to get
\be
\Ne\:(W_{\si^{-1}(I)}^\Dl(\zz_{J},\zz_\si,h)S_\nu|_{x_I})
\subset
\Ne\:(W_{\si^{-1}(J)}^\Dl(\zz_{J},\zz_\si,h)S_\nu|_{x_{J}}),
\ee
where $\zz_\si=(z_{\si(1)},\ldots,z_{\si(n)})$. Using the definition of $W_{\si\<,\>I}$ functions this translates to
\be
\Ne\:(W_{\si\<,\>I}^\Dl(\zz_{J},\zz,h)S_\nu|_{x_I})
\subset
\Ne\:(W_{\si\<,\>J}^\Dl(\zz_{J},\zz,h)S_\nu|_{x_{J}}).
\ee
Since the ratio between $W_I^\Dl(\zz_J,\zz,h)$ and $\Wt_I^\Dl(\zz_J,\zz,h)$ is a universal expression (not depending on $I$) the cancellation property of Minkowski sums gives
\be
\Ne\:(\Wt_{\si\<,\>I}^\Dl(\zz_J,\zz,h)S_\nu|_{x_I})
\subset
\Ne\:(\Wt^\Dl_{\si\<,\>J}(\zz_J,\zz,h)S_\nu|_{x_J}),
\ee
what we wanted to prove.

The rest of this section is the proof of axiom (\ref{o1}).
In the proof we will consider restrictions of elements of $K_T(\tfl)$ to $x_J$,
to $\Om_{\si\<,\>J}$, and to $\pi^{-1}(\Om_{\si\<,\>J})$. Observe that
algebraically these three restrictions are the same maps, because the last two
sets are topological cells, equivariantly homotopy equivalent to the one-point
set $x_J$.

\begin{lem} \label{lem:econd}
Let $\prec$ be a linear order refining $\leq_{\:\si}$, and let $J^{(1)}\prec J^{(2)} \prec \ldots \prec
J^{(d)}$ be the elements of $\Il$, that is $d=|\Il|$. Let $a\in\{2,\ldots,d\}$. If a class $\om \in K_T(\tfl)$ restricted to
\be
\cup_{i=a}^d \pi^{-1}( \Om_{\si\<,\>J^{(i)}})
\ee
is 0, and $\om|_{x_{J^{(a-1)}}}=0$, then $\om$ restricted to
\be
\cup_{i=a-1}^d \pi^{-1}( \Om_{\si\<,\>J^{(i)}})
\ee
is also 0.
\end{lem}

\begin{proof} This is a well known argument involving a Mayer-Vietoris and a Gysin sequence, and depending on the non-vanishing of a normal Euler class (which holds for partial flag varieties). See for example \cite[Lemma 3.6]{FR} for a detailed argument in cohomology.
\end{proof}

\begin{lem} \label{lem:gysin}
Let $T$ act on the vector space $X$ with an invariant subspace $Y$. Let $e$ be the $T$-equivariant normal Euler class of $Y\subset X$. Then the following conditions are equivalent for a class in $K_T(X)$:
\begin{itemize}
\item it is supported on $Y$;
\item it is divisible by $e$.
\end{itemize}
\end{lem}

\begin{proof}
The statement follows from the exactness of the Gysin sequence
\be
\xymatrix{
K_T(Y)\ar[r]^{\cdot e} &
K_T(Y)\ar[r] &
K_T(X-Y)
}
\ee
where the second map is the composition of the isomorphism $K_T(Y)=K_T(X)$ and the restriction to $X-Y$.
\end{proof}

\begin{lem} \label{lem:LEO}
We have
\be
\om_{\si\<,\>I}=\sum_{J\leq_{\:\si} I} f_{IJ}(\zz,h)
[\>\overline{\<C\Om_{\si\<,\>J}\<}\>]
\ee
for some Laurent polynomials $f_{IJ}(\zz,h)$.
\end{lem}

\begin{proof}
Let $\prec$ be a total order refining $\leq_{\:\si}$, and let
$
J^{(1)} \prec J^{(2)} \prec \ldots \prec J^{(r)}=I
$
be the elements of $\Il$ that are $\leq_{\si} I$. Let $k\in\{0,\ldots, r\}$. We will prove the following statement by induction on $k$.

\smallskip
\noindent{\em (*k) There exist Laurent polynomials $f_{IJ^{(i)}}(\zz,h)$ for $i\in \{r-k+1,\ldots,r\}$ such that
\beq\label{eqn:ind}
\om_{\si\<,\>I}-
\sum_{i=r-k+1}^r f_{IJ^{(i)}}(\zz,h)[\>\overline{\<C\Om_{\si\<,\>J^{(i)}}\<}\>]
\eeq
is supported on $\cup_{i=1}^{r-k} \pi^{-1}(\Om_{\si\<,\>J^{(i)}})$.}
\smallskip

Statement (*k) for $k=0$ follows from the iterated application of Lemma~\ref{lem:econd}, using Lemma~\ref{lem:triang}.

For the induction step $k \mapsto k+1$, consider (\ref{eqn:ind}) restricted
to $x_{J^{(r-k)}}$. The term $\om_{\si\<,\>I}|_{x_{J^{(r-k)}}}$ is divisible
by $e^{\vert}_{\si\<,\>J^{(r-k)},-}$ by Lemma~\ref{lem:div_e_ver}. The classes
$[\>\overline{\<C\Om_{\si\<,\>J^{(i)}}\<}\>]$ appearing in (\ref{eqn:ind}),
restricted to $x_{J^{(r-k)}}$ are also divisible by
$e^{\vert}_{\si\<,\>J^{(r-k)},-}$, due to the fact that the Schubert cell
stratification is a Whitney stratification. Therefore (\ref{eqn:ind}),
restricted to $x_{J^{(r-k)}}$ is divisible by
$e^{\vert}_{\si\<,\>J^{(r-k)},-}$.

We claim that (\ref{eqn:ind}), restricted to $x_{J^{(r-k)}}$ is also divisible by $e^{\hor}_{\si\<,\>J^{(r-k)},-}$. This follows from the induction hypotheses and Lemma~\ref{lem:gysin}.
We obtained that (\ref{eqn:ind}) restricted to $x_{J^{(r-k)}}$ can be written as
$f_{IJ^{(r-k)}}(\zz,h) \cdot e^{\hor}_{\si\<,\>J^{(r-k)},-} e^{\vert}_{\si\<,\>J^{(r-k)},-}$ for some Laurent polynomial $f_{IJ^{(r-k})}(\zz,h)$. Therefore the class
\be
\left(
\om_{\si\<,\>I}-
\sum_{i=r-k-1}^r f_{IJ^{(i)}}(\zz,h)[\>\overline{\<C\Om_{\si\<,\>J^{(i)}}\<}\>]
\right)
-
f_{IJ^{(r-k)}}(\zz,h) [\>\overline{\<C\Om_{\si\<,\>J^{(r-k)}}\<}\>]
\ee
restricted to $x_{J^{(r-k)}}$ is 0. Since it is also 0 when restricted to $\cup_{i=r-k-1}^{r} \pi^{-1}(\Om_{\si\<,\>J^{(i)}})$ (by induction and the property of the order $\prec$), Lemma \ref{lem:gysin} gives the induction step.

Statement (*k) for $k=r$ proves Lemma \ref{lem:LEO}.
\end{proof}

The class $[\>\overline{\<C\Om_{\si\<,\>I}\<}\>]$ is supported on
$\Slope_{\si\<,\>I}$, and clearly $J\leq_{\:\si} I$ implies
$\Slope_{\si\<,\>J}\subset \Slope_{\si\<,\>I}$, hence Lemma~\ref{lem:LEO}
proves that $\om_{\si\<,\>I}$ satisfies axiom (\ref{o3}). This finishes the
proof of Theorem~\ref{thm:OisW}.
\end{proof}

\begin{rem}
Our arguments show that in the definition of $\Stab_{\si\<,\>I}$ axiom (\ref{o1}) can be replaced with its local version

(I') $\Stab^\Dl_{\si\<,\>I}|_{x_J}$ is divisible by $e^{\vert}_{\si\<,\>J,-}$.

\noindent as was done, for example, in \cite[Theorem 3.1]{RTV2}.
\end{rem}

\begin{rem}
The functions $f_{IJ}(\zz,h)$ appearing in Lemma~\ref{lem:LEO} are related to the notion of ``local Euler obstruction'' in cohomology.
\end{rem}

\subsection{Note on $R$-matrices}
\label{NoR}

A key feature of the $\Stab^\Dl_{\si\<,\>I}$ classes is that some endomorphisms
encoded by them satisfy Yang-Baxter equations, that is, they are $R$-matrices.
For more details, see eg. \cite{O, RTV2}. For completeness here we show the
prototype $R$-matrices for the convention used in this paper.

Let $\XX_n=\cup_{|\la|=n} \tfl$. For $I\in \Il$ let $v_I=v_{i_1}\otimes\ldots \otimes v_{i_n}\in (\C^N)^{\otimes n}$ for $i_j=k$ if $j\in I_k$. Stable envelope maps are defined
\be
\St_{\si}^\Delta: (\C^N)^{\otimes n}\otimes \C(\zz,h)=K_T\left( \XX_n^T \right)\otimes \C(\zz,h)
\to
K_T\left( \XX_n\right)\otimes \C(\zz,h)
\ee
by
$
v_I \mapsto \Stab^\Dl_{\si\<,\>I}.
$

Stable envelope maps define ``geometric'' $R$-matrices by
\be
\Rc_{\si',\si}^\Delta=(\St_{\si'}^\Delta)^{-1}\circ \St_{\si}^\Delta.
\ee

For example, for $n=2, N=2$, we have
\begin{align*}
\St_{\id}^\Delta(v_1 \ox v_2)=&(\gm/z_1)^{m_{1,2}}\left( 1-z_2/\gm\right)\cdot \sqrt{h}
\\
\St_{\id}^\Delta(v_2 \ox v_1)=&(\gm/z_2)^{m_{1,2}}\left( 1-hz_1/\gm\right)
\\
\St_s^\Delta(v_1 \ox v_2)=&(\gm/z_1)^{m_{1,2}}\left( 1-hz_2/\gm\right)
\\
\St_s^\Delta(v_2 \ox v_1)=&(\gm/z_2)^{m_{1,2}}\left( 1-z_1/\gm\right)\cdot \sqrt{h}.
\end{align*}
Therefore, $\Rc_{s,\id}^\Delta$ acting on $(\C^2)^{\otimes 2}$ with ordered basis $v_1\otimes v_1, v_1\otimes v_2, v_2\otimes v_1, v_2\otimes v_2$ is
\be
\label{RR2o}
\left(1\right)\oplus
\left( \begin{array}{cccc}
\dfrac{\sqrt{h}(1-z_2/z_1)}{1-hz_2/z_1}
& \dfrac{(1-h)(z_2/z_1)^{-m_{1,2}}}{1-hz_2/z_1}
\\[9pt]
\dfrac{(1-h)(z_2/z_1)^{m_{1,2}+1}}{1-hz_2/z_1} & \dfrac{\sqrt{h}(1-z_2/z_1)}{1-hz_2/z_1}
\end{array} \right)\oplus \left(1\right).
\vv.2>
\ee
The $R$-matrix $R_{s,\id}$ used in \cite{RTV2}
\begin{equation*}
\label{RR2}
\left(1\right)\oplus
\left( \begin{array}{cccc}
\dfrac{1-z_2/z_1}{1-hz_2/z_1}
& \dfrac{(1-h)}{1-hz_2/z_1}
\\[9pt]
\dfrac{(1-h)z_2/z_1}{1-hz_2/z_1} & \dfrac{h(1-z_2/z_1)}{1-hz_2/z_1}
\end{array} \right)\oplus \left(1\right)
\end{equation*}
is obtained from this one by putting $m_{1,2}=0$ (considering the anti-dominant alcove) and a
change in the normalization convention involving $h$.

Similar calculation shows that for general $N$, but $n=2$, the operator
$\Rc=\Rc_{s,id}^\Delta$ acting on $(\C^N)^{\otimes 2}$ has entries
\beq
\label{RN}
\Rc_{jj}^{jj}=1, \qquad \Rc^{jk}_{jk}=\sqrt{h}\frac{1-z_2/z_1}{1-hz_2/z_1} \qquad (1\leq j,k \leq N, j\ne k),
\eeq
\be
\Rc^{jk}_{kj}=\frac{(1-h)(z_2/z_1)^{m_{j,k}+1}}{1-hz_2/z_1},
\qquad
\Rc^{kj}_{jk}=\frac{(1-h)(z_2/z_1)^{-m_{j,k}}}{1-hz_2/z_1} \qquad{(1\leq j<k \leq N),}
\ee
and otherwise \,$0$\,, while the $R$-matrix $R=R_{s,\id}$ used in \cite{RTV2} has entries
$R_{jj}^{jj}=1$,
\beq
\label{RO}
R^{jk}_{jk}=\frac{1-z_2/z_1}{1-hz_2/z_1} \quad (j<k ),
\qquad
R^{jk}_{jk}=h\frac{1-z_2/z_1}{1-hz_2/z_1} \quad (k<j),
\eeq
\be
R^{jk}_{kj}=\frac{(1-h)(z_2/z_1)}{1-hz_2/z_1},
\qquad
R^{kj}_{jk}=\frac{1-h}{1-hz_2/z_1} \qquad{(1\leq j<k \leq N),}
\ee
and otherwise \,$0$\,.

\section{Line bundles and quadratic forms}
\label{sec:6}
\subsection{Line bundles on $\C^p$}

Let $E=\C/\La$ be an elliptic curve, where $\La=\Z+\tau\Z$, $\tau \in\C$, $\text{Im}\,\tau>0$,
so that $E^p=\C^p/\La^p$.
For each pair $(M,v)$ consisting of a symmetric integral $p\times p$ matrix $M$
and $v\in (\R/\Z)^p$ let $\LL(M,v)$ be the line bundle
$(\C^p\times \C)/\Lambda^p \to \C^p/\Lambda^p$ with action
\be
\la \cdot (x,u)=(x+\la, e_{\la}(x)(u)),
\qquad
\la\in \Lambda^p,\, x\in \C^p, \, u\in \C,
\ee
and cocycle
\be
e_{n+m\tau}(x)=(-1)^{n^tMn}(-e^{i\pi\tau})^{m^tMm} e^{2\pi i m^t(Mx+v)},
\qquad
n,m\in \Z^p.
\ee

To an integral symmetric $p \times p$ matrix $M$ and a vector $v\in \C^p$ we associate
the integral quadratic form $M(x) = x^t M x$ and the linear form $v(x) = x^tv$
on the universal cover $\C^p$ of $E^p$, and we call them the quadratic form and the linear
form of the line bundle $\LL(M, v)$. The linear form is defined up to addition of an
integral linear form.

\begin{prop}
[{\cite[Proposition~5.1]{FRV}, see \cite[Section~I.2]{Mu}}]

\label{p-li} \
\begin{itemize}
\item[(i)] $\LL(M,v)$ is isomorphic to $\LL(M',v')$ if
and only if $M=M'$ and $v\equiv v'\mod\Lambda^p$.
\item[(ii)] For generic $E$, every holomorphic line bundle on $E^p$
is isomorphic to $\LL(M,v)$ for some $(M,v)$.
\item[(iii)]
$\LL(M_1,v_1)\otimes \LL(M_2,v_2)\cong \mathcal
L(M_1+M_2,v_1+v_2)$.
\item[(iv)] Let $\si\in S_p$ act by permutations on $E^p$ and
$\C^p$. Denote also by $\si$ the corresponding
$p\times p$ permutation matrix. Then
\be
\si^*\LL(M,v)=\LL(\si^t M\si,\si^t v).
\ee
\end{itemize}
\end{prop}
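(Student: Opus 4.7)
The plan is to verify (iii) and (iv) by direct cocycle computation, derive (i) by analyzing when a cocycle is a coboundary, and reduce (ii) to the Appell--Humbert classification.

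Parts (iii) and (iv) are formal. For (iii), the cocycle of a tensor product is the product of cocycles, and each factor in the explicit formula for $e_\la(x)$ is multiplicative in $(M,v)$, so the product is the cocycle of $\LL(M_1+M_2,v_1+v_2)$. For (iv), viewing $\si$ as the corresponding permutation matrix acting on $\C^p$, the pullback cocycle is $\la\mapsto e_{\si\la}(\si x)$; substituting $\si\la=\si n+(\si m)\tau$ and using the identities $(\si n)^tM(\si n)=n^t(\si^tM\si)n$, $(\si m)^tM(\si x)=m^t(\si^tM\si)x$, and $(\si m)^tv=m^t(\si^tv)$ immediately produces the cocycle of $\LL(\si^tM\si,\si^tv)$.

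For (i), the substantive direction is $(\Rightarrow)$. By (iii), $\LL(M,v)\cong\LL(M',v')$ implies $\LL(M-M',v-v')$ is trivial, so its cocycle $e_\la(x)$ equals $f(x+\la)/f(x)$ for some nowhere-vanishing entire $f\colon\C^p\to\C^\times$. Writing $f=\exp(2\pi i g)$ with $g$ entire, the functional equation $g(x+\la)-g(x)$ must, for each fixed $\la$, be an affine function of $x$ dictated by the explicit cocycle, which forces $g$ to be a polynomial of degree at most two. Matching the quadratic coefficients shows that a nonzero integer symmetric $M-M'$ cannot be absorbed consistently with the lattice $\La^p$, so $M=M'$; the remaining linear/constant parts then absorb the surviving exponential $\exp(2\pi i m^t(v-v'))$ exactly when $v-v'\in\La^p$. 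The converse is obtained by exhibiting the trivializing $f$ explicitly as $f(x)=\exp(2\pi i \alpha^t x)$ for $v-v'=\beta+\tau\alpha$ with $\alpha,\beta\in\Z^p$.

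The hardest step is (ii), which is the Appell--Humbert theorem applied to $E^p$. Every holomorphic line bundle on the complex torus $E^p$ admits a cocycle in Appell--Humbert normal form, parametrized by a Hermitian form $H$ on $\C^p$ whose imaginary part is integer-valued on $\La^p$, together with a compatible semicharacter. Under the genericity hypothesis $\End(E)=\Z$, the admissible Hermitian forms are exactly those represented by symmetric integral matrices $M$, and the semicharacter data modulo coboundaries is exactly $v\in(\R/\Z)^p$. The remaining task is to match the Appell--Humbert normalization to the specific cocycle $e_{n+m\tau}(x)$ defining $\LL(M,v)$; this is a direct rewriting that I would take from \cite[Section~I.2]{Mu}. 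The main subtlety is to check that the genericity hypothesis enters only in restricting the admissible Hermitian forms, ruling out extra bundles coming from complex multiplication on individual $E$ factors, and is not needed for parts (i), (iii), or (iv).
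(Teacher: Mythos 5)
The paper offers no proof of this proposition at all: it is imported verbatim from \cite[Proposition~5.1]{FRV} with a pointer to \cite[Section~I.2]{Mu}, so there is no internal argument to compare with, and your outline is essentially the standard proof behind those references. Your treatment of (iii) and (iv) by multiplying cocycles and substituting $\si\la$, $\si x$ is correct, and your reduction of (i) to deciding when the cocycle of $\LL(M-M',v-v')$ is a coboundary $f(x+\la)/f(x)$ with $f=e^{2\pi i g}$, $g$ forced to be a polynomial of degree at most two, is the right argument: the $\Z^p$-directions kill the quadratic part of $g$, the $\tau\Z^p$-directions then force $M=M'$, and the surviving character $e^{2\pi i m^t(v-v')}$ is a coboundary of $e^{2\pi i\al^t x}$ exactly when $v-v'\in\La^p$. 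You are also right that genericity of $E$ enters only in (ii).

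The one genuine gap is in (ii), precisely at the step you call the hardest. The claim that ``the semicharacter data modulo coboundaries is exactly $v\in(\R/\Z)^p$'' cannot be correct: for a fixed first Chern class the isomorphism classes form a torsor under $\Pic^0(E^p)\cong E^p$, of real dimension $2p$, whereas characters of the form $n+m\tau\mapsto e^{2\pi i m^t v}$ with real $v$ give only a $p$-dimensional family; already for $p=1$ the bundles $\LL((d),v)$, $v\in\R/\Z$, sweep out only a circle inside the $E$-worth of degree-$d$ bundles. The statement is true only if $v$ ranges over $\C^p$ (as in \cite{FRV}; note that the condition ``$v\equiv v'\bmod\La^p$'' in (i) only makes sense for complex $v$, and the paper's later use of linear forms $z\sum_i r_ix_i$ with $z\in\C$ confirms this reading --- the ``$v\in(\R/\Z)^p$'' in the definition is a slip). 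Concretely, after invoking Appell--Humbert you must multiply the unitary semicharacter by a coboundary $e^{2\pi i c^t x}$ with $c\in\C^p$ chosen so that the resulting cocycle becomes trivial on $\Z^p$; this is what brings it to the paper's normal form, and it makes the resulting $v$ complex, not real. With that correction your reduction works; your identification of the admissible Hermitian forms (equivalently $NS(E^p)$) with symmetric integer matrices for non-CM $E$ is the right use of the genericity hypothesis.
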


\begin{rem}
\label{rem5.4} Sections of $\LL(M,v)$ are the same as functions
$f$ on $\C^p$ such that $f(x+\la)=e_\la(x)^{-1}f(x)$
for all $\la\in\Lambda^p$, $x\in\C^p$. Explicitly, a
function on $\C^p$ defines a section of $\LL(M,v)$ if
and only if
\begin{align*} f(x_1,\dots,x_j+1,\dots,x_p) &= (-1)^{M_{jj}}f(x), \\
f(x_1,\dots,x_j+\tau,\dots,x_p) &= (-1)^{M_{jj}}e^{-2\pi i(\sum_k
M_{jk}x_k+v_j)-\pi i \tau
M_{jj}} f(x),
\end{align*} for all $x\in\C^p$, $j=1,\dots,p$.
\end{rem}

\begin{rem} [\cite{FRV}] Let $\theta(x)$ be the theta
function in one variable defined by \Ref{TH}. Then, for any $r\in\mathbb Z^p$ and
$z\in\C$,
\be
\theta(r^tx+z)=\theta(r_1x_1+\cdots+r_px_p+z)
\ee
is a holomorphic section of $\LL(M,v)$ with quadratic form
$ M(x)=\left(\sum_{i=1}^pr_ix_i\right)^2$,
and linear form
$ v(x)=z\sum_{i=1}^p r_ix_i$.
Since an
integral quadratic form is an integral linear combination of squares
of integral linear forms, a line bundle $\LL(M,0)$ has a meromorphic
section which is a ratio of products of theta functions
$\theta(r^tx)$ with $r\in\mathbb Z^p$.
\end{rem}

\subsection{Theta functions as sections of line bundles}

Introduce new variables
$\bs v=(v^{(1)}$, \dots, $v^{(N)})$,
$v^{(k)}=(v_{k,1},\dots,v_{k,\la_k})$, $y,$ \,$ \nub=(\nu_1,\dots,\nu_{N-1})$.
Make the substitution
\bean
\label{sub1}
&&
\phantom{aaa}
q,\ h,\ \mu_1/\mu_2,\dots,\mu_{N-1}/\mu_N
\quad \mapsto \quad e^{2\pi i \tau}, \ e^{2\pi i y}, \ e^{2\pi i\nu_1},\ldots,
e^{2\pi i\nu_{N-1}},
\\
\notag
&&
t^{(k)}_1,\ldots,t^{(k)}_{\la^{(k)}}
\
\mapsto
\
e^{2\pi i v_{1,1}},\ldots,e^{2\pi iv_{1,\la_{1}}},\ \
e^{2\pi iv_{2,1}},\ldots,e^{2\pi iv_{2,\la_{2}}},\ \
\ldots,\ \
e^{2\pi iv_{k,1}},\ldots,e^{2\pi i v_{k,\la_{k}}}
\eean
for $k=1,\dots,N$ in the functions $G_\bla(\ttt,h,\mub)$, $E^{\>\on{ell}}_\bla(\ttt,h)$. Denote the resulting functions by
$\hat G_\bla(\bs v,y,\nub)$, $\Eh^{\>\on{ell}}_\bla(\bs v, y)$.

\begin{lem}
\label{lem:sl}
The function $\hat G_\bla(\bs v,y,\nub)/\Eh^{\>\on{ell}}_\bla(\bs v, y)$
defines a meromorphic section of the line bundle $\mc L (M_\bla,0)$ on
$E^{\la_1}\times \ldots \times E^{\la_N}\times E\times E^{N-1}$, where
\bean
\label{mL}
\phantom{aaa}
M_{\bla}(\bs v, y, \nub)
&=&
\sum_{1\leq j<k\leq N} \sum_{a=1}^{\la_j}\sum_{b=1}^{\la_k}(v_{j,a}-v_{k,b})^2
- y^2 \sum_{k=1}^{N-1} (\la_1+\dots+\la_k)^2
\\
\notag
&+&
\sum_{k=1}^{N-1}(\nu_k-\la_ky) \sum_{j=1}^k \sum_{a=1}^{\la_j} v_{j,a} .
\eean

\end{lem}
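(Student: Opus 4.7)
The plan is to apply Remark \ref{rem5.4}, which characterizes sections of $\LL(M,0)$ by explicit quasi-periodicity in each coordinate. Equivalently, I exploit the fact that for any integer vector $r$, the theta function $\theta(r^t x)$ is (up to a universal factor) a section of $\LL(rr^t, *)$, so by Proposition \ref{p-li}(iii) the line bundle associated with a product and quotient of such theta functions has matrix
\begin{equation*}
M \;=\; \sum_{\ell\text{ in numerator}}\ell\ell^t \;-\;\sum_{\ell\text{ in denominator}}\ell\ell^t,
\end{equation*}
where each $\ell$ is the integer coefficient vector of the argument of a theta factor. This reduces the problem to verifying the polynomial identity
\begin{equation*}
\sum_{\ell\text{ num.}}\ell(\bs v,y,\nub)^2 \;-\;\sum_{\ell\text{ denom.}}\ell(\bs v,y,\nub)^2 \;=\; M_\bla(\bs v,y,\nub).
\end{equation*}

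After performing substitution (\ref{sub1}), the theta factors are explicit: in the numerator of $\hat G_\bla$ one finds $\theta(v^{(k+1)}_c - v^{(k)}_a)$ for all admissible $k,a,c$ together with $\theta(V^{(k)}+\nu_k-\la_k y)$ for $k=1,\dots,N-1$, where $V^{(k)}=\sum_{j\le k}\sum_{a}v_{j,a}$; in the denominator of $\hat G_\bla$, $\theta(V^{(k)})\theta(\nu_k-\la_k y)$; and all of $\prod_{k,a,b}\theta(y+v^{(k)}_b-v^{(k)}_a)$ sits in the denominator via $\Eh^{\on{ell}}_\bla$. The $\nu$-block collapses nicely using $(V+u)^2-V^2-u^2=2Vu$ to contribute $2V^{(k)}(\nu_k-\la_k y)$. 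For the two remaining sums I would expand
\begin{equation*}
\sum_{a,b=1}^{\la^{(k)}}\!(y+v^{(k)}_b-v^{(k)}_a)^2 = (\la^{(k)})^2 y^2 + 2\la^{(k)}\!\!\sum_a\!(v^{(k)}_a)^2 - 2(V^{(k)})^2,
\end{equation*}
(the $y$-linear cross terms cancel by antisymmetry in $a,b$), and
\begin{equation*}
\sum_{a=1}^{\la^{(k)}}\sum_{c=1}^{\la^{(k+1)}}\!(v^{(k+1)}_c - v^{(k)}_a)^2 = \la^{(k)}\!\!\sum_c(v^{(k+1)}_c)^2 + \la^{(k+1)}\!\sum_a(v^{(k)}_a)^2 - 2V^{(k)}V^{(k+1)}.
\end{equation*}

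I would then collect coefficients of each monomial type in the resulting polynomial and match against (\ref{mL}). The coefficient of $v_{j,a}^2$ is obtained by summing over all $k\ge j$ (since $v_{j,a}$ enters $v^{(k)}$ precisely when $k\ge j$): the contribution per $k<N$ is $\la^{(k+1)}-2\la^{(k)}+\la^{(k-1)}=\la_{k+1}-\la_k$ (setting $\la^{(0)}=0$), plus $\la^{(N-1)}$ from the boundary $k=N$ contribution of the first product; telescoping yields $n-\la_j$. The cross $v_{j,a}v_{k,b}$ coefficient (for $j<k$) reduces, after the analogous count on shared indices, to $-2$. The $y^2$ coefficient comes entirely from $\Eh^{\on{ell}}_\bla$ as $-\sum_{k=1}^{N-1}(\la^{(k)})^2$, and the cross terms $\nu_k v_{j,a}$ and $y v_{j,a}$ (for $j\le k$) arise from $2V^{(k)}(\nu_k-\la_k y)$, producing exactly the last summand in (\ref{mL}).

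The principal obstacle is the telescoping bookkeeping: tracking, for each fixed $v_{j,a}$, the precise multiplicity with which it appears in each $v^{(k)}$ list and each sum over $k$, and verifying that the alternating contributions from the numerator of $\hat G_\bla$ and the denominator $\Eh^{\on{ell}}_\bla$ collapse to the stated coefficients in (\ref{mL}). Once this combinatorial identity is secured, the lemma follows.
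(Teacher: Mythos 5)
Your route is the same as the paper's (whose proof is just ``by straightforward calculation''): reduce to the additivity of quadratic forms under products and quotients of theta factors via Remark~\ref{rem5.4}, Proposition~\ref{p-li}(iii) and the remark on $\theta(r^tx+z)$, and then verify a polynomial identity; your identification of the theta factors after substitution \Ref{sub1}, the two expansion formulas, and the telescoping count giving coefficient $n-\la_j$ for $v_{j,a}^2$ and $-2$ for the cross-block terms are all correct, as is the $y^2$ coefficient $-\sum_k(\la^{(k)})^2$. The one point you should not gloss over is the last step: the $\nu$- and $y$-linear-in-$v$ cross terms you compute are $2\sum_{k=1}^{N-1}(\nu_k-\la_k y)\sum_{j\le k}\sum_a v_{j,a}$, which is \emph{twice} the last summand of \Ref{mL} as printed, so the asserted ``exact'' match with \Ref{mL} does not hold literally. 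The factor $2$ is in fact forced: without it the symmetric matrix $M_\bla$ would have half-integer entries pairing $v_{j,a}$ with $\nu_k$ (and with $y$ when $\la_k$ is odd), contradicting the integrality required for $\LL(M_\bla,0)$, and the companion formula \Ref{mI} is written with the analogous explicit factors of $2$. So your calculation is the correct one and the discrepancy points to a misprint in \Ref{mL}; you should state this explicitly rather than claim agreement with the printed formula.
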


\begin{proof} By straightforward calculation.
\end{proof}

Make the substitution
\bean
\label{sub2}
&&
q,\ h \quad \mapsto \quad e^{2\pi i \tau}, \ e^{2\pi i y},
\\
\notag
&&
z_1,\dots,z_n, \ \mu_1/\mu_2,\dots,\mu_{N-1}/\mu_N
\quad
\mapsto
\quad
e^{2\pi i x_1},\ldots,e^{2\pi ix_n},\
e^{2\pi i\nu_1},\ldots,
e^{2\pi i\nu_{N-1}},
\eean
in the function $ G_I(\zz,h,\mub)$. The resulting function denote by $\hat G_I(\xx,y,\nub)$.

\begin{lem}
\label{lem:slI}
For $I\in\Il$ the function $\hat G_I(\xx,y,\nub)$
defines a meromorphic section of the line bundle $\mc L (M_I,0)$ on
$E^n\times E\times E^{N-1}$, where
\bean
\label{mI}
\phantom{aaa}
M_I(\xx, y, \nub)
&=&
-2\sum_{k=1}^{N-1}(\nu_k+\dots+\nu_{N-1})\sum_{a\in I_k}x_a
\\
\notag
&&
-\,y^2\,\sum_{k=1}^{N-1}\sum_{a\in I_k}
(p_{I\<\<,\:k}(a)-a+1+\la_1+\dots+\la_{k-1})^2
\\
\notag
&&
+\,2y\,\sum_{k=1}^{N-1}\sum_{a\in I_k} x_a (p_{I\<\<,\:k}(a)-a+1+n-\la_N).
\eean
\end{lem}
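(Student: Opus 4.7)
The plan is to verify directly the quasi-periodicity conditions listed in the remark following Proposition~\ref{p-li} for $\hat G_I(\xx, y, \nub)$ with $M = M_I$ and $v = 0$. The only tools needed are the basic identities $\theta(u+1) = -\theta(u)$ and $\theta(u+\tau) = -e^{-\pi i \tau - 2\pi i u}\theta(u)$ from \Ref{u+tau}, equivalently $\thi(qx)/\thi(x) = -1/(q^{1/2}x)$ from \Ref{one}. Under the substitution \Ref{sub2}, the three families of lattice shifts $x_a \mapsto x_a + \tau$, $y \mapsto y + \tau$, $\nu_k \mapsto \nu_k + \tau$ correspond to $z_a \mapsto q z_a$, $h \mapsto qh$, $\mu_k/\mu_{k+1} \mapsto q\mu_k/\mu_{k+1}$, so the lemma reduces to computing how $G_I$ transforms under each such $q$-multiplication and matching the result against the factor prescribed by the row of $M_I$ corresponding to the shifted variable.

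The first step is the $x_a$-shift with $a \in I_k$, $k \le N-1$. Only four groups of factors in formula \Ref{Si} contain $z_a$: the numerator $\thi(\prod_{b \in I_k} z_b)$, the paired denominator $\thi(h^{-\la_k-\dots-\la_{N-1}}(\mu_k/\mu_N)\prod_{b \in I_k} z_b)$, the numerator $\thi(z_a)$, and the denominator $\thi(z_a h^{a-1-p_{I,k}(a)-\la_1-\dots-\la_{k-1}})$. Applying the $q$-scaling rule to each, the signs and powers of $q^{1/2}$ cancel pairwise, and what survives is the factor
\[
e^{2\pi i(\nu_k + \dots + \nu_{N-1})}\, e^{2\pi i(a - 1 - p_{I,k}(a) - n + \la_N)\,y}.
\]
Reading the $a$-th row of $M_I$ off \Ref{mI} gives $(M_I)_{aa} = 0$, $(M_I)_{a, \nu_j} = -1$ for $j \ge k$ and $0$ otherwise, and $(M_I)_{a, y} = p_{I,k}(a) - a + 1 + n - \la_N$. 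Substitution into the rule from the remark after Proposition~\ref{p-li} reproduces exactly the displayed factor. The cases $a \in I_N$ are trivial: neither $G_I$ nor $M_I$ depends on $x_a$ then.

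The $y$- and $\nu_k$-shifts are handled analogously. The $\nu_k$-shift affects only the factors of the first group in \Ref{Si}, and the corresponding row of $M_I$ matches at once. The $y$-shift involves every factor of $G_I$ and is the main bookkeeping obstacle: one must collect the $y$-contributions from every paired numerator/denominator and match them with $(M_I)_{yy}$, $(M_I)_{y, x_a}$, and $(M_I)_{y, \nu_j}$. The required rearrangement uses the identity $a - 1 - p_{I,k}(a) - \la_1 - \dots - \la_{k-1} = -(p_{I,k}(a) - a + 1 + \la_1 + \dots + \la_{k-1})$, which links the integer exponent appearing inside $\thi$ in \Ref{Si} to the integer appearing in the $y^2$-coefficient of \Ref{mI}. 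The integer shifts $x_a \mapsto x_a + 1$, and analogously for $y$ and $\nu_k$, produce only signs matching $(-1)^{M_{jj}}$. The vanishing of the linear form $v = 0$ is automatic because every $\theta$-argument in $\hat G_I$ is of the form $\theta(\ell(\xx, y, \nub))$ for an integer linear combination $\ell$ with zero constant term, so no residual exponential shift arises.
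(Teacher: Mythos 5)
Your proposal is correct and is exactly the ``straightforward calculation'' the paper refers to: a direct verification of the quasi-periodicity conditions in the Remark after Proposition~\ref{p-li}, using \Ref{one}--\Ref{u+tau}, for the lattice shifts of $x_a$, $y$, $\nu_k$. Your worked-out $x_a$-shift matches the row of $M_I$ (including the vanishing diagonal entry), and the $y$- and $\nu_k$-shifts indeed close in the same way once one remembers the diagonal term $M_{yy}\,y$ in the prescribed cocycle, so there is nothing methodologically different from the paper.
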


\begin{proof} By straightforward calculation.
\end{proof}

\section{Equivariant elliptic cohomology of $\tfl$}
\label{sec7}

\subsection{Tautological bundles and Chern classes}
\label{Cc}

We follow the exposition of equivariant elliptic cohomology in \cite[Section 4]{FRV}, which is based on \cite{Groj,GKV}.

Let $E=\C/(\Z+\tau\Z)$, $\text{Im}\,\tau>0$, be an elliptic curve and $G$ a compact group.
The $G$-equivariant elliptic cohomology $E_G(-)$ is a covariant functor from finite $G$-CW
complexes to superschemes satisfying a set of axioms. For example,
\be
E_{U(1)}(\pt)=E,
\qquad
E_{U(n)}(\pt)=E^{(n)}=E^n/S_n .
\ee
For a $G$-space $M$ we have the structure map
$p_G:E_G(M)\to E_G(\pt)$. The $A=U(1)^n$-equivariant elliptic cohomology of $\Fla$ is the fiber product obtained from the Cartesian square
\be
\xymatrix{
E_A(\Fla) \ar[r]^{\chi\ \hskip 1.5 true cm}\ar[d]_{p_A} &
E^{(\la_1)} \times E^{(\la_2)} \times \ldots \times E^{(\la_N)} \ar[d]^{p_\la} \\
E^n\ar[r]_{p_n} & E^{(n)}
}
\ee
where the left vertical arrow is the structure map to $E_A(\mathrm{pt})$;
$p_n$, $p_\la$ are canonical projections, and the top horizontal map $\chi$ is the
characteristic map coming from the $N$ equivariant bundles $F_{k}/F_{k-1}$ over $\Fla$, see \cite{GKV},
and we have
\beq
\label{dsch}
E_A(\Fla)\,=\,
E^n \times_{E^{(n)}} (E^{(\la_1)} \times E^{(\la_2)} \times \ldots \times E^{(\la_N)})
\eeq

\subsection{Equivariant localization -- moment map description}
The inclusion of the fixed point $x_I$ to $\Fla$ induces a map $\iota_I:E_A(\pt)\to E_A(\Fla)$ and $E_A(\Fla)$ is the union of $\iota_IE_A(\pt)\cong E^n$ where $I\in \Il$. The intersections of $\iota_IE_A(\pt)\cong E^n$'s are described as follows. Assume $a\in I_k$, $b\in I_l$ for an $I\in \Il$, and assume $k\ne l$. Let $s_{a,b}(I)$ be obtained by switching the numbers $a$ and $b$, i.e. $a\in s_{a,b}(I)_l$ and $b\in s_{a,b}(I)_k$. The ``diagonal'' $\Dl_{I,s_{a,b}(I)}=\{\xx\in E^n : x_a=x_b\}$ is naturally included in both
$\iota_IE_A(\pt)=E^n$ and $\iota_{s_{a,b}(I)}E_A(\pt)=E^n$. The counterpart of the standard GKM theory for equivariant elliptic cohomology is
\be
E_A(\Fla)
=
\sqcup_{I\in \Il} \iota_IE_A(\pt) / \sim
\ee
where $\sim$ is induced by gluing $\iota_IE_A(\pt)=E^n$ and $\iota_{s_{a,b}(I)}E_A(\pt)=E^n$ along the diagonal $\Dl_{I,s_{a,b}(I)}=\{\xx\in E^n : x_a=x_b\}$, for any $I,a,b$, cf.~\cite{GKV}, \cite{GKM}, \cite[Example 4.4]{Ga}.
The isomorphism between the two descriptions of $E_A(\Fla)$ is induced by the map
\be
\sqcup_{I\in \Il} \iota_I(E_A(\pt)) \to
E^n \times_{E^{(n)}} (E^{(\la_1)} \times E^{(\la_2)} \times \ldots \times E^{(\la_N)})
\ee
whose restrictions to the copy $E^n$ labeled by $I$ is $\xx \mapsto (\xx, \xx_{I_1},\dots,\xx_{I_N})$.

\subsection{Cotangent bundle and the dynamical parameters}
Denote $T=A\times U(1)=U(1)^n\times U(1)$, and the extra $U(1)$ acts on
the fibers of $\tfl$ by multiplication. Since $\tfl$ is equivariantly homotopy
equivalent to $\Fla$ we have
\be
E_T(\tfl)=E_A(\Fla)\times E,
\ee
a scheme over $E_T(\mathrm{pt})=E^n\times E$.

As in \cite{AO} and \cite{FRV},
we consider an extended version of elliptic cohomology to accommodate for dynamical parameters
in quantum group theory
(or K\"ahler parameters in the terminology of \cite{AO}), namely
\be
\hE_T(\tfl):= E_T(\tfl)\times (\text{Pic}(\tfl)\otimes_\Z E) \simeq
E_T(\tfl)\times E^{N-1},
\ee
a scheme over $\hE(\pt)=E^{n+1+N-1}$, where the factors of $E^{N-1}$ correspond to the line bundles $\det(F_{p+1}/F_{p})$,
$p=1,\ldots,N-1$ over $\tfl$.

We will denote also by $\iota_I$ the map $\hE_T(\mathrm{pt})\to \hE_T(\tfl)$ induced by the inclusion of the fixed point $x_I$ into $\tfl$.
Then $\hE_T(\tfl)$ consists of the components $\iota_I\hE_T(\mathrm{pt})$ where $I$ runs over the elements of the set $\Il$.
By Section \ref {Cc}, we have a description of $\hE_T(\tfl)$ as a fiber product
\beq
\label{ghp}
\hE_T(\tfl)\,=\,\left(
E^n \times_{E^{(n)}} (E^{(\la_1)} \times E^{(\la_2)} \times \ldots \times E^{(\la_N)})\right)
\times E\times E^{N-1}.
\eeq
In particular we have the characteristic embedding
\beq
\label{he}
c\ :\ \hE_T(\tfl) \to
E^n \times E^{(\la_1)} \times E^{(\la_2)} \times \ldots \times E^{(\la_N)}\times E\times E^{N-1}
\eeq
of the extended $T$-equivariant cohomology scheme into a nonsingular projective variety.

\subsection{Admissible line bundles, and their sections}

We will consider sections of certain ``admissible'' line bundles over $\hE_T(\tfl)$.
These line bundles are, up to a twist by a fixed line bundle, those coming from the base scheme
$\Eh_{T}(\mathrm{pt})$. Let $p_T$ be the structure map
\beq
\label{st}
p_T\colon \Eh_T(\tfl)\to \Eh_T(\mathrm{pt})
\eeq
and
$\hat\chi = \chi \times \id\times\id$ the characteristic map
\beq
\hat\chi
\ :\ \hE_T(\tfl) \to
E^{(\la_1)} \times E^{(\la_2)} \times \ldots \times E^{(\la_N)}\times E\times E^{N-1}.
\eeq

Let $\bs v=(v^{(1)}, \dots, v^{(N)})$, $v^{(k)}=(v_{k,1},\dots,v_{k,\la_k})$ be coordinates on the universal cover
of $E^{\la_1}\times \ldots \times E^{\la_N}$ and let $y$, $\nub=(\nu_1,\dots,\nu_{N-1})$ be coordinates on
the universal cover of $E\times E^{N-1}$.

Let $M_\bla$ be the quadratic form defined in \Ref{mL}.
Clearly $M_{\bla}$ is symmetric under the action of the product $S_\bla=S_{\la_1}\times \dots\times S_{\la_N}$
of symmetric groups
permuting the second index of
$v_{l,a}$'s while the first index is fixed. Thus the line bundle $\LL(M_{\bla},0)$
on $E^{\la_1}\times \ldots \times E^{\la_N}\times E\times E^{N-1}$
can be considered as a line bundle on
$E^{(\la_1)}\times \ldots \times E^{(\la_N)}\times E\times E^{N-1}$.
\begin{defn} The {\em twisting line bundle} on $\hE_T(\tfl)$ is
$\mathcal T_\bla=\hat\chi^*\LL(M_\bla,0)$.
\end{defn}
\begin{defn} An {\em admissible line bundle} on
$\Eh_T(\tfl)$ is a line bundle of the form
\be
p_T^*\LL\otimes \mathcal T_{\bla},
\ee
for some line bundle $\LL$ on $\Eh_T(\mathrm{pt})$.
\end{defn}

We say that a meromorphic section on a complex
manifold {\em restricts to a meromorphic section} on a submanifold if
it is defined at its generic point, i.e., if the divisor of poles does
not contain a component of the submanifold.

We will consider meromorphic sections of line bundles on elliptic
cohomology schemes.
Recall that $\Eh_T(\tfl)$ has
components $Y_I=\iota_I\Eh_T(\mathrm{pt})$, corresponding to the
inclusion of the fixed points $x_I$, $I \in \Il$ into $\tfl$.

\begin{defn}\label{def-mero}
Let $\LL$ be a line bundle on $\Eh_T(\tfl)$. A {\em
meromorphic section} of $\LL$ is a collection of
meromorphic sections $s_I$ of $\LL|_{Y_I}$, labeled by
$I\in \Il$ and restricting to meromorphic sections
on all intersections $Y_{I_1}\cap\cdots\cap Y_{I_s}$ and such that
\be
s_I|_{Y_I\cap Y_J}=s_J|_{Y_I\cap Y_J},
\ee
for all $I,J$. A {\em holomorphic section} is a meromorphic section
whose restriction to each $Y_I$ is holomorphic.
\end{defn}

\subsection{$T$-equivariant elliptic cohomology classes}

\label{sec:ellHclasses}

\begin{defn}
Let $\LL$ be a line bundle on $\Eh_T(\on{pt})$.
A {\em
$T$-equivariant elliptic cohomology class} on $\tfl$ of degree
$\LL$ is a holomorphic section of the admissible line bundle
$p_T^*\LL\otimes \mathcal T_{\bla}$ on $\Eh_T(\tfl)$.
We denote by $ H_T^{\mathrm{ell}}(\tfl)_{\LL}$ the
complex vector
space of $T$-equivariant elliptic cohomology classes of degree
$\LL$ on $\tfl$.
\end{defn}

\subsection{Weight functions project to equivariant elliptic cohomology classes}
\label{sec:wecc}

In this section we show that the substitutions of weight functions $\Wt^{\>\on{ell}}_{\si\<,\>I}$
define $T$-equivariant elliptic cohomology classes on $\tfl$.

For $I\in\Il$ make the substitution
\bean
\label{subs}
&&
\phantom{aaa} q,\ h, \ \mu_1/\mu_2,\dots,\mu_{N-1}/\mu_N \
\mapsto \ e^{2\pi i\tau}, \ e^{2\pi i y},\ e^{2\pi i\nu_1},\dots, e^{2\pi i\nu_{N-1}},
\\
\notag
&&
t^{(k)}_1,\ldots,t^{(k)}_{\la^{(k)}}
\
\mapsto
\
e^{2\pi i v_{1,1}},\ldots,e^{2\pi iv_{1,\la_{1}}},\ \
e^{2\pi iv_{2,1}},\ldots,e^{2\pi iv_{2,\la_{2}}},\ \
\ldots,\ \
e^{2\pi iv_{k,1}},\ldots,e^{2\pi i v_{k,\la_{k}}},
\eean
for $k=1,\dots,N$ in the function $\Wt^{\>\on{ell}}_{\si\<,\>I}$. The resulting function
is denoted by $\WT^{\>\on{ell}}_{\si\<,\>I}(\bs v, y,\bs \nu)$.
The function $\WT^{\>\on{ell}}_{\si\<,\>I}$
can be viewed as a section of a certain line bundle on
$E^n\times E^{(\la_1)}\times\dots\times E^{(\la_N)} \times E\times E^{N-1}$.
Its pull-back $c^*\WT^{\>\on{ell}}_{\si\<,\>I}$ by the embedding
\be
c\ :\ \hE_T(\tfl) \to
E^n \times E^{(\la_1)} \times E^{(\la_2)} \times \ldots \times E^{(\la_N)}\times E\times E^{N-1}
\ee
is a section of the pull-back bundle and its
restriction to $Y_J=\iota_J\Eh_T(\mathrm{pt})$ is the evaluation of $c^*\WT^{\>\on{ell}}_{\si\<,\>I}$
at $v^{(k)}=\bs x_{J_k}$, $k=1,\dots, N$.

Recall the function $\psi_I(h,\mub)$ defined in \Ref{psI}. Make the substitution \Ref{subs} in
$\psi_I(h,\mub)$. Denote the resulting function
by $\psih_I(y, \nub)$.
The zeros of $\psih_I(y, \nub)$ define a divisor $D_I$ on $\hE_T(\mathrm{pt})$ consisting of hypertori
with equation of the form $\nu_a+\nu_{a+1}+\dots+\nu_b -jy$, $i\in\Z$.

Let $M_I(\xx,y,\nub)$ be the quadratic form defined in \Ref{mI}
and $\mc L_I=\mc L(M_I,0)$ the corresponding line bundle on $\hE_T(\mathrm{pt})$.

\begin{prop}
\label{prop-8}
The restriction $c^*\WT^{\>\on{ell}}_{\si\<,\>I}$
of $\WT^{\>\on{ell}}_{\si\<,\>I}$
to $\Eh_T(\tfl)$ is a meromorphic section of the admissible line
bundle $p_T^*\LL_{\si\<,\>I} \otimes \mathcal T_\bla$ for some line bundle $\LL_{\si\<,\>I}$
on $\hE(\mathrm{pt})$. Moreover, if $\si=\mathrm{id}$,
the restriction $c^*\WT^{\>\on{ell}}_{\mathrm{id},I}=c^*\WT^{\>\on{ell}}_I$
is a meromorphic section of the admissible line bundle
$p_T^*\LL_I\otimes \mathcal T_\bla$, where $\mc L_I$ is defined above,
and the restriction is a holomorphic section of
the line bundle $p_T^*\LL_I(D_I)\otimes \mathcal T_\bla$, where the notation
$\LL(D)$ means as usual the invertible sheaf of meromorphic sections of
a sheaf $\mc L$ whose poles are bounded by the divisor $D$.

\end{prop}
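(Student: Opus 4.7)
The plan is to decompose $\Wt^{\>\on{ell}}_{\si\<,\>I}$ into three factors whose line bundle degrees are individually tractable and to combine them using Proposition~\ref{p-li}(iii). Write $\zz_\si=(z_{\si(1)},\dots,z_{\si(n)})$ and rearrange using \Ref{trpr} as
\be
\Wt^{\>\on{ell}}_{\si\<,\>I}(\ttt,\zz,h,\mub)\,=\,F_1\cdot F_2\cdot F_3\,,
\ee
where
\be
F_1=\frac{W^{\>\on{ell}}_{\si\<,\>I}(\ttt,\zz,h,\mub)}{G_{\bla,\,\si^{-1}(I)}(\ttt,\zz_\si,h,\mub)}\,,\quad F_2=\frac{G_\bla(\ttt,\zz_\si,h,\mub)}{E^{\>\on{ell}}_\bla(\ttt,h)}\,,\quad F_3=G_{\si^{-1}(I)}(\zz_\si,h,\mub)\,.
\ee
By Lemma~\ref{lem:tr} (applied with the variables $z_i$ permuted by $\si$) together with the $S_\bla$-symmetry of $G_{\bla,\si^{-1}(I)}$ in the $t^{(k)}$ variables, $F_1$ descends under \Ref{subs} to a meromorphic function on $E^{(\la_1)}\times\dots\times E^{(\la_N)}\times E^n\times E\times E^{N-1}$, and hence pulls back through $c$ to a meromorphic function on $\hE_T(\tfl)$ (contributing the trivial line bundle). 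The factor $F_2$, after \Ref{sub1} and $S_\bla$-descent, is by Lemma~\ref{lem:sl} a meromorphic section of $\LL(M_\bla,0)$, whose pullback through $\hat\chi$ is by definition $\mc T_\bla$. Finally $F_3$ depends only on base coordinates, so by Lemma~\ref{lem:slI} (with $\zz$ permuted by $\si$) it is the pullback under $p_T$ of a meromorphic section of a line bundle $\LL_{\si\<,\>I}$ on $\hE_T(\on{pt})$. Assembling the three factors proves the first claim.

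For $\si=\id$, Lemma~\ref{lem:slI} directly identifies $\LL_I$ with $\LL(M_I,0)$, so the first part of the \emph{moreover} statement is immediate. To establish the holomorphy of $c^*\WT^{\>\on{ell}}_I$ as a section of $p_T^*\LL_I(D_I)\otimes\mc T_\bla$, I would inspect the restriction to each component $Y_J=\iota_J\hE_T(\on{pt})$, $J\in\Il$, and trace the three possible sources of poles in $\Wt^{\>\on{ell}}_I$: (a)~the denominators $\thi(t^{(k)}_b/t^{(k)}_a)$ inside $U^{\>\on{ell}}_I$, whose singularities cancel after symmetrization by the Remark following \Ref{psI}; (b)~the factor $E^{\>\on{ell}}_\bla(\ttt,h)$ in the denominator of $\Wt$, whose zeros along the $q$-translated loci $t^{(k)}_b=h^{-1}t^{(k)}_a$ are built into the quadratic form $M_\bla$ of $\mc T_\bla$ and, upon restriction $\ttt=\zz_J$, are cancelled by corresponding theta zeros of the numerator $W^{\>\on{ell}}_I(\zz_J,\zz,h,\mub)$ as in the proof of Lemmas~\ref{lem:tria}--\ref{lem:div_e}; and (c)~the $(h,\mub)$-dependent denominators collected in $\psi_I(h,\mub)$ from \Ref{psI}, whose zero locus is by definition the divisor $D_I$, producing precisely the asserted twist $\LL_I(D_I)$. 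On $Y_J$ with $J\not\leq_{\:\id}\<I$ the whole symmetrization vanishes by Lemma~\ref{lem:tria} and there is nothing to check; on $Y_J$ with $J\leq_{\:\id}\<I$ the remaining theta factors are evaluated at nonconstant monomials and are therefore holomorphic.

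The main obstacle is the careful cancellation bookkeeping in source~(b): one must verify that every potential pole of $1/E^{\>\on{ell}}_\bla$ on $\ttt=\zz_J$, beyond what is already absorbed by $\mc T_\bla$, is compensated by a theta zero inherited from the substitution structure of $W^{\>\on{ell}}_I$. This is the elliptic counterpart of the polynomiality statement of Lemma~\ref{lem:WK divisible by E}, and the argument should parallel Lemmas~\ref{lem:tria}--\ref{lem:div_e} of Section~\ref{sec:e-sub}, replacing binomial zeros $1-x$ by the theta zeros $\thi(x)$ at the $q^\Z$-translates. A second routine but nontrivial check is that the local sections on distinct $Y_J$'s agree on the intersections $Y_J\cap Y_{J'}$ along the diagonals $x_a=x_b$ in the sense of Definition~\ref{def-mero}; this follows from the fact that $c^*\WT^{\>\on{ell}}_I$ is a single meromorphic object on the ambient product of elliptic curves and the restrictions to $Y_J$'s are induced by evaluation, which commutes with the descent.
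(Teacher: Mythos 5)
Your proposal is correct and follows essentially the same route as the paper: the factorization $F_1F_2F_3$ is exactly the paper's argument, since the paper's product $\hat G_I\,\hat G_\bla/\Eh^{\>\on{ell}}_\bla$ is your $F_2F_3$ (a section of $\LL(M_I+M_\bla,0)$ lifting to $p_T^*\LL_I\otimes\mathcal T_\bla$), the ellipticity of $F_1=W^{\>\on{ell}}/G_{\bla,I}$ is Lemma~\ref{lem:tr}, and the holomorphy up to $D_I$ rests on Lemma~\ref{lem:div_e} and the explicit formula, just as in the paper. The only cosmetic difference is that the paper first reduces to $\si=\id$ while you carry the permuted variables $\zz_\si$ through directly.
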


\begin{proof} The proof is similar to the proof of \cite[Proposition 5.9]{FRV}. It is enough to prove the statement for
$\si=\on{id}$.

The function $\hat G_I(\xx,y,\nub)\hat G_\bla(\bs v,y,\nub)/\Eh^{\>\on{ell}}_\bla(\bs v, y)$ defines a meromorphic section
of the line bundle $\mc L(M_I+M_\bla,0)$ on
$E^n\times E^{\la_1}\times\dots\times E^{\la_N} \times E\times E^{N-1}$. That line bundle lifts to the line bundle
$p_T^*\LL_I\otimes \mathcal T_\bla$ on $\Eh_T(\tfl)$.
Lemmas \ref{lem:tr} and \ref{lem:div_e} imply that $c^*\WT^{\>\on{ell}}_I$ is a meromorphic section of
the admissible line bundle $p_T^*\LL_I\otimes \mathcal T_\bla$ on $\Eh_T(\tfl)$.
The fact that $c^*\WT^{\>\on{ell}}_I$ is a holomorphic section of
the line bundle $p_T^*\LL_I(D_I)\otimes \mathcal T_\bla$ follows from the formula for the weight functions.
\end{proof}

\begin{cor}
\label{cor:W-coh}
For any $\si\in S_n$ and $I\in\Il$ the restriction
$c^*\WT^{\>\on{ell}}_{\si\<,\>I}$ defines a $T$-equivariant elliptic cohomology class on $\tfl$.
\end{cor}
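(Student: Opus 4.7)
\medskip

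\noindent\textbf{Proof plan.} The goal is to exhibit, for each pair $(\si,I)$, a line bundle $\LL_{\si\<,\>I}$ on $\Eh_T(\mathrm{pt})$ such that $c^*\WT^{\>\on{ell}}_{\si\<,\>I}$ is a \emph{holomorphic} section of the admissible line bundle $p_T^*\LL_{\si\<,\>I}\otimes \mathcal T_\bla$ on $\Eh_T(\tfl)$, in the sense of Definition~\ref{def-mero} and Section~\ref{sec:ellHclasses}. The strategy is to reduce everything to the identity permutation case, which is already treated by Proposition~\ref{prop-8}.

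For $\si=\id$ the statement is immediate: Proposition~\ref{prop-8} gives that $c^*\WT^{\>\on{ell}}_I$ is a holomorphic section of $p_T^*\LL_I(D_I)\otimes\mathcal T_\bla$, so one simply sets $\LL_{\id\<,\>I}:=\LL_I(D_I)$. Here the divisor $D_I$ absorbs precisely the dynamical poles coming from $1/\psih_I(y,\nub)$ (equivalently, the denominators in $\psi_I(h,\mub)$), while the factor $E^{\>\on{ell}}_\bla$ in the definition $\Wt^{\>\on{ell}}=W^{\>\on{ell}}/E^{\>\on{ell}}_\bla$ cancels the equivariant poles on each component $Y_J$, whose order is controlled by Lemma~\ref{lem:div_e}.

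For general $\si\in S_n$, I would invoke the exchange identity \Ref{Wsi}, namely $W^{\>\on{ell}}_{\si\<,\>I}(\ttt,\zz,h,\mub)=W^{\>\on{ell}}_{\si^{-1}(I)}(\ttt,z_{\si(1)},\ldots,z_{\si(n)},h,\mub)$. The permutation $\si$ acts on the first factor $E^n$ of the fiber product \Ref{ghp} by permuting coordinates, sends the component $Y_J$ to $Y_{\si(J)}$, and acts trivially on the remaining factors $E\times E^{N-1}$; this lifts to an automorphism $\Tilde\si$ of $\Eh_T(\tfl)$. By Proposition~\ref{p-li}(iv), pullback under $\Tilde\si$ carries admissible line bundles to admissible line bundles, with the quadratic form modified by conjugation by the corresponding permutation matrix acting on the $\bs v$-block. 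Formula \Ref{Wsi} is precisely the identity $c^*\WT^{\>\on{ell}}_{\si\<,\>I}=\Tilde\si^*\bigl(c^*\WT^{\>\on{ell}}_{\id\<,\>\si^{-1}(I)}\bigr)$, so holomorphy in the $\si=\id$ case transports to the general case, yielding $\LL_{\si\<,\>I}:=\Tilde\si^*\LL_{\id\<,\>\si^{-1}(I)}$ (identified with a line bundle on $\Eh_T(\mathrm{pt})$ via the action of $\si$ on its factors).

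The main obstacle is not the permutation step, which is essentially formal, but the holomorphy verification in the $\si=\id$ case packaged inside Proposition~\ref{prop-8}. There one must simultaneously track: the contribution of the symmetrization in \Ref{WI}, the substitution statements in Lemmas~\ref{lem:tria}--\ref{lem:div_e}, and the quadratic forms $M_\bla$ and $M_I$ from \Ref{mL} and \Ref{mI}; and then check that the poles that appear on each component $Y_J$ are exactly the ones absorbed by $D_I$, with the gluing condition of Definition~\ref{def-mero} on intersections $Y_J\cap Y_{J'}$ being automatic from the fiber-product description \Ref{ghp}. Once this is established for $\si=\id$, the corollary is a short formal consequence of the exchange identity \Ref{Wsi} and Proposition~\ref{p-li}(iv).
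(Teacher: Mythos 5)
Your proposal is correct and follows essentially the same route as the paper: the corollary is an immediate consequence of Proposition~\ref{prop-8}, whose proof already contains the reduction ``it is enough to prove the statement for $\si=\on{id}$'' and establishes holomorphy of $c^*\WT^{\>\on{ell}}_I$ as a section of $p_T^*\LL_I(D_I)\otimes\mathcal T_\bla$ via Lemmas~\ref{lem:tr} and~\ref{lem:div_e}. Your explicit transport of the $\si=\id$ case by the permutation automorphism of the $E^n$ factor, using \Ref{Wsi} and Proposition~\ref{p-li}(iv), is just a spelled-out version of that same reduction, so no new idea is needed beyond minor bookkeeping of which component $Y_J$ maps to which.
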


\subsection{Elliptic stable envelope for cotangent bundles of partial flag varieties}

In this section we introduce a version of elliptic stable envelopes.
Our elliptic stable envelopes are defined in terms of the elliptic weight
functions. In Theorem \ref{thm:ax} we give their axiomatic definition in the spirit
of \cite{MO1}, \cite{FRV}. {\it It would be interesting to understand the
relation of our definition with the one sketched in \cite{AO}.}

For $\si\in S_N$, $I\in \Il$ we call the $T$-equivariant elliptic cohomology class
$c^*\WT^{\>\on{ell}}_{\si\<,\>I}$ the {\it
stable envelope associated with $\si$ and the fixed point} $x_I\in\tfl$.

Consider $\C^N$ with basis $v_1,\dots,v_N$. The standard basis of $(\C^N)^{\ox n}$ is formed by the vectors
$v_I$ labeled by partitions $I=(I_1,\dots,I_N)$ of $\{1,\dots, n\}$,
$v_I=v_{i_1}\otimes \dots\ox v_{i_n}$,
where $i_a=j$ if $a\in I_j$.

For $\id\in S_n$ we may consider the map
\beq
\label{staB}
\Stab_\id : (\C^N)^{\ox n} \to \oplus_{\bla\in (\Z_{\geq 0})^N,\, |\bla|=n}\oplus_{I\in\Il} H^{\>\on{ell}}_T(\tfl)_{\mc L_I(D_I)},
\eeq
sending $v_I$ to the cohomology class $c^*\WT^{\>\on{ell}}_I$.

Similarly we may define the maps $\Stab_\si$ for all $\si\in S_n$. The maps
$\Stab_\si, \Stab_{\si'}$ are related
by the elliptic dynamical $R$-matrix, see Theorem~\ref{thm:recur}, cf.~\cite[Theorem 7.1]{RTV2}
and Section~\ref{NoR}.

\begin{rem}
For $\si\in S_n$ the collection $(c^*\WT^{\>\on{ell}}_{\si\<,\>I})_{I\in\Il}$ forms a basis of the $T$-equivariant
cohomology of $\tfl$ in the sense of \cite[Theorem 5.23]{FRV}.
We will discuss this fact in the next paper.
\end{rem}

\begin{rem}
Using the map $\Stab_\id$ one can construct an action of the dynamical elliptic quantum groups associated with
$\frak{gl}_N$ on the extended equivariant cohomology
\\
$\sqcup_{\bla\in \Z_{\geq 0}^N,\, |\bla|=n}\hE_T(\tfl)$.
The action is by $S_n$-equivariant admissible difference operators acting on sections of admissible line bundles, see the case
$N=2$ in \cite{FRV}. See similar constructions for equivariant cohomology and $K\<$-theory in \cite{GRTV, RTV2,RV1}.
We plan to discuss this action in the next paper.
\end{rem}

\begin{rem}
The equivariant elliptic cohomology class $c^*\WT^{\on{ell}}_I$ has analogs in equivariant cohomology
and equivariant $K\<$-theory of $\tfl$, see \cite{GRTV,RTV1,RTV2}.
The analog of $c^*\WT^{\on{ell}}_I$ in equivariant cohomology is
the equivariant Chern--Schwartz--MacPherson class (or characteristic cycle)
of the open Schubert variety $\Om_I$, see \cite{RV2}.
Hence $c^*\WT^{\on{ell}}_I$ may be viewed as an equivariant elliptic version
of the Chern--Schwartz--MacPherson class.
\end{rem}

\subsection{Axiomatic definition of the stable envelope}

\begin{thm}
\label{thm:ax}
For any $I$ the $T$-equivariant elliptic cohomology class $c^*\Wt^{\>\on{ell}}_I$ satisfies the following properties.
\begin{itemize}
\item[(i)] It is a meromorphic section of the admissible line bundle $p_T^*\LL(N_I,0)\otimes \mathcal T_{\bla}$
for a suitable quadratic form $N_I$.

\item[(ii)] The restriction of $c^*\Wt^{\>\on{ell}}_I$ to the component $Y_I$, written as a function $\C^{n+1+N-1}\to \C$
with transformation properties determined by the line bundle $p_T^*\LL(N_I,0)\otimes \mathcal T_{\bla}$,
equals
\beq
\label{Pie}
\Phh^{\>\on{ell}}_I(\xx,y)\,=
\,\prod_{k<l}\,\prod_{a\in I_k}\!
\biggl(\>\prod_{\satop{b\in I_l}{b<a}}\!\theta(y+x_b-x_a)\!
\prod_{\satop{b\in I_l}{b>a}}\!\theta(x_b-x_a)\biggr).
\eeq

\item[(iii)] The restriction of $c^*\Wt^{\>\on{ell}}_I$ to any component $Y_J$, written as a function $\C^{n+1+N-1}\to \C$ with transformation properties determined by $p_T^*\LL(N_I,0)\otimes \mathcal T_{\bla}$, is of the form
\beq\label{eqn:horizontal}
\frac 1{\psih_I(y,\nub)}
\prod_{k<l}\prod_{a\in J_k}\prod_{\satop{b\in J_l}{b<a}}
\theta(y+x_b-x_a) \cdot F_{I\<,\:J},
\eeq
where $F_{I\<,\:J}$ is a {\em holomorphic} function.
\end{itemize}
Moreover, these three properties uniquely determine the $T$-equivariant elliptic cohomology class $c^*\Wt^{\>\on{ell}}_I$.
\end{thm}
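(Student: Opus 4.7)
The plan splits into \emph{existence}, which I would derive by translating the already-established lemmas on $\Wt^{\on{ell}}_{\id,I}$ through the substitution \eqref{subs}, and \emph{uniqueness}, which I would obtain by a Riemann--Roch count on each fixed-point component combined with the GKM-type gluing built into Definition \ref{def-mero}.

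\emph{Existence.} Property (i) is essentially Proposition \ref{prop-8} at $\si=\id$: the proposition identifies $c^*\WT^{\on{ell}}_I$ as a meromorphic section of $p_T^*\LL_I \otimes \mathcal T_\bla$, and dividing by the symmetric factor $\Eh^{\on{ell}}_\bla$ to pass from $W$ to $\Wt$ modifies the underlying quadratic form in a bookkeeping way, yielding the advertised $N_I$. For property (ii), Lemma \ref{lem:expec} gives $\Wt^{\on{ell}}_{\id,I}(\zz_I,\zz,h,\mub) = P^{\on{ell}}_{\id,I}(\zz,h)$; under the substitution \eqref{subs} each factor $\thi(h z_b/z_a)$ becomes $\theta(y+x_b-x_a)$ and each $\thi(z_b/z_a)$ becomes $\theta(x_b-x_a)$, recovering the formula \eqref{Pie} on the nose. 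For property (iii), Lemma \ref{lem:div_e} asserts that $\psi_I(h,\mub)\,\Wt^{\on{ell}}_{\id,I}(\zz_J,\zz,h,\mub)/e^{\on{ell},\vert}_{\id,J,-}(\zz,h)$ lifts to a regular function; after \eqref{subs}, $\psi_I$ becomes $\psih_I(y,\nub)$ and $e^{\on{ell},\vert}_{\id,J,-}$ becomes precisely the product of $\theta(y+x_b-x_a)$ appearing in \eqref{eqn:horizontal}, so the residual factor is the holomorphic $F_{I,J}$.

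\emph{Uniqueness.} Let $s, s'$ be two classes satisfying (i)--(iii) and set $\delta=s-s'$. Then $\delta$ is a section of the same admissible line bundle whose restriction to $Y_I$ vanishes by (ii), and whose restriction to every $Y_J$ has the form \eqref{eqn:horizontal} with some holomorphic $G_J$ in place of $F_{I,J}$. My strategy is to show that the specified divisor constraints together with the transformation type force $G_J$ to live in a one-dimensional space, already fixed to zero at $J=I$, and then to propagate $G_J=0$ across all components using the gluing along the diagonals $x_a=x_b$ where adjacent $Y_J$ meet. Concretely, I would order $\Il$ by a linear refinement of $\leq_{\id}$ and induct, using at each step that a section vanishing on the previously handled components and having the restricted form \eqref{eqn:horizontal} on the current $Y_J$ must vanish there too, in the spirit of the uniqueness arguments in \cite[Sect.~5]{FRV} and \cite[Sect.~3]{RTV2}.

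\emph{Main obstacle.} The core technical difficulty is the Riemann--Roch bookkeeping: one must verify that the quadratic form $N_I$ in (i) has been calibrated so that, after restriction to $Y_J\cong E^{n+1+N-1}$ and twist by the zero divisor $\prod\theta(y+x_b-x_a)$ and pole divisor from $\psih_I^{-1}$, the resulting line bundle has Euler characteristic exactly $1$. Only then does the residue $G_J$ live in a one-parameter family, so that the gluing compatibilities of Definition \ref{def-mero} across the $(a,b)$-diagonals suffice to close the induction. This verification is a direct but delicate computation with the quadratic forms $M_\bla$ and $M_I$ introduced in Lemmas \ref{lem:sl} and \ref{lem:slI}; once it is in place, the rest of the uniqueness argument is formal.
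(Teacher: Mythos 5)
Your existence argument coincides with the paper's: properties (i)--(iii) are exactly the translations of Proposition \ref{prop-8} and Lemmas \ref{lem:expec}, \ref{lem:div_e} through the substitution \Ref{subs}, so that half is fine.

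The uniqueness half has a genuine gap, and it sits precisely in the step you defer to ``Riemann--Roch bookkeeping.'' Your plan is to show that the residual holomorphic function $G_J$ on a component $Y_J$ lives in a \emph{one}-dimensional space and then to pin it to zero by the gluing along the diagonals $x_a=x_b$. Neither half of this works as stated. First, the gluing data cannot supply the missing normalization: axiom (ii) fixes the restriction only on $Y_I$, and for $J\ne I$ there is no evaluation condition left on $Y_J$ once the gluing has been used, so a genuinely one-parameter residual family could not be killed. What the gluing (together with choosing $J$ \emph{maximal}, for a total order refining the partial order, among components where the difference $\kappa_I$ does not vanish) actually gives is divisibility of $\kappa_I|_{Y_J}$ by the ``horizontal'' factors $\prod\theta(x_b-x_a)$, in addition to the ``vertical'' factors $\prod\theta(y+x_b-x_a)$ coming from (iii); this maximality step is absent from your outline, and without it you only control the vertical divisor. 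Second, after dividing by this full product (which by (ii) is $c^*\Wt^{\>\on{ell}}_J|_{Y_J}$) the residue $F_1/\psih_I$ is a meromorphic section of $\mc L(M_I-M_J,0)$, and the correct count is that its space of admissible sections is \emph{zero}, not one-dimensional: picking $a\in I_k\cap J_l$ with $k\ne l$ (possible since $J\ne I$), the form $M_I-M_J$ contains no $x_a^2$ term but has a linear term in $x_a$ whose coefficient involves the dynamical parameters $\nub$ nontrivially, so the factor of automorphy in $x_a$ is that of a degree-zero but (for generic $\nub$) nontrivial bundle on $E$; expanding $f(x_a)=\sum_m a_m e^{2\pi i m x_a}$ and comparing with $f(x_a+\tau)=e^{-2\pi i(2(\nu_l+\dots+\nu_{N-1}-\nu_k-\dots-\nu_{N-1})+\dots)}f(x_a)$ forces $a_m=0$ for all $m$. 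This quasi-periodicity (Fourier) argument is the engine of the paper's uniqueness proof; your Euler-characteristic-one calibration would have to be replaced by it, since as calibrated your induction step cannot close.
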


\begin{proof}
The proof is analogous to the proof of \cite[Theorem A.1]{FRV}.

Properties (i-iii) of $c^*\Wt^{\>\on{ell}}_I$ follow from Lemmas
\ref{lem:expec} and \ref{lem:div_e}. Now we prove that properties (i-iii) uniquely determine $c^*\Wt^{\>\on{ell}}_I$. Let $s$ be
a section with properties (i-iii) for some admissible line bundle $p_T^*\LL(N_I,0)\otimes \mathcal T_{\bla}$.
Property (ii) implies that $N_I=M_I$, where $M_I$ is given by \Ref{mI}. Hence $\LL(N_I,0)=\mc L_I$.

Denote by $\ka_I$ the difference between $s$ and
$c^*\Wt^{\>\on{ell}}_I$. Assume that the difference is nonzero.
Then there exists a $J$ such that $\kappa_I$ restricted to $Y_J$ is not 0. For a total ordering $\prec$ refining the partial order $<$ on $\Il$ let us choose $J$ to be the largest index with the property $\kappa_I|_{Y_J}\ne0$. We have $J\ne I$ because of the second property.

We claim that $\kappa_I|_{Y_J}$, written as a function $\C^{n+1+N-1}\to \C$, with transformation properties determined by $p_T^*\LL_I\otimes \mathcal T_\bla$, is of the form
\beq\label{eqn:div1}
\frac 1{\psih_I(y,\nub)}
\prod_{k<l}\,\prod_{a\in J_k}\!
\biggl(\>\prod_{\satop{b\in J_l}{b<a}}\!\theta(y+x_b-x_a)\!
\prod_{\satop{b\in J_l}{b>a}}\!\theta(x_b-x_a)\biggr)
\cdot
F_1,
\eeq
where $F_1$ is holomorphic. The presence of the first theta-factors
$\theta(y+x_b-x_a)$ in \Ref{eqn:div1} follows from property (iii).
The presence of the second theta-factors $\theta(x_b-x_a)$
in \Ref{eqn:div1} follows from the fact that $J$ is the largest index
such that $\kappa_I|_{Y_J}\ne0$, cf.~the proof of \cite[Theorem A.1]{FRV}.

Observe that the product of theta functions in (\ref{eqn:div1}) equals
$c^*\Wt^{\>\on{ell}}_{J}|_J$ by property (ii). Hence $F_1/\psih_I$
is a meromorphic section of the line bundle $\mc L(M_I-M_J,0)$.

Let $a\in \{1,\dots,n\}$ and $k,l\in\{1,\dots,N\}$, $k\ne l$, be such that $a\in I_k\cap J_l$. Consider
$F_1/ \psih_I$ as a function of $x_a$. Denote it by $f(x_a)$. The function $f(x_a)$ is a holomorphic function of $x_a$ for
generic fixed other arguments. Comparing the $x_a$-dependence of the quadratic form $M_I-M_J$
we obtain that
\beq\label{eqn:qq}
f(x_a+\tau)=e^{-2\pi i (2(\nu_l+\dots+\nu_{N-1}-\nu_k-\dots-\nu_{N-1})+...)} f(x_a), \qquad f(x_a+1)=f(x_a),
\eeq
where the dots indicate the terms independent on $x_a$ and $\nub$. Using the 1-periodicity, we expand
$f(x_a)=\sum_{m\in \mathbb Z} a_m e^{2\pi i m x_a}$,
and using the first transformation property of (\ref{eqn:qq}) we obtain that $a_m=0$ for all $m\in \mathbb Z$. Hence
$F_1=0$, and in turn, $\kappa_I|_{Y_J}=0$. This is a contradiction proving that $\kappa_I$ is 0 on all $Y_J$.
\end{proof}

\begin{rem}
The classes appearing in the second and third properties
in Theorem \ref{thm:ax}
can be identified as horizontal and vertical parts of the equivariant elliptic normal Euler classes of $C\Om_{\si\<,\>I}$ near the fixed point $x_I$, cf.~Section \ref{sec:edefs}.
\end{rem}

\begin{rem}
The stable envelopes are defined in equivariant cohomology, equivariant $K\<$-theory, and equivariant elliptic cohomology, see \cite{MO1,MO2}, for the cotangent bundles of partial flag varieties the definitions are discussed in \cite{RTV1, RTV2, FRV} and in
Sections \ref{sec:5}, \ref{sec7}.
In all three cases the definition consists of three axioms. One axiom says
that the restriction $\ka_J|_{x_J}$ of the stable envelope $\ka_J$
to the fixed
point $x_J$ equals some ``expected'' product. Another axiom says that the restriction
$\ka_J|_{x_I}$ of $\ka_J$
to any fixed point $x_I$ should be divisible by some product determined by the point $x_I$.
This is a support type axiom, see Section \ref{sec:5}.
The last axiom says that the restriction $\ka_J|_{x_I}$ should be ``smaller'' than the restriction $\ka_I|_{x_I}$.
The products in the first two axioms are the products of linear function or trigonometric functions or theta functions with the same arguments when we change from equivariant cohomology to equivariant $K\<$-theory and then to equivariant elliptic cohomology.
It is interesting to see how the notion of ``smallness'' is changing in these three cases.
In the cohomology case the restrictions are polynomials and one polynomial is smaller than another if the degree of the first is smaller than the degree of the second. In the $K\<$-theory case the restrictions are Laurent polynomials and one Laurent polynomial is smaller than another
if the Newton polytope of the first can be parallelly moved inside the Newton polytope of the second. In the elliptic case the smallness is the requirement to be a section of an admissible line bundle, in other words, to be an equivariant elliptic cohomology class, see condition (i) in Theorem \ref{thm:ax}. In other words, the most nontrivial condition of smallness just dissolves in the definition of a cohomology class.

\end{rem}

\section{Appendix: Comparison of Bethe algebras}

The Faddeev-Takhtajan-Reshetikhin formalism applied to the $R$-matrix $R(z,h)$ given by \Ref{RO}
produces the quantum loop algebra $\Uen$, the evaluation $N$-dimensional representations of $\Uen$,
and commutative Bethe subalgebras
$\Bck$ of $\Uen$ depending on complex parameters $q=(q_1,\dots,q_N)$ called the quantum parameters,
see explicitly these constructions in
\cite[Sections 10 and 11]{RTV2}. The Bethe algebra acts on the tensor product of $n$ evaluation representations and preserves
the weight decomposition
$(\C^N)^{\otimes n}=\oplus_{\bla\in\Z^N_{\geq 0},\,|\bla|=n} (\C^N)^{\otimes n}_\bla$.
The image of the Bethe algebra in the algebra of endomorphisms of a weight subspace $(\C^N)^{\otimes n}_\bla$
is described
in \cite[Theorem 13.3]{RTV2} by generators and relations
in terms of a discrete Wronski map depending on parameters $q$.

Let $\Rc(z,h)$ be defined by \Ref{RN} for the anti-dominant alcove $\Delta$,
i.e.~with all $m_{j,k}$ equal to zero. The matrix $\Rc(z,h)$ differs from
the matrix $R(z,h)$ by change in the normalization convention involving $h$.
The Faddeev-Takhtajan-Reshetikhin formalism applied to $\Rc(z,h)$
produces the new quantum loop algebra $\Uen'$
isomorphic to $\Uen$, new evaluation representations,
and new commutative Bethe subalgebras $\Bckp\!\subset \Uen'$.
It turns out that {\it the image of the new Bethe algebra $\Bckp$ with parameters $q'=(q'_1,\dots,q'_N)$
in the algebra of endomorphisms of $(\C^N)^{\otimes n}_\bla$
coincides with the image of the original Bethe algebra $\Bck$ with quantum parameters
$q=(q_1,\dots,q_N)$ if}
\be
q_k'=\>h^{-\la_1-\dots-\la_{k-1}+\la_{k+1}+\dots+\la_N}q_k,
\qquad k=1,\dots,N.
\ee
Hence the image of the new Bethe algebra is also described by the discrete Wronski map.
We will discuss this fact in detail in the next paper.

The importance of that statement lies in the following. In \cite{OS} Okounkov
and Smirnov consider the equivariant $K\<$-theory algebra
of a Nakajima variety and the stable envelopes of the anti-dominant alcove.
Using the $R$-matrices of that alcove they define the associated
quantum loop algebra with the action on the equivariant $K\<$-theory algebra.
Conjecturally, the associated Bethe algebra of that action is the algebra of quantum multiplication on the equivariant
$K\<$-theory algebra, cf. \cite{OS,RTV2,PSZ}. For the cotangent bundle $\tfl$ of a partial flag variety $\F_\bla$
the constructions of this paper and \cite{OS,RTV2}
identify the Bethe algebra of the cotangent bundle with the image of the Bethe algebra $\Bckp$ in the endomorphism algebra of $(\C^N)^{\otimes n}_\bla$. Hence the formulated statement gives a conjectural description
of the algebra of quantum multiplication on the equivariant $K\<$-theory algebra of $\tfl$
by generators and relations in terms of the discrete Wronski map.


\begin{thebibliography}{[GRTV]}
\normalsize
\frenchspacing
\raggedbottom

\bi[AO]{AO}
M.\,Aganagic, A.\,Okounkov, {\it Elliptic stable envelopes\/},
Preprint 2016,\\ {\sf arXiv:1604.00423}

\bi[ChG]{ChG} N.\,Chriss, V\<.\,Ginzburg, {\it
Representation theory and complex geometry\/}, Modern Birk\-h\"auser Classics,
Birkh\"auser Inc., Boston, MA, 2010

\bi[F]{F}
G.\,Felder,
{\it Elliptic quantum groups\/}, in Proceedings of the ICMP, Paris 1994\\
(D.\,Iagolnitzer, ed.)\:, Intern. Press, Cambridge, MA, 1995 pp.\;211--218

\bi[FR]{FR}
L.\,Feher, R.\,Rim\'anyi, {\it Calculation of Thom polynomials and other
cohomological obstructions for group actions},
Real and Complex Singularities (Sao Carlos, 2002), Ed. T.Gaffney and M.Ruas,
Contemp. Math. {\bf 354}, Amer. Math. Soc., Providence, RI, June 2004,
pp. 69--93.

\bi[FRV]{FRV}
G.\,Felder, R.\,Rim\'anyi, A.\:Varchenko,
{\it Elliptic dynamical quantum groups and equivariant elliptic cohomology},
Preprint 2017, {\sf arXiv:1702.08060}

\bi[FTV1]{FTV1} G.\,Felder, V\<.\,Tarasov, A.\:Varchenko, {\it
Solutions of the elliptic {\sl QKZB\/} equations and Bethe ansatz I},
in Topics in Singularity Theory, V.I.Arnold's
60th Anniversary Collection, Advances in the Mathematical Sciences,
AMS Translations, Series 2, v.180, 45--76, 1997

\bi[FTV2]{FTV2}
G.\,Felder, V\<.\,Tarasov, A.\:Varchenko, {\it Monodromy of solutions of
the elliptic quantum Knizhnik-Zamolodchikov-Bernard difference equations},
Int. J. Math. 10 (1999) 943--975

\bi[FP]{FP} W.\,Fulton, P.\,Pragacz,
{\it Schubert Varieties and Degeneracy Loci\/},
{\it Appendix~J by the authors in collaboration with I.\,Ciocan-Fontanine\/},
Lect. Notes in Math {\bf 1689} (1998), 1--148

\bi[Ga]{Ga} N.\>Ganter, {\it The elliptic Weyl character formula}, 
Compos. Math. 150 (2014), no. 7, 1196--1234

\bi[GKV]{GKV} V\<.\,Ginzburg, M.\,Kapranov, E.\>Vasserot,
{\it Elliptic Algebras and Equivariant Elliptic Cohomology I},
{\sf arXiv:q-alg/9505012}

\bi[GRTV]{GRTV} V\<.\,Gorbounov, R.\,Rim\'anyi, V\<.\,Tarasov, A.\:Varchenko,
{\it Cohomology of the cotangent bundle of a flag variety as a Yangian Bethe
algebra\/} J.\;Geom. Phys. {\bf 74} (2013), 56--86

\bi[GKM]{GKM} M.\,Goresky, R.\,Kottwitz, R.\,MacPherson,
{\it Equivariant cohomology, Koszul
duality, and the localization theorem}, Invent. Math., 131(1)(1998),25--83

\bi[Gr]{Groj} I.\,Grojnowski, {\it Delocalized equivariant
elliptic cohomology}, (preprint 1994), in {\it Elliptic cohomology},
114--121, London Math. Soc. Lecture Note Ser., 342, Cambridge
Univ. Press, Cambridge, 2007.

\bi[MO1]{MO1}
D.\,Maulik, A.\,Okounkov, {\it Quantum Groups and Quantum Cohomology\/},
Preprint (2012), 1--276, {\sf arXiv:1211.1287}

\bi[MO2]{MO2} D.\,Maulik, A.\,Okounkov, in preparation; 2015

\bi[MTV]{MTV}
E.\,Mukhin, V\<.\,Tarasov, A.\:Varchenko,
{\it Bethe eigenvectors of higher transfer matrices\/},
J.~Stat. Mech. (2006), no.\;8, P08002, 1--44

\bi[Mu]{Mu} D.\,Mumford, {\it Abelian varieties}, Tata Institute of Fundamental
Research Studies in Mathematics, No.\;5, Oxford University Press, London 1970

\bi[O]{O} A.\,Okounkov. Lectures on $K\<$-theoretic computations in enumerative
geometry. ArXiv: 1512.07363.

\bi[OS]{OS} A.\,Okounkov, A.\,Smirnov. Quantum difference equation for Nakajima
varieties, Preprint 2016, {\sf arXiv:1602.09007}

\bi[PSZ]{PSZ} P\<.\,P\<.\,Pushkar, A.\,Smirnov, A.\,M.\,Zeitlin,
{\it Baxter $Q$-operator from quantum $K\<$-theory}, Preprint 2016,
{\sf arXiv:1612.08723}

\bi[RTV1]{RTV1} R.\,Rim\'anyi, V\<.\,Tarasov, A.\:Varchenko,
{\it Partial flag varieties, stable envelopes and weight functions\/},
Quantum Topol. 6 (2015), no.\;2, 333--364, {\sf DOI 10.4171/QT/64};
{\sf arXiv:1212.6240} 

\bi[RTV2] {RTV2} R.\,Rimanyi, V\<.\,Tarasov, A.\:Varchenko,
{\it Trigonometric weight functions as $K\<$-the\-o\-retic
stable envelope maps for the cotangent bundle of a flag variety\/},
J.\;Geom. Phys. {\bf 94} (2015), 81--119,
{\sf doi:10.1016/j.geomphys.2015.04.002}; {\sf arXiv:1411.0478}

\bi[RV1]{RV1}
R.\,Rimanyi, A.\,Varchenko,
{\it Dynamical Gelfand-Zetlin algebra and equivariant cohomology of
Grassmannians}, Journal of Knot Theory and Its Ramifications {\bf 25},
no.\;12 (2016) (29 pages), {\sf DOI: 10.1142/S021821651642013X};
{\sf arXiv:1510.03625}

\bibitem[RV2]{RV2}
R.\,Rimanyi, A.\,Varchenko, {\it Equivariant Chern-Schwartz-MacPherson classes
in partial flag varieties: interpolation and formulae}, {\sf arXiv:1509.09315},
to appear in IMPANGA2015, 9pp

\bi[RoKu]{RoKu} I.\,Rosu, {\it Equivariant K\<-theory and equivariant
cohomology, with an appendix by A.\,Knutson and I.\,Rosu},
Math.~Z. {\bf 243} (2003), no.\;3, 423--448

\bi[SV]{SV} V\<.\,Schechtman, A.\:Varchenko, {\it
Arrangements of Hyperplanes and Lie Algebra Homology},
Invent. Math. Vol. 106 (1991), 139--194

\bi[TV1]{TV1}
V\<.\,Tarasov, A.\:Varchenko, {\it Jackson integral representations for
solutions of the Knizhnik-Zamolodchikov quantum equation},
Leningrad Math.~J. {\bf 6} (1994), 275--313

\bi[TV2]{TV2}
V\<.\,Tarasov, A.\:Varchenko, {\it Geometry of $q$-hypergeometric functions
as a bridge between Yangians and quantum affine algebras},
Invent. Math. {\bf 128} (1997), no.\;3, 501--588

\bi[TV3]{TV3}
V\<.\,Tarasov, A.\:Varchenko, {\it Geometry of q-hypergeometric functions,
quantum affine algebras and elliptic quantum groups\/},
Asterisque {\bf 246} (1997), 1--135

\bi[TV4]{TV4}
V\<.\,Tarasov, A.\:Varchenko,
{\it Combinatorial formulae for nested Bethe vectors\/},
SIGMA {\bf 9} (2013), 048, 1--28

\bi[V]{V}
A.\:Varchenko, {\it Quantized \KZ/ equations, Quantum YBE, and difference
equations for $q$-Hypergeometric functions},
Comm. Math. Phys. {\bf 162} (1994), 499--528

\end{thebibliography}
\end{document}